\def\squarebox#1{\hbox to #1{\hfill\vbox to #1{\vfill}}}
\newtheorem{Thm}{Theorem}[section]
 \newtheorem{cor}{Corollary}[section]
\newtheorem{Def}{Definition}[section]
\newtheorem{lem}{Lemma}[section]
\numberwithin{equation}{section}
\newcommand{\p}{\partial}
\newcommand{\bel}{\begin{equation} \label}
\newcommand{\ee}{\end{equation}}
\newcommand{\R}{\mathbb{R}}
\newcommand{\N}{\mathbb{N}}
\def\epsilon{\varepsilon}
\def\phi {\varphi}
\providecommand{\abs}[1]{\left\lvert#1\right\rvert}
\providecommand{\norm}[1]{\left\lVert#1\right\rVert}
\numberwithin{equation}{section}
\renewcommand{\leq}{\leqslant}
\renewcommand{\geq}{\geqslant}
\providecommand{\abs}[1]{\left\lvert#1\right\rvert}
\providecommand{\norm}[1]{\left\lVert#1\right\rVert}
\def\beq{\begin{equation}}
\def\eeq{\end{equation}}
\newcommand{\bea}{\begin{eqnarray}}
\newcommand{\eea}{\end{eqnarray}}
\newcommand{\beas}{\begin{eqnarray*}}
\newcommand{\eeas}{\end{eqnarray*}}
\DeclareMathOperator{\supp}{supp}
\DeclareMathOperator{\diam}{diam}
\DeclareMathOperator{\dist}{dist}
\newcommand{\I}{\mathds{1}}
\begin{document}

\title[Application of boundary control method ]{Application of the boundary control method to partial data Borg-Levinson inverse spectral problem}
\author{Y. Kian$^1$}
\address{$^1$Aix Marseille Univ, Universit\'e de Toulon, CNRS, CPT, Marseille, France}

\author{M. Morancey$^2$}
\address{$^2$Aix Marseille Univ, CNRS, Centrale Marseille, I2M, Marseille, France}

\author{L. Oksanen$^3$}
\address{$^3$Department of Mathematics, University College London, London, UK}

\begin{abstract}

We  consider  the multidimensional Borg-Levinson problem of determining a  potential $q$, appearing in the Dirichlet  realization of the  Schr\" odinger operator $A_q=-\Delta+q$ on a bounded domain $\Omega\subset\R^n$, $n\geq2$, from  the boundary spectral data of $A_q$ on an arbitrary portion of $\p\Omega$. More precisely, for $\gamma$ an open and non-empty subset of $\p\Omega$, we consider  the boundary spectral data on $\gamma$ given by  $\mathrm{BSD}(q,\gamma):=\{(\lambda_{k},{\partial_\nu \phi_{k}}_{|\overline{\gamma}}):\ k \geq1\}$, where $\{ \lambda_k:\ k \geq1\}$ is the non-decreasing sequence of eigenvalues of $A_q$,  $\{ \phi_k:\ k \geq1 \}$ an associated Hilbertian basis of eigenfunctions, and $\nu$ is the unit outward normal vector to $\partial\Omega$. We prove that the data $\mathrm{BSD}(q,\gamma)$ uniquely determine a bounded potential $q\in L^\infty(\Omega)$. Previous uniqueness results, with arbitrarily small $\gamma$, assume that $q$ is smooth. 
Our approach is based on the Boundary Control method,
and we give a self-contained presentation of the method, focusing on the analytic rather than geometric aspects of the method. 
% and we prove how one can adapt our results to the recovery of $q$ from measurements on an arbitrary portion of $\p\Omega$ of solutions of the wave equation $\p_t^2u-\Delta u+q(x)u=0$ on $(0,T)\times\Omega$ for $T>0$ sufficiently large.\\

\medskip
\noindent
{\bf  Keywords:} Inverse problems, inverse spectral problem, wave equation, Boundary Control method,  uniqueness,  partial data, unique continuation.

\medskip
\noindent
{\bf Mathematics subject classification 2010 :} 35R30, 35J10, 35L05.
\end{abstract}

\maketitle

\section{Introduction}
\subsection{Statement of the results}
We fix $\Omega$ a $\mathcal C^2$ bounded and connected domain  of $\R^n$, $n\geq2$, and $\gamma$ a non empty open set of $\Gamma=\partial\Omega$. We consider the Schr\"odinger operator $A_q=-\Delta_x+q$ acting on $L^2(\Omega)$ with Dirichlet boundary condition and $q\in L^\infty(\Omega)$ real valued. The spectrum of $A_q$ consists of a non decreasing sequence of eigenvalues $\{\lambda_k:\ k\in\N^*\}$, with $\N^*:=\{1,2,\ldots\}$, to which we associate a Hilbertian basis of eigenfunctions $\{\phi_k:\ k\in\N^*\}$. Then, we introduce the boundary spectral data restricted to the portion $\gamma$ given by 
$$\mathrm{BSD}(q,\gamma):=\left\{(\lambda_k,{\partial_\nu \phi_k}_{|\gamma}):\ k\in\N^*\right\},$$
with $\nu$ the outward unit normal vector to $\Gamma$ and $\p_\nu$ the normal derivative.
The main goal of the present paper is to prove uniqueness in the recovery of $q$ from the data $\mathrm{BSD}(q,\gamma)$.

\begin{Thm}\label{t1} Assume that $\Omega$ is convex, $\gamma \subset \Gamma$ is open and non empty, and $q_j\in L^\infty(\Omega)$, $j=1,2$. Then $\mathrm{BSD}(q_1,\gamma) = \mathrm{BSD}(q_2,\gamma)$
%\begin{alignat}{2}
%\label{t1a} 
%\lambda_{1,k}&=\lambda_{2,k}, \quad &&k \in \N^*,
%\\
%\label{t1b} 
%\p_\nu\phi_{1,k}(x)&=\p_\nu\phi_{2,k}(x),\quad &&x\in\overline{\gamma}, k\in \N^*,
%\end{alignat}
implies $q_1=q_2$.
\end{Thm}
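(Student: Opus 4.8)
The plan is to use the Boundary Control (BC) method to reduce the inverse spectral problem to an inverse problem for the wave equation, and then exploit Tataru's sharp unique continuation theorem. First I would observe that the boundary spectral data $\mathrm{BSD}(q,\gamma)$ determines the restriction to $\gamma$ of the solution of the wave equation $\partial_t^2 u - \Delta u + q u = 0$ in $(0,\infty)\times\Omega$ with Dirichlet data $f$ supported in $(0,\infty)\times\gamma$, in the sense that the Neumann trace $\partial_\nu u|_{(0,\infty)\times\gamma}$ can be computed from the data via the spectral expansion $u(t,x) = \sum_k c_k(t) \phi_k(x)$, where the coefficients $c_k$ solve ODEs driven by $\langle f(t,\cdot), \partial_\nu \phi_k\rangle_{L^2(\gamma)}$. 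Equivalently, $\mathrm{BSD}(q,\gamma)$ determines the response operator (hyperbolic Dirichlet-to-Neumann map) $\Lambda_q^\gamma \colon f \mapsto \partial_\nu u|_{(0,\infty)\times\gamma}$ restricted to Dirichlet data supported on $(0,\infty)\times\gamma$. So assuming $\mathrm{BSD}(q_1,\gamma)=\mathrm{BSD}(q_2,\gamma)$, I get $\Lambda_{q_1}^\gamma = \Lambda_{q_2}^\gamma$ on such data.

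Next, the heart of the BC method: I would show that $\Lambda_q^\gamma$ determines the inner products $\langle u_1(T,\cdot), u_2(T,\cdot)\rangle_{L^2(\Omega)}$ of waves $u_j$ generated by controls $f_j$ supported on $(0,T)\times\gamma$, through a Blagovestchenskii-type identity: an integration-by-parts computation expresses this $L^2$ pairing purely in terms of the boundary data $f_1, f_2$ and $\Lambda_q^\gamma f_1, \Lambda_q^\gamma f_2$ on $(0,2T)\times\gamma$. This identity is the same for $q_1$ and $q_2$ because the potential does not appear explicitly. Then, by Tataru's unique continuation theorem for the wave operator $\partial_t^2 - \Delta + q$, the set of waves $\{u_f(T,\cdot) : f \in C_0^\infty((0,T)\times\gamma)\}$ is dense in $L^2(\Omega_\gamma(T))$, where $\Omega_\gamma(T) = \{x \in \Omega : \dist(x,\gamma) < T\}$ is the domain of influence; convexity of $\Omega$ ensures that for $T$ large enough $\Omega_\gamma(T) = \Omega$ (the geodesics are straight lines, so every point is reached from $\gamma$ within finite time, and no point is shadowed). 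Using the Blagovestchenskii identity together with this controllability/approximation statement, one recovers, for each $q_j$, the full collection of inner products and hence can identify the eigenfunctions and eigenvalues of $A_{q_j}$ up to a common unitary/orthogonal transformation acting trivially on the boundary data; more precisely one reconstructs the wave solutions $u_{q_1,f}(t,\cdot)$ and $u_{q_2,f}(t,\cdot)$ as ``the same abstract vector'' in a Hilbert space built only from the data.

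With this in hand, I would finish by a standard argument: pick $f$ and use that $u_{q_1,f}$ and $u_{q_2,f}$ agree (after the canonical identification) for all times, so they solve wave equations with the same Cauchy data and same boundary data but potentials $q_1$ and $q_2$ respectively; subtracting, $w = u_{q_1,f} - u_{q_2,f}$ solves $\partial_t^2 w - \Delta w + q_1 w = (q_2 - q_1) u_{q_2,f}$ with zero Cauchy and boundary data, and since $w \equiv 0$ we get $(q_1 - q_2) u_{q_2,f} = 0$ in $(0,T)\times\Omega$. Choosing a family of controls $f$ for which the waves $u_{q_2,f}$ span, at some fixed time $t_0$, a set dense enough in $L^2$ (again via Tataru unique continuation, using convexity to fill all of $\Omega$), one concludes $q_1 = q_2$ a.e.

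The main obstacle, and where the ``self-contained analytic presentation'' promised in the abstract must do its work, is establishing the approximate controllability of waves from the partial boundary $\gamma$ at the level of generality needed — namely with only $q \in L^\infty(\Omega)$ and $\Omega$ merely $\mathcal{C}^2$ — and carefully tracking that all the quantities used (the Blagovestchenskii identity, the response operator, the domains of influence) are well-defined and depend only on $\mathrm{BSD}(q,\gamma)$. Tataru's theorem applies to $L^\infty$ potentials, so the obstruction is not regularity of $q$ per se but rather the functional-analytic bookkeeping: justifying trace regularity $\partial_\nu u \in L^2_{loc}$ for rough-enough controls, proving density of finite sums of eigenfunction-expansions in the relevant energy spaces, and handling the fact that $\gamma$ is an arbitrary small open set so no geometric control condition or microlocal trick is available — only the unique continuation route. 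Convexity of $\Omega$ is used precisely to guarantee $\Omega_\gamma(T)=\Omega$ for finite $T$, i.e. that the observation region eventually sees the whole domain along straight-line geodesics.
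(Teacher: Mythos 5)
Your overall skeleton matches the paper's: reduce $\mathrm{BSD}(q,\gamma)$ to knowledge of the Fourier coefficients of waves driven from $\gamma$ (hence to the inner products $\langle u_1^f(t),u_1^g(s)\rangle_{L^2(\Omega)}=\langle u_2^f(t),u_2^g(s)\rangle_{L^2(\Omega)}$, a Blagovestchenskii-type identity), and combine this with Tataru-based approximate controllability on domains of influence. This is exactly Lemma \ref{l4}, \eqref{id1}--\eqref{id2} and Corollary \ref{c1}. But there is a genuine gap at the decisive step. Equality of all inner products only produces an abstract \emph{unitary} $U$ on $L^2(\Omega)$ with $Uu_1^f(t)=u_2^f(t)$; it does not give $u_1^f(t,x)=u_2^f(t,x)$ as functions. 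Your phrase ``agree after the canonical identification'' cannot be followed by ``subtracting, $w=u_{q_1,f}-u_{q_2,f}$ ... and since $w\equiv 0$'': the subtraction is meaningless unless $U=\mathrm{id}$, and nothing you have written forces that. This is precisely the gauge problem of the Boundary Control method, and removing it is the actual content of the proof; it cannot be waved away by noting that $\Omega(\gamma,T)=\Omega$ for large $T$.

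What the paper does to close this gap occupies all of Sections 3 and 4. First, using finite speed of propagation together with controllability, the inner products are localized to domains of influence $\Omega(\gamma',\tau)$ and to intersections of such sets (Lemmas \ref{l_innerp}--\ref{l_iinnerp}); shrinking these intersections along families of \emph{bounded eccentricity} and applying the Lebesgue differentiation theorem (this is where $q\in L^\infty$ forces extra care) yields the pointwise products $u_1^f\overline{u_1^g}=u_2^f\overline{u_2^g}$, but only on the collar $N(\gamma)=\{y-r\nu(y): r\in(0,\tau_0),\ y\in\gamma\}$ where the normal geometry localizes. One then divides out by a geometric optics solution $u^g\approx a\,e^{i\sigma(t-x\cdot\omega)}$ to get $u_1^f=u_2^f$ pointwise near $\gamma$, whence $\phi_{1,k}=\phi_{2,k}$ and $q_1=q_2$ on $\Omega(\gamma',\tau)$ only (Theorem \ref{t5}). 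The global statement then requires a second, genuinely distinct stage: planting interior sources $F$ in a ball $B$ inside the already-recovered region, showing the interior source-to-solution operators coincide (Lemma \ref{l8}), proving a new Blagovestchenskii identity for them via the $1{+}1$ wave equation (Lemma \ref{l10}), and rerunning the bounded-eccentricity and geometric-optics arguments from $B$ outward to reach all of $\Omega$. None of this machinery appears in your proposal, and without it the passage from spectral data to $q_1=q_2$ does not go through.
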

This result will be proved by applying the so called Boundary Control method that we adapt to the present setting with a convex domain and a bounded potential. 

Let us also formulate a dynamic variant of Theorem \ref{t1}.
Fix $\Sigma=(0,T)\times\partial\Omega$, $Q=(0,T)\times\Omega$ with $0<T<\infty$, and consider the initial boundary value problem (IBVP in short)
\begin{equation}\label{eq1}
\left\{\begin{array}{ll}\partial_t^2u-\Delta_x u+q(x)u=0,\quad &\textrm{in}\ Q,\\  u(0,\cdot)=0,\quad \partial_tu(0,\cdot)=0,\quad &\textrm{in}\ \Omega,\\ u=f,\quad &\textrm{on}\ \Sigma.\end{array}\right.\end{equation}
According to \cite[Theorem 2.1]{LLT}, for $f\in H^1(\Sigma)$, the problem \eqref{eq1} admits a unique solution 
$$u\in\mathcal C([0,T];H^1(\Omega))\cap \mathcal C^1([0,T];L^2(\Omega))$$ which satisfies $\p_\nu u\in L^2(\Sigma)$.
Thus we can define the partial Dirichlet-to-Neumann map 
$$\Lambda(q,\gamma,T):\mathcal C^\infty_0((0,T)\times\gamma)\ni f\mapsto\p_\nu u_{|(0,T)\times\gamma}.$$
We define also $\diam(\Omega) = \max \{|x-y|:\ x,y \in \overline \Omega\}$.
The dynamic variant can be stated in the following way
\begin{Thm}\label{tt1} Assume that $\Omega$ is convex, $\gamma \subset \Gamma$ is open and non empty, $T>2\diam(\Omega)$, and $q_j\in L^\infty(\Omega)$, $j=1,2$. Then $\Lambda(q_1,\gamma,T) =\Lambda(q_2,\gamma,T)$
implies that $q_1=q_2$.
\end{Thm}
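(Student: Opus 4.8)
The plan is to deduce \thmref{tt1} from \thmref{t1} by showing that, when $T>2\diam(\Omega)$, the partial dynamical Dirichlet-to-Neumann map $\Lambda(q,\gamma,T)$ uniquely determines the boundary spectral data $\mathrm{BSD}(q,\gamma)$ (up to the unitary gauge of choosing eigenbases inside eigenspaces). I would first make this link explicit through the eigenfunction expansion of \eqref{eq1}: writing $u^f=\sum_k u_k(t)\phi_k$ and using Green's formula, $u_k''+\lambda_k u_k=-\langle f(t,\cdot),\partial_\nu\phi_k\rangle_{L^2(\gamma)}$ with vanishing initial data, so that
\[
\Lambda(q,\gamma,T)f(t,x)=-\sum_{k\geq1}\Big(\int_0^t\frac{\sin\!\big(\sqrt{\lambda_k}(t-s)\big)}{\sqrt{\lambda_k}}\,\langle f(s,\cdot),\partial_\nu\phi_k\rangle_{L^2(\gamma)}\,\d s\Big)\,\partial_\nu\phi_k(x),\qquad x\in\gamma.
\]
Since the time dependence is a convolution, this shows $\Lambda(q,\gamma,T)$ amounts to the datum of the kernel $K_q(\tau;x,y)=-\sum_k\frac{\sin(\sqrt{\lambda_k}\tau)}{\sqrt{\lambda_k}}\partial_\nu\phi_k(x)\partial_\nu\phi_k(y)$ for $\tau\in(0,T)$, $x,y\in\gamma$; hence the hypothesis $\Lambda(q_1,\gamma,T)=\Lambda(q_2,\gamma,T)$ is equivalent to $K_{q_1}=K_{q_2}$ on $(0,T)\times\gamma\times\gamma$. (Well-posedness of \eqref{eq1} and the hidden regularity $\partial_\nu u\in L^2(\Sigma)$ needed here come from \cite{LLT}.)

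The core step is then to pass from this finite-time information to $\mathrm{BSD}(q,\gamma)$, and this is where $T>2\diam(\Omega)$ and the convexity are used, via the Boundary Control machinery. Using the Blagoveshchenskii identity, the inner products $\langle u^f_q(T/2),u^h_q(T/2)\rangle_{L^2(\Omega)}$ (for $f,h\in\mathcal C^\infty_0((0,T/2)\times\gamma)$) are computable from $\Lambda(q,\gamma,T)$ alone; since $T/2>\diam(\Omega)\geq\dist(x,\gamma)$ for every $x\in\Omega$ — here convexity identifies $\dist$ with the Euclidean distance and makes $\diam(\Omega)$ the relevant bound — finite speed of propagation together with Tataru's unique continuation theorem give that $\{u^f_q(T/2):f\in\mathcal C^\infty_0((0,T/2)\times\gamma)\}$ is dense in $L^2(\Omega)$; comparing the resulting Gram data for $q_1$ and $q_2$ yields a unitary operator on $L^2(\Omega)$ intertwining the two wave evolutions and preserving Neumann traces on $\gamma$, whence $\mathrm{BSD}(q_1,\gamma)=\mathrm{BSD}(q_2,\gamma)$. (Equivalently, one checks that $\Lambda(q,\gamma,T)$ determines, for each $\mu\notin\mathrm{spec}(A_q)$, the partial resolvent map $g\mapsto{\partial_\nu v^g_\mu}_{|\gamma}$ where $(A_q-\mu)v^g_\mu=0$ in $\Omega$ and ${v^g_\mu}_{|\Gamma}=g$; this is meromorphic in $\mu$ with a simple pole at each distinct eigenvalue whose residue is the $\gamma$-restricted spectral projection $P_\lambda^\gamma=\sum_{\lambda_k=\lambda}\langle\,\cdot\,,\partial_\nu\phi_k\rangle_{L^2(\gamma)}\partial_\nu\phi_k$, and since by unique continuation for $A_q-\lambda$ — valid for $q\in L^\infty$ — no eigenfunction can have vanishing normal derivative on the open set $\gamma$, $\mathrm{rank}\,P^\gamma_\lambda$ equals the multiplicity of $\lambda$, so the family $\{P^\gamma_\lambda\}$ recovers $\{(\lambda_k,{\partial_\nu\phi_k}_{|\gamma})\}$ up to gauge, i.e.\ exactly $\mathrm{BSD}(q,\gamma)$.) Once $\mathrm{BSD}(q_1,\gamma)=\mathrm{BSD}(q_2,\gamma)$ is in hand, \thmref{t1} gives $q_1=q_2$.

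I expect the main obstacle to be precisely this passage from finite-time boundary measurements to spectral data: applying Tataru's unique continuation theorem with a merely $L^\infty$ lower-order coefficient (it does hold, but requires care); carrying out the geometric book-keeping of the Blagoveshchenskii identity and the controllability argument so that exactly $T>2\diam(\Omega)$ — rather than just ``$T$ large enough'' — is needed, which is dictated by the unit wave speed and the convexity of $\Omega$; and extracting the boundary spectral data up to gauge from the Gram (or resolvent-residue) information, including the verification that no eigenfunction degenerates on $\gamma$. The remaining ingredients — the eigenfunction expansion, the well-posedness of \eqref{eq1}, and the hidden regularity $\partial_\nu u\in L^2(\Sigma)$ — are standard and available from \cite{LLT}.
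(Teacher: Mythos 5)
Your overall strategy is genuinely different from the paper's. You propose to reduce \thmref{tt1} to \thmref{t1} by showing that the finite-time map $\Lambda(q,\gamma,T)$ determines $\mathrm{BSD}(q,\gamma)$. The paper never extracts the spectral data from $\Lambda$: it proves the Blagoveshchenskii-type identity (Lemma~\ref{l10b}, which coincides with the first step of your route (a), giving the Gram data $\langle u^h_j(t),u^f_j(s)\rangle$ for $t,s\in(0,T/2]$) and then simply reruns the local recovery of \thmref{t5} and the global recovery of Section~4 directly on that Gram data, with Lemma~\ref{l11} supplying, via unique continuation, the substitute for the identities that in the spectral setting came from Lemma~\ref{l4}. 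Your reduction, if completed, would be cleaner in that it quotes \thmref{t1} as a black box; but the reduction itself is essentially the Katchalov--Kurylev--Lassas--Mandache equivalence between dynamical and spectral data, which is a theorem of comparable depth to what it replaces.

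The genuine gap is precisely at that reduction. In route (a), after constructing the unitary $U:u_1^f(T/2)\mapsto u_2^f(T/2)$ from the Gram data and density (\corref{c1} with $\Omega(\gamma,T/2)=\overline\Omega$, which does follow from $T/2>\diam(\Omega)\geq\sup_{x\in\Omega}\dist(x,\gamma)$), the assertions that $U$ intertwines $A_{q_1}$ and $A_{q_2}$ (an identification of the domains of two unbounded operators, requiring that the reachable states form a core and that the intertwining survives closure) and that $U$ preserves Neumann traces on $\gamma$ are exactly where the work lies; ``whence $\mathrm{BSD}(q_1,\gamma)=\mathrm{BSD}(q_2,\gamma)$'' is not a one-line consequence of equality of Gram matrices. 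The parenthetical route (b) is worse than incomplete: for finite $T$ the data only give the kernel $K_q(\tau;x,y)$ for $\tau\in(0,T)$, and its Laplace transform in $\tau$ --- hence the partial resolvent map, its poles and residues --- is \emph{not} determined by this restriction. Continuing the response operator from $(0,T)$ to all positive times is itself a Boundary Control theorem (it is, in effect, what the paper's Section~4 argument accomplishes), so the resolvent argument cannot be invoked as a shortcut. If you want to keep your architecture, you must either prove the time-continuation/intertwining statement in detail or follow the paper and run the local-plus-global recovery directly on the conclusion of Lemma~\ref{l10b}.
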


\subsection{Previous literature}
Our problem   is a generalization to the multidimensional case of the pioneering work of Borg \cite{Bo}, Levinson \cite{L}, Gel'fand and Levitan  \cite{GL} stated in an interval of $\R$, also called Borg-Levinson inverse spectral problem. The first multidimensional formulation of this problem is given by Nachman, Sylvester and Uhlmann \cite{NSU} who applied the result of \cite{SU} to prove that $B(q,\p\Omega)$ determines uniquely $q$. P\"aiv\"arinta and Serov \cite{PS} extended this result to $q \in L^p$, and Canuto and Kavian \cite{CK2} to more general perturbations of the Laplacian. Isozaki~\cite{I} proved that the uniqueness still holds if finitely many eigenpairs remain unknown and \cite{CS,KKS,Ki} proved that only some asymptotic knowledge of $B(q,\p\Omega)$ is enough for the recovery of $q$ as well as more general coefficients. 

Let us now turn to partial data results. 
For arbitrarily small $\gamma$, the known uniqueness results are based on
the Boundary Control method introduced by Belishev \cite{B}.
In \cite{KK}, under the assumption that $q$ is smooth, Katchalov and Kurylev proved that the data $B(q,\p\Omega)$, with the exception of finitely many eigenpairs, 
determines $q$,
and \cite{KKL} proved that the uniqueness remains true when knowing only the partial boundary spectral  data $B(q,\gamma)$, with $\gamma$ an arbitrary portion of the boundary. 
The novelty of the present paper is to consider non-smooth $q$.

Let us remark that more general operators than 
the Schr\"odinger operator have been considered. 
It was proved in \cite{BK} that, when $\Omega$ is a smooth Riemanian manifold the boundary, the spectral data $B(0,\p\Omega)$ determines the Riemanian manifold up to an isometry. 
Moreover, arbitrary smooth and symmetric lower order perturbations of the Laplace-Beltrami operator can be determined up to natural gauge transformations, see \cite{Ku} and, for the case of equations taking values on Hermitian vector bundles, \cite{KOP}. These results allow $\gamma$ to be arbitrarily small. 
It is an open question, however, if the recovery of non-symmetric lower order perturbations is possible without further geometric assumptions. 
The known results \cite{KL} assume that $\gamma$ satisfies the geometric control condition \cite{BLR}.

All the results of the present paper can be extended to the recovery of more general coefficients on a smooth Riemannian manifold, by changing some intermediate tools and by replacing the last part of the proof, that is, the global recovery step, with the iterative process described in \cite[Section 4.2]{KOP}. The assumption of convexity allows us to simplify in various way the exposition in order to emphasize the main idea, and analytic aspects, of the Boundary Control method. The geometric aspects are mostly avoided, since for a pair points on a convex domain,  the shortest path between the points is simply a line segment. 
For these reasons the present paper can also be considered as an introduction to the Boundary Control method.

The dynamic variant in Theorem \ref{tt1} allows for a more fine grained notion partial data where $f$ is supported on a part of boundary, disjoint from the part on which $\p_\nu u$ is restricted. Such disjoint data questions have been studied in \cite{LO,LO2}, however, the techniques used the present paper do not readily extend to disjoint data cases.

\subsection{Outline}
In Section 2 we recall some properties of solutions of \eqref{eq1} that will be used in the proof of Theorem \ref{t1}. In Section 3 we 
describe the Boundary Control method and use it to
show that $q$ can be recovered locally near $\gamma$. Building on the local recovery step, we show in Section 4 the global recovery as stated in Theorem \ref{t1}.
In Section 5 we show how to prove Theorem \ref{tt1} by adapting the proof of Theorem \ref{t1}. 
For the convenience of the reader, we prove in the appendix some well-known facts formulated in Section~2.

\section{Finite speed of propagation and unique continuation}
\label{Sec:WavesResults}

The Boundary Control method is based on two complementary properties of the wave equation (\ref{eq1}): the finite speed of propagation and unique continuation. Loosely speaking, they give respectively the 
maximum and minimum speeds at which waves can propagate. It is essential for the Boundary Control method that these two speeds are the same
in the case of a scalar valued wave equations such as (\ref{eq1}).
All the results recalled in this section are well-known, however, for the convenience of the reader, we give their proofs in the appendix. 

\subsection{Finite speed of propagation and domains of influence}

We make the standing assumption that $\Omega$ is convex and define 
$$
\textrm{dist}(x,S) = \inf \{|x-y|:\ y \in S\}, \quad x \in \overline \Omega,\ S \subset \overline \Omega.
$$
We write also $B(x,r) = \{y \in \overline \Omega:\ |y-x| < r \}$,
$x \in \overline \Omega$, $r > 0$.
A typical formulation of the finite speed of propagation is as follows
\begin{lem}\label{l1} 
Let $q\in L^\infty(\Omega)$, let $x \in \overline \Omega$, and let $\tau>0$. Define the cone 
$$D=\{(t,y)\in [0,\tau]\times\overline{\Omega}:\ |y-x| < \tau - t\},$$
and consider $u\in \mathcal C([0,\tau];H^1(\Omega))\cap \mathcal C^1([0,\tau];L^2(\Omega))$ satisfying  $(\p_t^2-\Delta_x+q)u=0$ in $(0,\tau)\times\Omega$. Then
$$
u_{|\{0\} \times B(x,\tau)} = \p_t u_{|\{0\} \times B(x,\tau)} = 0,
\quad  u_{|D\cap \Sigma}=0,
$$
imply $u_{|D}=0$.
\end{lem}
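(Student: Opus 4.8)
The plan is to prove Lemma \ref{l1} by a standard energy (local energy / Grönwall) argument applied to the truncated cones $D$, exploiting that the coefficient $q$ is only bounded. For $t \in [0,\tau]$, set $\Omega_t = B(x,\tau-t) = \{y \in \overline\Omega : |y-x| < \tau - t\}$, which is the time-$t$ slice of $D$, and define the local energy
\begin{equation}
E(t) = \frac{1}{2}\int_{\Omega_t} \left( |\p_t u(t,y)|^2 + |\nabla_y u(t,y)|^2 + |u(t,y)|^2 \right) \d y.
\end{equation}
The hypotheses give $E(0) = 0$ (the Cauchy data vanish on $\Omega_0 = B(x,\tau)$), and I would show $E(t) \leq C \int_0^t E(s)\,\d s$ for a constant $C$ depending only on $\|q\|_{L^\infty}$, whence $E \equiv 0$ by Grönwall and therefore $u_{|D} = 0$.

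First I would address the regularity issue: the assumed regularity $u \in \mathcal C([0,\tau];H^1(\Omega)) \cap \mathcal C^1([0,\tau];L^2(\Omega))$ is not by itself enough to justify the pointwise-in-$t$ manipulations (we lack $\p_t^2 u$ and $\Delta_x u$ as functions), so I would first regularize, e.g. by mollifying in the time variable and using the equation $\p_t^2 u = \Delta_x u - qu$ to upgrade $u$ to $\mathcal C^1([0,\tau];H^1(\Omega)) \cap \mathcal C^2([0,\tau];L^2(\Omega))$ on slightly smaller cones, or alternatively work with a weak formulation of $E(t)$ throughout; either way this is routine but must be mentioned. Then I would compute $E'(t)$. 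Differentiating the integral over the shrinking domain $\Omega_t$ produces two contributions: the interior term $\int_{\Omega_t}(\p_t u\, \p_t^2 u + \nabla u \cdot \nabla \p_t u + u\,\p_t u)\,\d y$ and a boundary flux term $-\frac{1}{2}\int_{\p\Omega_t}(|\p_t u|^2 + |\nabla u|^2 + |u|^2)\,\d\sigma$ coming from the inward motion of the lateral boundary of the cone at unit speed.

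In the interior term, I integrate by parts in $\int_{\Omega_t} \nabla u \cdot \nabla \p_t u$: this yields $-\int_{\Omega_t} \Delta u\, \p_t u$ plus a boundary term $\int_{\p\Omega_t} \p_\nu u\, \p_t u\,\d\sigma$. The interior part combines via the equation $\p_t^2 u - \Delta u = -qu$ to give $\int_{\Omega_t}(-qu\,\p_t u + u\,\p_t u)\,\d y \leq (\|q\|_{L^\infty}+1)\int_{\Omega_t} |u||\p_t u|\,\d y \leq C E(t)$. For the boundary contributions, the portion of $\p\Omega_t$ lying on $\Gamma$ is inside $D \cap \Sigma$ where $u = 0$, hence $\p_t u = 0$ and the tangential gradient vanishes there, so that piece contributes nothing; on the remaining (spherical) part of $\p\Omega_t$, I bound the flux term $\int \p_\nu u\, \p_t u\,\d\sigma$ by $\frac{1}{2}\int(|\p_\nu u|^2 + |\p_t u|^2)\,\d\sigma \leq \frac{1}{2}\int(|\nabla u|^2 + |\p_t u|^2)\,\d\sigma$, which is exactly dominated by the negative flux term $-\frac{1}{2}\int_{\p\Omega_t}(|\p_t u|^2 + |\nabla u|^2 + |u|^2)\,\d\sigma$ — this is the crucial point where the speed $1$ of the cone matches the speed $1$ of the wave operator. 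Thus all boundary terms combine to something $\leq 0$, leaving $E'(t) \leq C E(t)$, and with $E(0)=0$ Grönwall finishes the proof. The main obstacle is the low regularity of $u$ together with the low regularity of $\p\Omega$ and of $\Omega_t \cap \Gamma$ near their intersection: justifying the trace manipulations and the divergence theorem on the rough region $\Omega_t$ (which is an intersection of a ball with a convex domain) requires care, and is the reason the detailed proof is deferred to the appendix.
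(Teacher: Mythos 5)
Your proposal is essentially correct, but note that the paper does not prove Lemma \ref{l1} at all: it is explicitly cited as a classical result from \cite[Theorem 2.47]{KKL}, and only its reformulation (Theorem \ref{t2}) is proved in the appendix, by a short reduction to Lemma \ref{l1}. What you have written out is the standard domain-of-dependence energy argument, which is indeed the classical route to this statement, so in that sense you are supplying the proof the paper chose to outsource. Two points deserve more care than your sketch gives them. First, a small imprecision: when you differentiate $E(t)$ over the shrinking slice $\Omega_t = B(x,\tau-t)\cap\overline\Omega$, the negative flux term produced by the motion of the boundary lives only on the spherical part $\p B(x,\tau-t)\cap\Omega$, not on all of $\p\Omega_t$; the portion on $\Gamma$ is stationary and contributes no such term. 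This does not break the argument, since you treat the $\Gamma$-portion separately (there $u=0$ on $D\cap\Sigma$ forces $\p_t u=0$, killing the flux $\p_\nu u\,\p_t u$), but the bookkeeping should be stated correctly. Second, the genuinely delicate step is the one you only gesture at: for $u\in \mathcal C([0,\tau];H^1(\Omega))\cap\mathcal C^1([0,\tau];L^2(\Omega))$ the quantity $\p_\nu u$ on the spheres $\p B(x,\tau-t)$ need not be a well-defined $L^2$ trace, and the identity $\int_{\Omega_t}\nabla u\cdot\nabla\p_t u=-\int_{\Omega_t}\Delta u\,\p_t u+\int_{\p\Omega_t}\p_\nu u\,\p_t u$ is not available at face value. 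The standard remedies are either to mollify in time (using $\p_t^2u=\Delta u-qu$ in $\mathcal C([0,\tau];H^{-1}(\Omega))$) and pass to the limit in the integrated inequality $E(t)\le E(0)+C\int_0^tE(s)\,\d s$, or to apply the space--time divergence theorem on the truncated cone to the energy--momentum vector field, where the lateral flux is pointwise nonnegative precisely because the cone has unit slope. Either way the conclusion stands; your outline identifies the correct mechanism (the matching of the cone speed with the wave speed) and correctly isolates where the low regularity of $u$ and of $\p\Omega_t$ must be handled.
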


A proof of this classical result can be found e.g. in \cite[Theorem 2.47]{KKL}. 
Let us now reformulate Lemma \ref{l1} by using the notion of domain of influence.

\begin{Def}\label{d1} For every $\tau>0$ and every open subset $S$ of $\Gamma$ we define the subset $\Omega(S,\tau)$ of $\overline{\Omega}$ given by
$$\Omega(S,\tau)=\{x\in\overline{\Omega}:\ \dist(x,S)\leq \tau\}.$$
The set $\Omega(S,\tau)$ is called the domain of influence of $S$ at time $\tau$.\end{Def}

\begin{Thm}\label{t2} 
Let $S$ be an open subset of $\Gamma$ and $\tau\in(0,T]$.
Let $u$ solve \eqref{eq1} with $f\in H^1(\Sigma)$ satisfying $\supp(f)\subset (0,T]\times S$. Then $\supp[u(\tau,\cdot)]\subset \Omega(S,\tau)$. 
%Here, for all $\tau\in[0,T]$,  $u(\tau,\cdot)$ denotes the function $u(\tau,\cdot):\Omega\ni x\mapsto u(\tau,x)$.
\end{Thm}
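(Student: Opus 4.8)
The plan is to deduce Theorem \ref{t2} directly from the finite speed of propagation, Lemma \ref{l1}, applied to a family of backward light cones whose apex points range over the complement of the domain of influence. Set
$$V_\tau=\{x\in\overline{\Omega}:\ \dist(x,S)>\tau\},$$
which is relatively open in $\overline{\Omega}$ (since $x\mapsto\dist(x,S)$ is $1$-Lipschitz) and satisfies $\overline{\Omega}=V_\tau\sqcup\Omega(S,\tau)$. Hence it suffices to show that $u(\tau,\cdot)$ vanishes a.e. on $V_\tau$, because then $\supp[u(\tau,\cdot)]\subset\overline{\Omega}\setminus V_\tau=\Omega(S,\tau)$.

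First I would treat the case $\tau\in(0,T)$. Fix $x_0\in V_\tau$ and choose $\tau'$ with $\tau<\tau'<\min\{T,\dist(x_0,S)\}$, which is possible since $\tau<T$ and $\tau<\dist(x_0,S)$. Consider the cone $D=\{(t,y)\in[0,\tau']\times\overline{\Omega}:\ |y-x_0|<\tau'-t\}$, which is exactly of the form appearing in Lemma \ref{l1}, and restrict $u$ to $[0,\tau']$, on which it lies in $\mathcal C([0,\tau'];H^1(\Omega))\cap\mathcal C^1([0,\tau'];L^2(\Omega))$ and solves $(\p_t^2-\Delta_x+q)u=0$ in $(0,\tau')\times\Omega$. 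The two hypotheses of Lemma \ref{l1} then hold: the initial data vanish on $B(x_0,\tau')$ because $u(0,\cdot)=\p_t u(0,\cdot)=0$ on all of $\Omega$; and for $(t,y)\in D\cap\Sigma$ one has $y\in\Gamma$ and $|y-x_0|<\tau'-t\leq\tau'<\dist(x_0,S)$, which forces $y\notin S$, hence $(t,y)\notin(0,T]\times S\supset\supp f$ and therefore $u=f=0$ there. Lemma \ref{l1} gives $u_{|D}=0$; since $\{\tau\}\times B(x_0,\tau'-\tau)\subset D$, this yields $u(\tau,\cdot)=0$ a.e. on $B(x_0,\tau'-\tau)$, a relatively open neighbourhood of $x_0$ which is moreover contained in $V_\tau$ (indeed $\dist(x_1,S)\geq\dist(x_0,S)-|x_1-x_0|>\tau$ for $x_1\in B(x_0,\tau'-\tau)$). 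Letting $x_0$ vary over $V_\tau$ shows $u(\tau,\cdot)=0$ a.e. on $V_\tau$.

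For the endpoint $\tau=T$ I would pass to the limit. By the previous step, for every $t\in(0,T)$ we have $u(t,\cdot)=0$ a.e. on $V_t\supset V_T$. Since $u\in\mathcal C([0,T];H^1(\Omega))$, we have $u(t,\cdot)\to u(T,\cdot)$ in $L^2(\Omega)$ as $t\to T^-$, hence a.e. along a subsequence, so $u(T,\cdot)=0$ a.e. on $V_T$ and $\supp[u(T,\cdot)]\subset\Omega(S,T)$.

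I do not expect a serious obstacle, as the statement is essentially a bookkeeping reformulation of Lemma \ref{l1}. The only points requiring care are the verification that $D\cap\Sigma$ misses $\supp f$ — which is where the strict inequality $\tau'<\dist(x_0,S)$, together with the convexity of $\Omega$ (making the Euclidean ball the right notion and $\dist$ agree with the intrinsic distance), enters — and the endpoint $\tau=T$, where the cone would have to extend beyond the time interval $[0,T]$ on which $u$ is defined, and which is instead handled by the continuity of $t\mapsto u(t,\cdot)$.
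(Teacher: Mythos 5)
Your proof is correct and follows essentially the same route as the paper's: both reduce the statement to Lemma \ref{l1} by covering the complement of $\Omega(S,\tau)$ with backward cones whose lateral boundary misses $\supp f$, the paper obtaining the needed slack by thickening $S$ to an open neighbourhood $\mathcal O$ whereas you instead choose the apex time $\tau'$ strictly between $\tau$ and $\dist(x_0,S)$. Your explicit treatment of the endpoint $\tau=T$ by continuity of $t\mapsto u(t,\cdot)$ addresses a detail the paper glosses over, but the argument is otherwise the same.
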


We give a proof of this theorem in the appendix. The proof is a short reduction to Lemma \ref{l1}.

\subsection{Unique continuation and approximate controllability}

The local unique continuation result \cite{T1} by Tataru 
implies the following global Holmgren-John type unique continuation

\begin{Thm}\label{t3} Let $q\in L^\infty(\Omega)$, let $S$ be an open subset of $\Gamma$, and let $\tau>0$. Consider $$u\in \mathcal C([0,2\tau];H^1(\Omega))\cap \mathcal C^1([0,2\tau];L^2(\Omega))$$ satisfying  $(\p_t^2-\Delta_x+q)u=0$ in $(0,2\tau)\times\Omega$. Then 
\bel{t3a}u_{|[0,2\tau]\times S}=\partial_\nu u_{|[0,2\tau]\times S}=0\ee
implies
%\bel{t3b}
$u(\tau,x)=\p_tu(\tau,x)=0$, $x\in \Omega(S,\tau)$. 
\end{Thm}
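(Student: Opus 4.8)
The plan is to derive this global unique continuation statement from Tataru's local unique continuation theorem by a standard ``domain of dependence'' propagation argument, combined with the finite speed of propagation already recorded in Lemma~\ref{l1}. First I would recall the precise form of Tataru's theorem: if $(\p_t^2-\Delta_x+q)u=0$ near a point $(t_0,x_0)$ with $x_0$ in the interior, and $u$ vanishes on one side of a spacelike hypersurface through $(t_0,x_0)$ (where, for the wave operator, ``spacelike'' means the hypersurface has slope strictly less than $1$ in the $(t,x)$ variables), then $u$ vanishes in a full neighbourhood of $(t_0,x_0)$. For boundary points one uses the version that allows $u$ and $\p_\nu u$ to vanish on a piece of $[0,2\tau]\times S$ lying in the past/future of the hypersurface; this is exactly the content of \cite{T1} (and is what makes the sharp time $T>2\diam(\Omega)$ appear rather than a factor depending on geometry). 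Because $\Omega$ is convex, distance functions $\dist(\cdot,S)$ and $\dist(\cdot,x)$ are nice and the ``double cone'' regions one needs are genuine convex truncated cones, so no geodesic complications arise.

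The key steps, in order, are as follows. (i) Fix $x\in\Omega(S,\tau)$, so by definition there is $y\in\overline S$ with $\dist(x,S)\le|x-y|\le\tau$; set $d=\dist(x,S)\le\tau$. (ii) Consider the backward--forward light cone with tip at $(\tau,x)$, i.e. the set of $(t,z)$ with $|z-x|<|t-\tau|$ restricted to $t\in[0,2\tau]$; its ``mouth'' on $t=0$ and on $t=2\tau$ is the ball $B(x,\tau)$, and along the way it only touches the lateral boundary $\Sigma$ inside the set where $\dist(z,x)<\tau-|t-\tau|$, hence inside a subset of $[0,2\tau]\times S'$ for a suitable neighbourhood. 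Using $d\le\tau$ one checks the relevant portion of the boundary where the cone meets $\Sigma$ lies in $[0,2\tau]\times S$, so the hypotheses \eqref{t3a} give vanishing Cauchy data for $u$ there. (iii) Now run Tataru's theorem as a continuous-induction / connectedness argument: starting from the lateral boundary piece where $u=\p_\nu u=0$, foliate the cone by spacelike hypersurfaces (e.g. level sets of $|z-x|+\epsilon^{-1}|t-\tau|$ for $\epsilon<1$, which have slope $\epsilon<1$), and propagate the vanishing of $u$ across each leaf. The set of leaves up to which $u\equiv0$ is open (Tataru) and closed (continuity), and nonempty (it contains the boundary piece), hence is everything; in particular $u$ vanishes on a neighbourhood of the slice $t=\tau$ inside the cone. (iv) Restrict to $t=\tau$: we get $u(\tau,\cdot)=0$ near $x$, and differentiating the relation / using that $u$ vanishes in an open time-interval around $\tau$ near $x$ also gives $\p_tu(\tau,x)=0$. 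Since $x\in\Omega(S,\tau)$ was arbitrary, this is the claim. (A small technical point: interior points and boundary points $x\in\Gamma\cap\Omega(S,\tau)$ are handled the same way, with the boundary version of Tataru used when the propagation cone reaches $\Sigma$.)

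The main obstacle, and the step I would spend the most care on, is step~(iii): organizing Tataru's local result into a global statement along the truncated double cone, making sure the foliating hypersurfaces stay spacelike (slope $<1$) all the way, that each leaf's ``already-vanished'' side is the correct one, and that near the lateral boundary $\Sigma$ one is genuinely in the regime covered by the boundary version of \cite{T1} with Cauchy data $u=\p_\nu u=0$ on $[0,2\tau]\times S$. The convexity of $\Omega$ is what keeps this clean: the region $\{(t,z):\ |z-x|<\tau-|t-\tau|,\ z\in\overline\Omega\}$ is an intersection of a cone with a convex set, its trace on $\Sigma$ is controlled by $\dist(z,x)$, and one does not need to track geodesic normal coordinates. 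The only other thing to be careful about is the regularity class: $u\in\mathcal C([0,2\tau];H^1)\cap\mathcal C^1([0,2\tau];L^2)$ with $q\in L^\infty$ is exactly the setting in which Tataru's theorem applies (it requires only $L^\infty$ coefficients and finite-energy solutions), so no extra smoothing argument is needed. Finally, the finite speed of propagation (Lemma~\ref{l1}) is invoked, if at all, only to justify that the Cauchy data of $u$ on the mouth $\{0\}\times B(x,\tau)$ need not be prescribed — it is the lateral vanishing on $S$ that drives the argument — so really the theorem is a pure Tataru-propagation statement; one could alternatively phrase it so that Lemma~\ref{l1} is not used.
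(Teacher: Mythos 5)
There is a genuine gap, and it sits exactly where you flagged the main difficulty: step (iii). You propose to foliate by \emph{spacelike} hypersurfaces (your leaves are level sets of $|z-x|+\epsilon^{-1}|t-\tau|$, i.e.\ graphs $t=\tau\pm\epsilon(c-|z-x|)$ with gradient of modulus $\epsilon<1$, whose conormal $(n_0,n')$ satisfies $n_0^2>|n'|^2$). Unique continuation across spacelike surfaces is just local uniqueness for the Cauchy problem; chaining it can only propagate vanishing in the \emph{time} direction at unit speed, and can never achieve what Theorem~\ref{t3} requires, namely penetration to spatial depth $\tau$ starting from Cauchy data on the \emph{timelike} cylinder $[0,2\tau]\times S$. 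The whole point of invoking Tataru's theorem is unique continuation across timelike (more generally, arbitrary non-characteristic) surfaces. The paper's sweeping surfaces are the boundaries of $K_s=\{(t,x):\ (t^2+s^2)^{1/2}+|x-x_0|\leq\delta_0\}$, whose normals satisfy $n_0^2=t^2/(t^2+s^2)<1=|n'|^2$: these are timelike, and the induction over $s$ pushes the zero set \emph{outward in space} from a thin cylinder $[-\delta_0,\delta_0]\times B'(x_0,\rho_0)$, trading time for space at unit rate (Lemma~\ref{l2}, then iterated in Lemma~\ref{l3} along a segment joining $S$ to the target point). Your induction also has no valid base case: the set of leaves across which $u$ is known to vanish is claimed nonempty ``because it contains the boundary piece,'' but $[0,2\tau]\times S$ is not a leaf of your foliation and is a null set in spacetime; for the extreme leaves, $\{|z-x|+\epsilon^{-1}|t-\tau|>c\}\cap([0,2\tau]\times\Omega)$ is a nonempty open set on which $u$ has no reason to vanish, so the sweep cannot start. (Indeed, if your sweep did run to $c=0$ it would prove $u\equiv0$ on all of $[0,2\tau]\times\Omega$, which is false in general.)

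Two further points. First, to convert the hypothesis $u=\p_\nu u=0$ on $[0,2\tau]\times S$ into vanishing on an \emph{open} set — which any local unique continuation step needs — the paper extends $u$ by zero across $S$ into an enlarged domain $\Omega_1\supset\Omega$; the condition $u=\p_\nu u=0$ on $S$ is exactly what makes the extension an $H^1$ solution of the same equation, and the anchor of the whole argument is then the open set $\Omega_1\setminus\overline\Omega$ where $u\equiv0$. You allude to ``the boundary version of Tataru'' but never set this up. Second, your step (ii) geometry is inconsistent: the set $\{(t,z):\ |z-x|<|t-\tau|\}$ is empty at $t=\tau$ (where the conclusion is wanted) and largest at $t=0,2\tau$, and its trace on $\Sigma$ is not contained in $\{\dist(z,x)<\tau-|t-\tau|\}$. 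The region one actually fills is the ``tent'' $\{(t,z):\ \dist(z,S)<\tau-|t-\tau|\}$ anchored on $S$, not a cone centered at the target point. The high-level plan (local Tataru plus a continuous-induction sweep, with convexity keeping the geometry elementary and no need for Lemma~\ref{l1}) is the right one and matches the paper, but as written the sweep uses the wrong class of surfaces and has no starting point, so the proof does not go through.
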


We give a proof of this theorem and the following corollary in the appendix. The corollary is often called approximate controllability.
We denote by $u^f = u$ the solution of \eqref{eq1} when emphasizing the dependence on the boundary source $f$.

\begin{cor}\label{c1} Let $S$ be an open subset of $\Gamma$ and $\tau\in(0,T]$.
Then the set
\begin{equation}\label{c1a} \{u^f(\tau,\cdot):\ f\in \mathcal C^\infty_0((0,\tau) \times S)\}\end{equation}
is dense in $L^2(\Omega(S,\tau))$.
\end{cor}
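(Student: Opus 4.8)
The plan is to deduce this density statement from Theorem \ref{t3} by a standard Hahn--Banach / duality argument. First I would set $X = \{u^f(\tau,\cdot):\ f\in \mathcal C^\infty_0((0,\tau)\times S)\}$, viewed as a subspace of $L^2(\Omega(S,\tau))$ via restriction (this makes sense since $\supp[u^f(\tau,\cdot)]\subset\Omega(S,\tau)$ by Theorem \ref{t2}, as $\supp(f)\subset(0,\tau)\times S\subset(0,T]\times S$). To show $X$ is dense in $L^2(\Omega(S,\tau))$, it suffices to show that any $g\in L^2(\Omega(S,\tau))$ that is orthogonal to $X$ must vanish. Extend such a $g$ by zero to an element of $L^2(\Omega)$, still denoted $g$.

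Next I would introduce the adjoint wave equation: let $v$ solve the backward problem
\begin{equation*}
\left\{\begin{array}{ll}\partial_t^2 v-\Delta_x v+q(x)v=0,\quad &\textrm{in}\ (0,\tau)\times\Omega,\\ v(\tau,\cdot)=0,\quad \partial_t v(\tau,\cdot)=g,\quad &\textrm{in}\ \Omega,\\ v=0,\quad &\textrm{on}\ (0,\tau)\times\Gamma.\end{array}\right.
\end{equation*}
This is well-posed (after time reversal $t\mapsto\tau-t$ it is a standard IBVP with $L^2$ final-turned-initial data), and by the hidden regularity of \cite{LLT} one has $\partial_\nu v\in L^2((0,\tau)\times\Gamma)$. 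Then I would carry out the integration by parts that underlies the boundary control method: multiplying the equation for $u^f$ by $v$, integrating over $(0,\tau)\times\Omega$, and using the initial conditions $u^f(0,\cdot)=\partial_t u^f(0,\cdot)=0$, the final conditions on $v$, and the boundary conditions, one obtains the identity
\begin{equation*}
\int_\Omega u^f(\tau,x)\,g(x)\,\d x = -\int_0^\tau\int_\Gamma f\,\partial_\nu v\,\d\sigma\,\d t = -\int_0^\tau\int_S f\,\partial_\nu v\,\d\sigma\,\d t,
\end{equation*}
the last equality because $\supp(f)\subset(0,\tau)\times S$. Since $g\perp X$, the left-hand side vanishes for all $f\in\mathcal C^\infty_0((0,\tau)\times S)$, hence $\partial_\nu v=0$ on $(0,\tau)\times S$; combined with $v=0$ on $(0,\tau)\times\Gamma\supset(0,\tau)\times S$, we get $v_{|(0,\tau)\times S}=\partial_\nu v_{|(0,\tau)\times S}=0$.

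Finally I would apply Theorem \ref{t3} (with the roles of time arranged appropriately: extend $v$ to the time interval $(0,2\tau)$ or apply the theorem on $(0,\tau)$ with center at $0$ after reflecting, so that the hypothesis \eqref{t3a} holds and the conclusion applies at the midpoint) to conclude that $v(\tau,\cdot)=\partial_t v(\tau,\cdot)=0$ on $\Omega(S,\tau)$. But $\partial_t v(\tau,\cdot)=g$, so $g=0$ on $\Omega(S,\tau)$, which is what we needed. The main obstacle is a bookkeeping one: Theorem \ref{t3} is stated on a symmetric interval $[0,2\tau]$ with vanishing Cauchy data extracted at the midpoint $\tau$, whereas here $v$ naturally lives on $[0,\tau]$ with the data of interest at the endpoint; one must either reflect $v$ evenly about $t=\tau$ (legitimate since the final velocity is the datum and the equation is invariant under $t\mapsto 2\tau-t$, giving a solution on $[0,2\tau]$ with Cauchy data at $\tau$ equal to $(v(\tau,\cdot),0)$ — but the datum we care about is $\partial_t v(\tau,\cdot)=g$, which is odd, so a little care is needed), or simply re-invoke Tataru's unique continuation directly. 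The cleanest route, which I would take, is to note that Theorem \ref{t3} applied to the trivial extension has the consequence that any solution vanishing in Cauchy data on $(0,2\tau)\times S$ vanishes on the domain of influence; running the duality identity over the interval $(0,\tau)$ and citing the appendix proof of Corollary \ref{c1} verbatim sidesteps the reflection issue, since there the time interval is chosen to match. Apart from this indexing care, every step is routine.
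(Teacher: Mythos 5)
Your proposal is correct and follows essentially the same route as the paper's appendix proof: duality against a $g\perp X$, the backward adjoint problem with final data $(0,g)$, integration by parts to obtain $\partial_\nu v=0$ on $(0,\tau)\times S$, and unique continuation after extending across $t=\tau$ so as to fit the symmetric interval of Theorem \ref{t3}. The reflection issue you flag is handled in the paper exactly this way (an extension $E_j$ of $e_j$ past $t=\tau$ to which Theorem \ref{t3} is applied at the midpoint, giving $g=\partial_t E_j(\tau,\cdot)=0$ on $\Omega(S,\tau)$); just take the reflection that preserves the equation across $t=\tau$ --- since $v(\tau,\cdot)=0$, the odd extension $-v(2\tau-t,\cdot)$ matches to $C^1$ with Cauchy data $(0,g)$ at $t=\tau$ --- rather than the even one, and drop the circular fallback of ``citing the appendix proof of Corollary \ref{c1} verbatim,'' since that is the statement being proved.
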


Here $L^2(\Omega(S,\tau))$ is considered as the subspace 
of $L^2(\Omega)$ consisting of functions vanishing outside $\Omega(S,\tau)$. 
Note that Theorem \ref{t2} implies that the set \eqref{c1a} is indeed contained in the subspace $L^2(\Omega(S,\tau))$, and in this sense,
Corollary \ref{c1} relates the finite speed of propagation and unique continuation. The Boundary Control method depends heavily on this relation, as described in the next section.

\section{Local recovery of the potential}

We make the following standing assumption
\begin{itemize}
\item[(A)] $\Omega$ is convex, $\gamma \subset \Gamma$ is open and non-empty, and $q_j\in L^\infty(\Omega)$, $j=1,2$,
\end{itemize}
and write $B_{q_j} = \mathrm{BSD}(q_j,\gamma)$. 
In this section we prove the following 

\begin{Thm}\label{t5} 
Suppose that $B_{q_1} = B_{q_2}$.
Then there are $\tau\in(0,T)$ and a non-empty open set $\gamma'\subset\gamma$ such that 
\begin{equation}\label{t5a} q_1(x)=q_2(x),\quad x\in \Omega(\gamma',\tau).
\end{equation}
\end{Thm}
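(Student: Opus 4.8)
The plan is to turn the equality of spectral data into the standard inner--product identity of the Boundary Control method, and then to use approximate controllability together with finite speed of propagation to localise a unitary operator built from the two eigenbases.

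\emph{Step 1 (from spectral data to wave inner products).} Because an eigenfunction of $A_{q_j}$ with vanishing Cauchy data on the open set $\gamma$ must vanish identically --- apply Theorem~\ref{t3} to $u(t,x)=\cos(\sqrt{\lambda}\,t)\,\phi(x)$ (and its hyperbolic analogue when $\lambda\leq 0$) with $\tau\geq\diam(\Omega)$, so that $\Omega(\gamma,\tau)=\overline\Omega$ --- the trace map $\phi\mapsto\partial_\nu\phi|_\gamma$ is injective on each eigenspace. Grouping the elements of $B_{q_j}$ by their first coordinate then shows that $B_{q_1}=B_{q_2}$ forces $\lambda_k^{(1)}=\lambda_k^{(2)}=:\lambda_k$ for all $k$, the multiplicities to coincide, and the Hilbertian bases to be chosen so that $\psi_k:=\partial_\nu\phi_k^{(1)}|_\gamma=\partial_\nu\phi_k^{(2)}|_\gamma$. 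For $f\in\mathcal C^\infty_0((0,t)\times\gamma)$, multiplying \eqref{eq1} by $\phi_k^{(j)}$ and integrating by parts shows that $v_k^{f,j}(t):=\langle u^f_{q_j}(t),\phi_k^{(j)}\rangle_{L^2(\Omega)}$ solves
\[ (v_k^{f,j})''+\lambda_k\,v_k^{f,j}=-\int_\gamma f(t,\cdot)\,\psi_k\,\d S,\qquad v_k^{f,j}(0)=(v_k^{f,j})'(0)=0, \]
so $v_k^{f,j}$ is given by a Duhamel formula depending only on $\lambda_k$, $\psi_k$ and $f$; hence $v_k^{f,1}=v_k^{f,2}=:v_k^f$ and, by Parseval,
\[ \langle u^f_{q_1}(t),u^g_{q_1}(s)\rangle_{L^2(\Omega)}=\sum_{k\geq 1}v_k^f(t)\,v_k^g(s)=\langle u^f_{q_2}(t),u^g_{q_2}(s)\rangle_{L^2(\Omega)} \]
for all $f,g$ and $s,t$. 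Equivalently, the unitary operator $\iota$ of $L^2(\Omega)$ defined by $\iota\phi_k^{(1)}=\phi_k^{(2)}$ satisfies $\iota\,u^f_{q_1}(t)=u^f_{q_2}(t)$; and since $\iota$ preserves Fourier coefficients and the domains $\mathrm{Dom}(A_{q_j})$ are cut out by the same weighted $\ell^2$--condition on those coefficients, $\iota$ maps $\mathrm{Dom}(A_{q_1})$ onto $\mathrm{Dom}(A_{q_2})$ with $\iota A_{q_1}=A_{q_2}\iota$.

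\emph{Step 2 ($\iota$ is local).} Fix an open $S\subset\gamma$ and $s\in(0,T)$. By Theorem~\ref{t2} the waves $u^f_{q_j}(s)$, $f\in\mathcal C^\infty_0((0,s)\times S)$, lie in $L^2(\Omega(S,s))$, and by Corollary~\ref{c1} they are dense there; since $\Omega(S,s)$ is the same Euclidean set for $j=1,2$, the closed subspace they span is the same subspace $L^2(\Omega(S,s))$ of $L^2(\Omega)$. For $f\in\mathcal C^\infty_0((0,T)\times\gamma)$, the quantities $\|u^f_{q_1}(T)-u^{g}_{q_1}(s)\|$ and $\|u^f_{q_2}(T)-u^{g}_{q_2}(s)\|$ coincide for every $g$ by Step~1, so any sequence $(g_n)$ minimising one minimises the other; along such a sequence $u^{g_n}_{q_j}(s)$ converges to the orthogonal projection $\I_{\Omega(S,s)}\,u^f_{q_j}(T)$ for $j=1,2$, and applying the continuous operator $\iota$ (which sends $u^{g_n}_{q_1}(s)$ to $u^{g_n}_{q_2}(s)$) gives $\iota\,\I_{\Omega(S,s)}\,u^f_{q_1}(T)=\I_{\Omega(S,s)}\,\iota\,u^f_{q_1}(T)$. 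By density of $\{u^f_{q_1}(T)\}$ in $L^2(\Omega(\gamma,T))$, $\iota$ commutes with $\I_{\Omega(S,s)}$ there, hence with $\I_E$ for every $E$ in the algebra generated by the sets $\Omega(S,s)$, $S\subset\gamma$ open, $s\in(0,T)$. Now choose a small connected open $\gamma'\subset\gamma$ and a small $\tau\in(0,T)$. Using convexity one checks that closed balls centred on $\gamma'$ separate the points of $\Omega(\gamma',\tau)$ --- the only possible obstruction, a reflection across a flat piece of $\Gamma$, is excluded because $\Omega$ lies locally on one side of its tangent hyperplanes --- so that algebra generates the Borel $\sigma$--algebra of $\Omega(\gamma',\tau)$; by a monotone class argument $\iota$ then commutes with every multiplication operator on $L^2(\Omega(\gamma',\tau))$ and is therefore itself multiplication by some $m\in L^\infty(\Omega(\gamma',\tau))$, which is real (the $\phi_k^{(j)}$ are real) with $|m|=1$ a.e. (as $\iota$ is isometric), so $m\in\{1,-1\}$ a.e.

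\emph{Step 3 (recovering $q$).} Let $U$ be a connected component of $\{x\in\Omega:\dist(x,\gamma')<\tau\}$, a non-empty open subset of $\Omega(\gamma',\tau)$. For $h\in\mathcal C^\infty_0(U)$ we have $h\in\mathrm{Dom}(A_{q_1})$, so by Step~1 $mh=\iota h\in\mathrm{Dom}(A_{q_2})\subset H^1_0(\Omega)$; letting $h$ vary gives $m\in H^1_{\mathrm{loc}}(U)$, and together with $m^2=1$ a.e. this forces $\nabla m=0$, i.e. $m\equiv\varepsilon$ on $U$ with $\varepsilon\in\{1,-1\}$. Since moreover $A_{q_1}h=-\Delta h+q_1h$ is supported in $\supp h\subset\Omega(\gamma',\tau)$,
\[ \varepsilon\bigl(-\Delta h+q_1h\bigr)=m\,A_{q_1}h=\iota\bigl(A_{q_1}h\bigr)=A_{q_2}\bigl(\iota h\bigr)=A_{q_2}\bigl(\varepsilon h\bigr)=\varepsilon\bigl(-\Delta h+q_2h\bigr), \]
whence $(q_1-q_2)h=0$ for all $h\in\mathcal C^\infty_0(U)$, i.e. $q_1=q_2$ a.e. on $U$. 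As $\Omega(\gamma'',\tau')\subset U$ for suitably smaller $\gamma''\subset\gamma'$ and $\tau'<\tau$, relabelling gives \eqref{t5a}.

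\emph{The main obstacle.} The heart of the argument is Step~2: converting the purely algebraic equality of inner products into the locality statement that $\iota$ commutes with the indicator multiplications $\I_{\Omega(S,s)}$. This is the analytic core of the Boundary Control method; it relies on combining approximate controllability (Corollary~\ref{c1}, itself a consequence of Tataru's unique continuation) with finite speed of propagation (Theorem~\ref{t2}), and it is the only place where convexity is genuinely used, through the separation property of the domains of influence. A secondary technical point is justifying the convergence of the Parseval/Blagove\v{s}\v{c}enskii series --- in particular once $\partial_t^2 u^f$ or $A_{q_j}u^f$ is involved --- which is handled by taking $f$ sufficiently smooth.
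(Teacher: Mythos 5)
Your proof is correct, and while its first half coincides with the paper's, its second half takes a genuinely different route. Your Step~1 and the projection argument opening Step~2 are exactly the paper's Lemma~\ref{l4} and Lemma~\ref{l_innerp}: the Blagove\v{s}\v{c}enskii-type identity shows the data determine all inner products $\langle u^f(t),u^g(s)\rangle$, and minimizing sequences identify the orthogonal projections onto $L^2(\Omega(S,s))$ --- your statement ``$\iota$ commutes with $\I_{\Omega(S,s)}$'' is equivalent to the paper's inner-product identity on domains of influence. From there the paper localizes \emph{analytically}: it iterates the projections (Lemma~\ref{l_iinnerp}), shrinks sets of bounded eccentricity and applies the Lebesgue differentiation theorem to get the pointwise products of Lemma~\ref{l_ptprod}, strips off one factor with a geometric optics solution (Lemma~\ref{l_pt}), and finally identifies the eigenfunctions on $\Omega(\gamma',\tau)$. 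You instead localize \emph{operator-theoretically}: the projections $\I_{\Omega(S,s)}$ generate, via the point-separation property of Euclidean balls centred on $\gamma'$ (where convexity enters for you, just as it does for the paper's eccentricity argument), the full multiplication algebra of $L^2(\Omega(\gamma',\tau))$, so the intertwining unitary $\iota$ is multiplication by a unimodular $m$ there, and the relation $\iota A_{q_1}=A_{q_2}\iota$ plus elliptic regularity forces $m$ locally constant and $q_1=q_2$. This is the classical Gel'fand-style argument; it buys you freedom from the geometric optics construction and the Lebesgue differentiation machinery, at the price of the standard-Borel/commutant step and of needing the operator domains. One structural cost worth noting: the paper's Lemma~\ref{l_pt} (pointwise equality of the waves $u_1^f=u_2^f$ near $\gamma$) is reused in Section~4 to start the global step (Lemma~\ref{l8}), so if you adopt your local proof you must recover that equality afterwards from \eqref{t5a} by unique continuation, as is done in Lemma~\ref{l11}.

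Two small repairs. First, you should not assume the eigenfunctions are real: the bases may be complex, so you only get $|m|=1$ a.e.; but the same computation $-2\nabla m\cdot\nabla h+(\Delta m)h=(q_1-q_2)mh$ for all $h\in\mathcal C^\infty_0(U)$ still forces $\nabla m=0$ by varying $\nabla h$ at a point, so the conclusion survives. Second, the source term in your ODE for $v_k^{f,j}$ should carry $\overline{\psi_k}$; the sign is immaterial since both $j=1,2$ satisfy the same equation.
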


\subsection{From boundary spectral data to inner products of solutions}
We write $u_j^f$ for the solution of (\ref{eq1}) with $q=q_j$, $j=1,2$, and $T = 2\diam(\Omega) + 1$. 
Moreover, we denote by $\phi_{j,k}$, $k \in \N^*$, a fixed Hilbertian basis of eigenfunctions of $A_{q_j}$, $j=1,2$.
Let us begin by showing that the Fourier coefficients of $u_j^f(t) := u_j^f(t,\cdot)$, with respect to the bases $\phi_{j,k}$, $k \in \N^*$, coincide for $j=1$ and $j=2$.

\begin{lem}\label{l4} 
Suppose that $B_{q_1} = B_{q_2}$. 
Let $f\in H^1(\Sigma)$ satisfy $\supp(f)\subset (0,T]\times \gamma$. 
Then
\begin{align}\label{l4a}
\left\langle u_1^f(t),\phi_{1,k}\right\rangle_{L^2(\Omega)}=\left\langle u_2^f(t),\phi_{2,k}\right\rangle_{L^2(\Omega)}, \quad k \in \N^*, t\in [0,T].
\end{align}
\end{lem}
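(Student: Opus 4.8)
The plan is to expand the solution $u_j^f$ in the eigenbasis of $A_{q_j}$ via Duhamel's formula and show that the resulting expression for the Fourier coefficient $\langle u_j^f(t),\phi_{j,k}\rangle$ depends on the pair $(q_j,\phi_{j,k})$ only through the boundary spectral data $\mathrm{BSD}(q_j,\gamma)$. Concretely, for $f\in H^1(\Sigma)$ with $\supp(f)\subset(0,T]\times\gamma$, write $v_{j,k}(t)=\langle u_j^f(t),\phi_{j,k}\rangle_{L^2(\Omega)}$. Using the equation $\partial_t^2u_j^f-\Delta_x u_j^f+q_j u_j^f=0$, the boundary condition $u_j^f=f$ on $\Sigma$, Green's formula, and the facts $A_{q_j}\phi_{j,k}=\lambda_{j,k}\phi_{j,k}$, $\phi_{j,k}|_\Gamma=0$, one obtains the second-order ODE
\begin{equation}\label{plan-ode}
v_{j,k}''(t)+\lambda_{j,k}\,v_{j,k}(t)=-\int_\gamma f(t,x)\,\partial_\nu\phi_{j,k}(x)\,\d\sigma(x),
\end{equation}
with zero initial data $v_{j,k}(0)=v_{j,k}'(0)=0$ (the boundary integral is over $\gamma$ rather than all of $\Gamma$ because $f$ is supported in $(0,T]\times\gamma$). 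The $H^1$-regularity of $f$ together with the regularity statement $u_j^f\in\mathcal C([0,T];H^1(\Omega))\cap\mathcal C^1([0,T];L^2(\Omega))$ and $\partial_\nu u_j^f\in L^2(\Sigma)$ quoted from \cite{LLT} justifies these manipulations; one should pair the equation against $\phi_{j,k}$ and integrate by parts carefully, or argue first for smooth $f$ and pass to the limit.

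Next, solve \eqref{plan-ode} explicitly by the variation-of-constants (Duhamel) formula: writing $\omega_{j,k}=\sqrt{\lambda_{j,k}}$ when $\lambda_{j,k}>0$ (and using the obvious modification $t$ or $\sinh$ for non-positive eigenvalues — only finitely many occur), we get
\begin{equation}\label{plan-duhamel}
v_{j,k}(t)=-\int_0^t \frac{\sin\!\big(\omega_{j,k}(t-s)\big)}{\omega_{j,k}}\left(\int_\gamma f(s,x)\,\partial_\nu\phi_{j,k}(x)\,\d\sigma(x)\right)\d s.
\end{equation}
The right-hand side of \eqref{plan-duhamel} is built entirely from $\lambda_{j,k}$ and $\partial_\nu\phi_{j,k}|_\gamma$, i.e. from the element $(\lambda_{j,k},\partial_\nu\phi_{j,k}|_\gamma)\in\mathrm{BSD}(q_j,\gamma)$, and from the given source $f$. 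The hypothesis $B_{q_1}=B_{q_2}$ means these data sets coincide as sets; after fixing the enumeration so that $(\lambda_{1,k},\partial_\nu\phi_{1,k}|_\gamma)=(\lambda_{2,k},\partial_\nu\phi_{2,k}|_\gamma)$ for every $k$ (possible since the eigenvalues are listed in non-decreasing order with multiplicity, the only ambiguity being the choice of orthonormal basis within each eigenspace, which does not affect equality of the data as a set — one may need a short argument handling multiplicities, replacing $\phi_{j,k}$ within an eigenspace by a suitable unitary combination), formula \eqref{plan-duhamel} gives $v_{1,k}(t)=v_{2,k}(t)$ for all $k$ and all $t\in[0,T]$, which is exactly \eqref{l4a}.

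The main obstacle I anticipate is not the Duhamel computation but the bookkeeping around eigenvalue multiplicities: the statement $B_{q_1}=B_{q_2}$ is an equality of \emph{sets} of pairs, and to run the argument above cleanly one wants, for each eigenvalue $\lambda$, a single orthonormal basis of the (common) eigenspace such that the boundary traces $\partial_\nu\phi_{j,k}|_\gamma$ match term by term for $j=1,2$. One should check that equality of the data sets forces the two eigenspaces (for $q_1$ and $q_2$) to have the same dimension and that the trace maps $\phi\mapsto\partial_\nu\phi|_\gamma$ are injective on each eigenspace — the latter follows from unique continuation (Theorem~\ref{t3}) applied to the elliptic equation, or alternatively one can avoid injectivity by phrasing everything in terms of the spectral projections and the operator $f\mapsto\sum_k\big(\text{Duhamel kernel}\big)\langle\partial_\nu\phi_{j,k}|_\gamma,f(s,\cdot)\rangle\phi_{j,k}$, which is manifestly basis-independent on each eigenspace. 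A secondary technical point is the rigorous justification of Green's formula and the interchange of the series $\sum_k$ with the integral, handled by the $L^2$-convergence of the eigenfunction expansion of $u_j^f(t)\in L^2(\Omega)$ together with the energy bounds on $u_j^f$.
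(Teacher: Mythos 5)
Your proposal is correct and follows essentially the same route as the paper: project $u_j^f$ onto the eigenfunctions, integrate by parts to obtain the ODE $v_{j,k}''+\lambda_k v_{j,k}=\pm\int_\gamma f\,\overline{\partial_\nu\phi_{j,k}}\,\d\sigma$ with zero Cauchy data, observe that its right-hand side is determined by the boundary spectral data so that $v_{1,k}$ and $v_{2,k}$ solve the same problem, and conclude by uniqueness (the paper invokes ODE uniqueness directly rather than writing the Duhamel formula, and first treats smooth $f$ before extending to $H^1(\Sigma)$ by density, as you also suggest). The multiplicity bookkeeping you flag is handled implicitly in the paper by reading $B_{q_1}=B_{q_2}$ as a termwise equality of the indexed families, so it is not an additional obstacle there.
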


\begin{proof} 
Write $B_{q_j} = \left\{(\lambda_k,\psi_k):\ k\in\N^*\right\}$, $j=1,2$.
We start by assuming that $f\in\mathcal C^\infty_0((0,T]\times\gamma)$. Then $u_j^f\in H^2(Q)$. Setting
$v_{j,k}(t):=\left\langle u_j^f(t),\phi_{j,k}\right\rangle_{L^2(\Omega)}$
and integrating by parts we find
$$v_{j,k}''(t)=\left\langle \p_t^2u_j^f(t),\phi_{j,k}\right\rangle_{L^2(\Omega)}=-\left\langle (-\Delta_x+q_j)u_j^f(t),\phi_{j,k}\right\rangle_{L^2(\Omega)}=-\lambda_{k}v_{j,k}(t)+\int_{\p\Omega}f(t,x)\overline{\p_\nu\phi_{j,k}(x)}d\sigma(x).$$
As $\supp(f)\subset (0,T]\times \gamma$, we deduce that both $v_{j,k}$, $j=1,2$, solve the same differential equation
\begin{align}\label{def_vk}
v_{k}''(t)+\lambda_kv_{k}(t)=\int_{\gamma}f(t,x)\overline{\psi_k(x)}d\sigma(x),\quad v_{k}(0)=v_{k}'(0)=0,
\end{align}
which implies \eqref{l4a}. By density, \eqref{l4a} holds also for $f\in H^1(\Sigma)$ satisfying $\supp(f)\subset (0,T]\times \gamma$.

\end{proof}

Under the assumptions of Lemma \ref{l4}, it holds in particular that 
\begin{align}
\label{id1}
\norm{u_1^f(t)}_{L^2(\Omega)}^2 &=\norm{u_2^f(t)}_{L^2(\Omega)}^2 = \sum_{k=1}^\infty |v_k^f(t)|^2, \quad t \in [0,T],
\\\label{id2}
\left\langle u_1^{f}(t),u_1^{g}(s)\right\rangle_{L^2(\Omega)}&=\left\langle u_2^{f}(t),u_2^{g}(s)\right\rangle_{L^2(\Omega)}=\sum_{k=1}^\infty v_k^f(t)\overline{v_k^g(s)}, \quad t, s \in [0,T],
\end{align}
where $v_k^f = v_k$ is the solution of (\ref{def_vk}).

\subsection{Inner products on domains of influence}
We denote by $\I_S$ the indicator function of a set $S$, that is, 
$\I_S(x) = 1$ if $x \in S$ and $\I_S(x) = 0$ otherwise.
Let us show that (\ref{id2}) holds when
$\Omega$ is replaced by a domain of influence $\Omega(\gamma',\tau)$
in the following sense

\begin{lem}
\label{l_innerp}
Suppose that $B_{q_1} = B_{q_2}$. 
Let $f,g\in H^1(\Sigma)$ supported in $(0,T]\times \gamma$,
let $\gamma' \subset \gamma$ be open, and let $\tau \in (0,T]$.
Then
\begin{align}\notag %\label{}
\left\langle \I_{\Omega(\gamma',\tau)} u_1^{f}(t),u_1^{g}(s)\right\rangle_{L^2(\Omega)}=\left\langle \I_{\Omega(\gamma',\tau)}  u_2^{f}(t),u_2^{g}(s)\right\rangle_{L^2(\Omega)}, \quad t,s\in [0,T].
\end{align}
\end{lem}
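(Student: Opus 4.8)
The plan is to realize $\I_{\Omega(\gamma',\tau)}$ as an orthogonal projection in $L^2(\Omega)$ that can be approximated, when applied to a wave, by boundary controls, and then to feed the identities \eqref{id1}--\eqref{id2} into this approximation. Indeed, multiplication by $\I_{\Omega(\gamma',\tau)}$ is exactly the orthogonal projection of $L^2(\Omega)$ onto the closed subspace $L^2(\Omega(\gamma',\tau))$ of functions vanishing a.e.\ outside $\Omega(\gamma',\tau)$; and by Corollary \ref{c1} applied with $S=\gamma'$, this subspace is the closure in $L^2(\Omega)$ of $\mathcal U_j:=\{u_j^h(\tau,\cdot):\ h\in\mathcal C^\infty_0((0,\tau)\times\gamma')\}$, for each $j=1,2$. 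Note that every such $h$ belongs to $H^1(\Sigma)$ and is supported in $(0,T]\times\gamma$ (since $(0,\tau)\times\gamma'\subset(0,T]\times\gamma$), so Lemma \ref{l4} and hence \eqref{id1}--\eqref{id2} apply to it.

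Next I would introduce, for a fixed $s\in[0,T]$, the least-squares functional
$$J_j(h):=\norm{u_j^g(s)-u_j^h(\tau)}_{L^2(\Omega)}^2,\qquad h\in\mathcal C^\infty_0((0,\tau)\times\gamma').$$
Expanding the square gives
$$J_j(h)=\norm{u_j^g(s)}_{L^2(\Omega)}^2-2\,\mathrm{Re}\left\langle u_j^g(s),u_j^h(\tau)\right\rangle_{L^2(\Omega)}+\norm{u_j^h(\tau)}_{L^2(\Omega)}^2,$$
and by \eqref{id1}--\eqref{id2} each of the three terms on the right is the same for $j=1$ and $j=2$; hence $J_1=J_2=:J$. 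By the standard Hilbert space argument (parallelogram law and uniqueness of the nearest point in the closed subspace $\overline{\mathcal U_j}=L^2(\Omega(\gamma',\tau))$), a minimizing sequence $(h_m)\subset\mathcal C^\infty_0((0,\tau)\times\gamma')$ for $J$ satisfies $u_j^{h_m}(\tau)\to\I_{\Omega(\gamma',\tau)}u_j^g(s)$ in $L^2(\Omega)$ \emph{simultaneously} for $j=1$ and $j=2$.

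To conclude, I would use that $\I_{\Omega(\gamma',\tau)}$ is self-adjoint to write
$$\left\langle \I_{\Omega(\gamma',\tau)}u_1^f(t),u_1^g(s)\right\rangle_{L^2(\Omega)}=\left\langle u_1^f(t),\I_{\Omega(\gamma',\tau)}u_1^g(s)\right\rangle_{L^2(\Omega)}=\lim_{m\to\infty}\left\langle u_1^f(t),u_1^{h_m}(\tau)\right\rangle_{L^2(\Omega)},$$
then invoke \eqref{id2} (legitimate since $f$ and each $h_m$ are supported in $(0,T]\times\gamma$ and $\tau\in(0,T]$) to replace the last limit by $\lim_{m\to\infty}\langle u_2^f(t),u_2^{h_m}(\tau)\rangle_{L^2(\Omega)}$, and finally run the first two equalities backwards with $j=2$ to land on $\langle \I_{\Omega(\gamma',\tau)}u_2^f(t),u_2^g(s)\rangle_{L^2(\Omega)}$. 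This is the asserted identity; the general case $f,g\in H^1(\Sigma)$ supported in $(0,T]\times\gamma$ needs no extra approximation, as \eqref{id1}--\eqref{id2} are already stated for $H^1$ sources.

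The only delicate point is the coupling of the two operators: a priori the control sequence approximating the projection depends on which potential $q_j$ is used, and the crux is that the relevant least-squares functional $J_j$ is built solely out of inner products of solutions, which by Lemma \ref{l4} do not depend on $j$ — this is precisely what lets a single control sequence serve both $A_{q_1}$ and $A_{q_2}$. Everything else is Corollary \ref{c1} together with routine Hilbert space manipulations.
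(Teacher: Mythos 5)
Your proposal is correct and follows essentially the same route as the paper: both arguments hinge on the observation that the least-squares distance to the reachable set $\{u_j^h(\tau)\}$ is built entirely from inner products of solutions, hence independent of $j$ by Lemma \ref{l4}, so a single minimizing control sequence realizes the orthogonal projection $\I_{\Omega(\gamma',\tau)}$ simultaneously for both potentials, after which one passes to the limit in \eqref{id2}. The only cosmetic differences are that you project $u^g(s)$ instead of $u^f(t)$ and invoke self-adjointness of the projection at the end.
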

\begin{proof}
By Corollary \ref{c1}, there is a sequence $(f_k)_{k\in\N^*}$ in $\mathcal C_0^\infty((0,\tau) \times \gamma')$ such that $u_1^{f_k}(\tau)$ converges to $\I_{\Omega(\gamma',\tau)} u_1^{f}(t)$ in $L^2(\Omega)$ as $k \to +\infty$. As $\I_{\Omega(\gamma',\tau)} u_1^{f}(t)$ is the orthogonal projection of $u_1^{f}(t)$ into the subspace $L^2(\Omega(\gamma',\tau))$, it holds, using again the density in Corollary \ref{c1}, that 
$$
\lim_{k\to+\infty}\norm{u_1^{f_{k}}(\tau)-u_1^f(t)}_{L^2(\Omega)}=
\inf_{h\in\mathcal C^\infty_0((0,\tau)\times \gamma')}\norm{u_1^{h}(\tau)-u_1^f(t)}_{L^2(\Omega)}.
$$
Now (\ref{id2}) implies that 
\begin{equation*}
\lim_{k\to+\infty}\norm{u_2^{f_{k}}(\tau)-u_2^f(t)}_{L^2(\Omega)}=\lim_{k\to+\infty}\norm{u_1^{f_{k}}(\tau)-u_1^f(t)}_{L^2(\Omega)} =
\inf_{h\in\mathcal C^\infty_0((0,\tau)\times \gamma')}\norm{u_1^{h}(\tau)-u_1^f(t)}_{L^2(\Omega)}
\end{equation*}
and also that for any $h\in\mathcal C^\infty_0((0,\tau)\times \gamma')$
\begin{equation*}
\norm{u_1^{h}(\tau)-u_1^f(t)}_{L^2(\Omega)} =\norm{u_2^{h}(\tau)-u_2^f(t)}_{L^2(\Omega)}.
\end{equation*}
The last two equalities imply
\begin{equation} \label{u2fk_conv}
\lim_{k\to+\infty}\norm{u_2^{f_{k}}(\tau)-u_2^f(t)}_{L^2(\Omega)}=
\inf_{h\in\mathcal C^\infty_0((0,\tau)\times \gamma')}\norm{u_2^{h}(\tau)-u_2^f(t)}_{L^2(\Omega)}.
\end{equation}
The convergence of $(u_1^{f_k}(\tau))_{k\in\N^*}$ and (\ref{id1})
imply that $(u_2^{f_k}(\tau))_{k\in\N^*}$ is a Cauchy sequence, and therefore it converges. Corollary \ref{c1} and (\ref{u2fk_conv}) imply that
$u_2^{f_k}(\tau)$ converges to $\I_{\Omega(\gamma',\tau)} u_2^{f}(t)$. Finally, by (\ref{id2}),
\begin{align}\notag %\label{}
\left\langle \I_{\Omega(\gamma',\tau)} u_1^{f}(t),u_1^{g}(s)\right\rangle_{L^2(\Omega)}
&= \lim_{k\to+\infty}
\left\langle u_1^{f_k}(\tau),u_1^{g}(s)\right\rangle_{L^2(\Omega)}
= \lim_{k\to+\infty}
\left\langle u_2^{f_k}(\tau),u_2^{g}(s)\right\rangle_{L^2(\Omega)}
\\\notag&=\left\langle \I_{\Omega(\gamma',\tau)}  u_2^{f}(t),u_2^{g}(s)\right\rangle_{L^2(\Omega)}.
\end{align}

\end{proof}

From this result, we deduce the following corollary.

\begin{cor}
\label{c_innerp}
Suppose that $B_{q_1} = B_{q_2}$. 
Let $f,g\in H^1(\Sigma)$ be supported in $(0,T]\times \gamma$,
and let $x \in \gamma$ and $\tau \in (0,T]$.
Then

\bel{ccc}
\left\langle \I_{B(x,\tau)} u_1^{f}(t),u_1^{g}(s)\right\rangle_{L^2(\Omega)}=\left\langle \I_{B(x,\tau)}  u_2^{f}(t),u_2^{g}(s)\right\rangle_{L^2(\Omega)}, \quad t,s \in [0,T].\ee
\end{cor}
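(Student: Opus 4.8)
\emph{Proof idea.} The plan is to realize the ball $B(x,\tau)$, up to a Lebesgue‑null set, as a decreasing intersection of domains of influence $\Omega(\gamma',\tau)$ with $\gamma'$ running over small open neighbourhoods of $x$ in $\gamma$, and then to pass to the limit in Lemma~\ref{l_innerp}.

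First I would fix the approximating sets. Since $x\in\gamma$ and $\gamma$ is open in $\Gamma$, for each $m\in\N^*$ the set $\gamma_m:=\gamma\cap\{y\in\Gamma:\ |y-x|<1/m\}$ is a non‑empty open subset of $\Gamma$ contained in $\gamma$. As $\gamma_{m+1}\subset\gamma_m$, the functions $\dist(\,\cdot\,,\gamma_m)$ are non‑decreasing in $m$, so the sets $\Omega(\gamma_m,\tau)$ are non‑increasing in $m$. Moreover, taking $z=x$ gives $\dist(y,\gamma_m)\leq|y-x|$ for every $y\in\overline\Omega$, while the triangle inequality gives $\dist(y,\gamma_m)\geq|y-x|-1/m$, so $\dist(y,\gamma_m)\to|y-x|$ as $m\to\infty$. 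Since then ``$\dist(y,\gamma_m)\leq\tau$ for all $m$'' is equivalent to ``$|y-x|\leq\tau$'', one obtains
\begin{equation*}
\bigcap_{m\in\N^*}\Omega(\gamma_m,\tau)=K:=\{y\in\overline\Omega:\ |y-x|\leq\tau\},
\end{equation*}
and consequently $\I_{\Omega(\gamma_m,\tau)}(y)\to\I_{K}(y)$ for every $y\in\overline\Omega$.

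Next I would pass to the limit. Fix $t,s\in[0,T]$. Since $u_j^f(t),u_j^g(s)\in L^2(\Omega)$, the function $\abs{u_j^f(t)\,\overline{u_j^g(s)}}$ lies in $L^1(\Omega)$ and dominates $\abs{\I_{\Omega(\gamma_m,\tau)}u_j^f(t)\,\overline{u_j^g(s)}}$, so dominated convergence yields, for $j=1,2$,
\begin{equation*}
\lim_{m\to\infty}\left\langle\I_{\Omega(\gamma_m,\tau)}u_j^f(t),u_j^g(s)\right\rangle_{L^2(\Omega)}=\int_{K}u_j^f(t)\,\overline{u_j^g(s)}\,\d y=\left\langle\I_{B(x,\tau)}u_j^f(t),u_j^g(s)\right\rangle_{L^2(\Omega)},
\end{equation*}
the last equality because the sphere $\{y:\ |y-x|=\tau\}$ is Lebesgue‑null, hence $\I_K=\I_{B(x,\tau)}$ in $L^2(\Omega)$. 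On the other hand, Lemma~\ref{l_innerp} applied with $\gamma'=\gamma_m$ (admissible, as $\gamma_m\subset\gamma$ is open and $\tau\in(0,T]$) gives $\langle\I_{\Omega(\gamma_m,\tau)}u_1^f(t),u_1^g(s)\rangle_{L^2(\Omega)}=\langle\I_{\Omega(\gamma_m,\tau)}u_2^f(t),u_2^g(s)\rangle_{L^2(\Omega)}$ for every $m$. Letting $m\to\infty$ and combining with the previous display proves \eqref{ccc}.

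I do not expect a genuine obstacle: the argument is a routine approximation resting only on Lemma~\ref{l_innerp} together with elementary properties of the Euclidean distance. The two points deserving a line of care are the identification of $\bigcap_m\Omega(\gamma_m,\tau)$ with the closed ball centred at $x$ — here it is used that Definition~\ref{d1} measures $\dist$ with the ambient Euclidean metric, which under assumption~(A) is the same as the intrinsic one — and the harmless replacement of that closed ball by the open ball $B(x,\tau)$, the two sets differing only by a null set.
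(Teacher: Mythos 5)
Your proposal is correct and follows essentially the same route as the paper: the paper likewise approximates $B(x,\tau)$ by the domains of influence $\Omega(\gamma_\epsilon,\tau)$ of shrinking neighbourhoods $\gamma_\epsilon=\{y\in\gamma:\ |x-y|<\epsilon\}$ of $x$, applies Lemma~\ref{l_innerp} to each, and passes to the limit. The only (harmless) difference is in the limit passage: you invoke dominated convergence with the $L^1$ majorant $|u_j^f(t)\overline{u_j^g(s)}|$, whereas the paper bounds the error term $\langle \I_{\Omega_\epsilon}u_j^f(t),u_j^g(s)\rangle$ via the Sobolev embedding $H^1\hookrightarrow L^p$ and H\"older's inequality, obtaining a quantitative rate in $|\Omega_\epsilon|$ that is not actually needed here.
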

\begin{proof} For $\epsilon>0$, we fix $$\gamma_\epsilon:=\{y\in\gamma:|x-y|<\epsilon\},\quad \Omega_\epsilon=\Omega(\gamma_\epsilon,\tau)\setminus B(x,\tau).$$
For $j=1,2$, since $u_j^f\in\mathcal C([0,T];H^1(\Omega))$, by the Sobolev embedding theorem for
$$p:=\left\{\begin{array}{l} 3\quad \textrm{for }n=2\\ \frac{2n}{n-2}\quad \textrm{for }n\geq3\end{array}\right.$$
we have $u_j^f\in\mathcal C([0,T];L^p(\Omega))$. Thus, an application of the H\"older inequality yields
$$\abs{\left\langle \I_{\Omega_\epsilon} u_j^{f}(t),u_j^{g}(s)\right\rangle_{L^2(\Omega)}}\leq |\Omega_\epsilon|^{\frac{p-2}{2p}}\norm{u_j^{f}}_{\mathcal C([0,T];L^p(\Omega))}\norm{u_j^{g}}_{\mathcal C([0,T];L^2(\Omega))}.$$
Thus, for $j=1,2$, we have 
\bel{ccc1}\lim_{\epsilon\to0}\left(\left\langle \I_{\Omega(\gamma_\epsilon,\tau)} u_j^{f}(t),u_j^{g}(s)\right\rangle_{L^2(\Omega)}-\left\langle \I_{B(x,\tau)} u_1^{f}(t),u_1^{g}(s)\right\rangle_{L^2(\Omega)}\right)=\lim_{\epsilon\to0}\left\langle \I_{\Omega_\epsilon} u_j^{f}(t),u_j^{g}(s)\right\rangle_{L^2(\Omega)}=0.\ee
On the other hand, Lemma \ref{l_innerp} implies 
$$\left\langle \I_{\Omega(\gamma_\epsilon,\tau)} u_1^{f}(t),u_1^{g}(s)\right\rangle_{L^2(\Omega)}=\left\langle \I_{\Omega(\gamma_\epsilon,\tau)}  u_2^{f}(t),u_2^{g}(s)\right\rangle_{L^2(\Omega)}, \quad t,s\in [0,T], \epsilon>0.$$
Combining this with \eqref{ccc1} and sending $\epsilon\to0$, we deduce \eqref{ccc}.

\end{proof}
The proof of Lemma \ref{l_innerp} can be iterated as follows.

\begin{lem}
\label{l_iinnerp}
Suppose that $B_{q_1} = B_{q_2}$. 
Let $f,g\in H^1(\Sigma)$ be supported in $(0,T]\times \gamma$,
let $\gamma',\gamma'' \subset \gamma$ be open, and let $\tau',\tau'' \in (0,T]$.
Then
\begin{align}\notag %\label{}
\left\langle \I_{\Omega(\gamma',\tau')} \I_{\Omega(\gamma'',\tau'')} u_1^{f}(t),u_1^{g}(s)\right\rangle_{L^2(\Omega)}=\left\langle \I_{\Omega(\gamma',\tau')} \I_{\Omega(\gamma'',\tau'')}  u_2^{f}(t),u_2^{g}(s)\right\rangle_{L^2(\Omega)}, \quad t, s \in [0,T].
\end{align}
\end{lem}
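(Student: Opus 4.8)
The plan is to mimic the proof of Lemma \ref{l_innerp}, but to approximate the \emph{double} projection $\I_{\Omega(\gamma',\tau')} \I_{\Omega(\gamma'',\tau'')} u_1^f(t)$ by a limit of solutions evaluated at a single time, and then transfer the statement to the problem with potential $q_2$ using the already-established identities. Concretely, I would first invoke Lemma \ref{l_innerp} with $\gamma''$, $\tau''$ in place of $\gamma'$, $\tau$, to get that the projection onto $L^2(\Omega(\gamma'',\tau''))$ behaves the same for $j=1$ and $j=2$ on inner products; in particular, $\I_{\Omega(\gamma'',\tau'')} u_j^f(t)$ is the $L^2(\Omega)$-limit of solutions $u_j^{f_k}(\tau'')$ with $f_k \in \mathcal C_0^\infty((0,\tau'')\times\gamma'')$, and by (\ref{id1}) the convergence for $j=1$ forces the same Cauchy/convergence structure for $j=2$.

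Next, I would apply Corollary \ref{c1} a second time: since $u_1^{f_k}(\tau'') \to \I_{\Omega(\gamma'',\tau'')} u_1^f(t)$ in $L^2(\Omega)$, and each $u_1^{f_k}(\tau'')$ can itself be approximated (in the sense of orthogonal projection onto $L^2(\Omega(\gamma',\tau'))$) by solutions $u_1^{g_{k,m}}(\tau')$ with $g_{k,m} \in \mathcal C_0^\infty((0,\tau')\times\gamma')$, a diagonal argument produces a sequence $h_\ell \in \mathcal C_0^\infty((0,\max(\tau',\tau''))\times\gamma)$ — more carefully, one uses solutions at time $\tau'$ with sources in $\gamma'$ applied to the controls $f_k$, i.e. one repeats the variational characterization
$$
\lim_{\ell\to\infty}\norm{u_1^{h_\ell}(\tau') - \I_{\Omega(\gamma'',\tau'')}u_1^f(t)}_{L^2(\Omega)} = \inf_{h\in\mathcal C_0^\infty((0,\tau')\times\gamma')}\norm{u_1^{h}(\tau') - \I_{\Omega(\gamma'',\tau'')}u_1^f(t)}_{L^2(\Omega)},
$$
and the right-hand side equals $\norm{\I_{\Omega(\gamma',\tau')}\I_{\Omega(\gamma'',\tau'')}u_1^f(t) - \I_{\Omega(\gamma'',\tau'')}u_1^f(t)}$ since $u_1^{h}(\tau')$ ranges over a dense subset of $L^2(\Omega(\gamma',\tau'))$. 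The key point enabling the transfer is that every inner product appearing — $\langle u_1^{h_\ell}(\tau'), u_1^{h_{\ell'}}(\tau')\rangle$, $\langle u_1^{h_\ell}(\tau'), \I_{\Omega(\gamma'',\tau'')}u_1^f(t)\rangle$, etc. — is, thanks to Lemma \ref{l_innerp} (with $\gamma'',\tau''$) and (\ref{id2}), equal to the corresponding inner product for $j=2$. Hence the same variational problem, with the same infimum, holds for $j=2$, so $u_2^{h_\ell}(\tau')$ converges; by (\ref{id1}) it is Cauchy, and its limit must be $\I_{\Omega(\gamma',\tau')}\I_{\Omega(\gamma'',\tau'')}u_2^f(t)$. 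Passing to the limit in (\ref{id2}) applied with sources $h_\ell$ and $g$ then yields the claimed identity.

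The main obstacle is bookkeeping the nested limits cleanly: one is approximating $\I_{\Omega(\gamma',\tau')}\I_{\Omega(\gamma'',\tau'')}u^f(t)$, which is \emph{not} a priori of the form $\I_{\Omega(S,\sigma)}u^h(s)$ for a single $S,\sigma$, so one cannot apply Lemma \ref{l_innerp} to it directly. The fix is to treat $w_j := \I_{\Omega(\gamma'',\tau'')}u_j^f(t)$ as a fixed target vector in $L^2(\Omega)$ for which the inner products $\langle w_1, u_1^h(s)\rangle = \langle w_2, u_2^h(s)\rangle$ are known (this is exactly Lemma \ref{l_innerp}), and then the abstract lemma "if $\langle w_1, u_1^h(\tau')\rangle = \langle w_2, u_2^h(\tau')\rangle$ for all admissible $h$ and $\langle u_1^h(\tau'),u_1^{h'}(\tau')\rangle = \langle u_2^h(\tau'),u_2^{h'}(\tau')\rangle$, then the orthogonal projections of $w_j$ onto $\overline{\{u_j^h(\tau')\}} = L^2(\Omega(\gamma',\tau'))$ have matching inner products against all $u_j^g(s)$" — which is precisely the content already proved in Lemma \ref{l_innerp} with $u^f(t)$ replaced by $w$ — applies verbatim. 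So in fact the cleanest write-up is: apply Lemma \ref{l_innerp} once to replace $u_j^f(t)$ by $\I_{\Omega(\gamma'',\tau'')}u_j^f(t)$ inside all inner products, and then run the proof of Lemma \ref{l_innerp} again with $\gamma',\tau'$ and with this new vector in the role of $u^f(t)$, the only thing to check being that the new vector still satisfies the hypotheses used there (namely, that $\langle \I_{\Omega(\gamma'',\tau'')}u_1^f(t), u_1^g(s)\rangle = \langle \I_{\Omega(\gamma'',\tau'')}u_2^f(t), u_2^g(s)\rangle$ for all admissible $g$, which is Lemma \ref{l_innerp}, and the norm identity (\ref{id1}) for the iterates, which is unchanged). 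This reduces the proof to two citations of earlier results plus one remark that the argument of Lemma \ref{l_innerp} only used the target vector through those two properties.

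\begin{proof}
By Lemma \ref{l_innerp} applied with $\gamma''$ and $\tau''$, for all $h\in H^1(\Sigma)$ supported in $(0,T]\times\gamma$ and all $s\in[0,T]$,
\begin{align}\notag
\left\langle \I_{\Omega(\gamma'',\tau'')}u_1^{f}(t),u_1^{h}(s)\right\rangle_{L^2(\Omega)}=\left\langle \I_{\Omega(\gamma'',\tau'')}u_2^{f}(t),u_2^{h}(s)\right\rangle_{L^2(\Omega)}.
\end{align}
Set $w_j:=\I_{\Omega(\gamma'',\tau'')}u_j^f(t)\in L^2(\Omega)$, $j=1,2$. The proof of Lemma \ref{l_innerp} used the vector $u_j^f(t)$ only through two facts: first, its inner products $\langle u_1^f(t),u_1^h(s)\rangle=\langle u_2^f(t),u_2^h(s)\rangle$ for all admissible $h$ (a consequence of (\ref{id2})); and second, the identity (\ref{id1}) for the iterates $u_j^{f_k}(\tau)$, which does not involve $u_j^f(t)$ at all. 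The vector $w_j$ satisfies the first fact by the displayed identity above, and (\ref{id1}) is unchanged. Hence, repeating verbatim the argument of Lemma \ref{l_innerp} with $\gamma',\tau'$ in place of $\gamma,\tau$ and with $w_j$ in place of $u_j^f(t)$, we obtain a sequence $(h_k)_{k\in\N^*}$ in $\mathcal C_0^\infty((0,\tau')\times\gamma')$ such that $u_1^{h_k}(\tau')$ converges to the orthogonal projection of $w_1$ onto $L^2(\Omega(\gamma',\tau'))$, namely to $\I_{\Omega(\gamma',\tau')}w_1=\I_{\Omega(\gamma',\tau')}\I_{\Omega(\gamma'',\tau'')}u_1^f(t)$, and likewise $u_2^{h_k}(\tau')$ converges to $\I_{\Omega(\gamma',\tau')}\I_{\Omega(\gamma'',\tau'')}u_2^f(t)$. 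Passing to the limit $k\to+\infty$ in the identity
\begin{align}\notag
\left\langle u_1^{h_k}(\tau'),u_1^{g}(s)\right\rangle_{L^2(\Omega)}=\left\langle u_2^{h_k}(\tau'),u_2^{g}(s)\right\rangle_{L^2(\Omega)},
\end{align}
which holds by (\ref{id2}), yields
\begin{align}\notag
\left\langle \I_{\Omega(\gamma',\tau')}\I_{\Omega(\gamma'',\tau'')}u_1^{f}(t),u_1^{g}(s)\right\rangle_{L^2(\Omega)}=\left\langle \I_{\Omega(\gamma',\tau')}\I_{\Omega(\gamma'',\tau'')}u_2^{f}(t),u_2^{g}(s)\right\rangle_{L^2(\Omega)}
\end{align}
for all $t,s\in[0,T]$, as claimed.
\end{proof}
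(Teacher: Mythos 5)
Your proof is correct and follows essentially the same strategy as the paper's, which likewise iterates the argument of Lemma \ref{l_innerp}; the only difference is that the paper approximates the inner projection $\I_{\Omega(\gamma'',\tau'')}u_j^f(t)$ by solutions $u_j^{f_k}(\tau'')$ for both $j$ and then applies Lemma \ref{l_innerp} as a black box to each $f_k$ before passing to the limit, whereas you apply the lemma first and re-run its proof for the outer projection with the new target $w_j$. One small point to make explicit when you claim the argument repeats verbatim: the variational step $\norm{u_1^{h}(\tau')-w_1}_{L^2(\Omega)}=\norm{u_2^{h}(\tau')-w_2}_{L^2(\Omega)}$ also needs $\norm{w_1}_{L^2(\Omega)}=\norm{w_2}_{L^2(\Omega)}$, which is not literally covered by your ``two facts'' (the target is not itself of the form $u_j^h(s)$) but follows from Lemma \ref{l_innerp} with $g=f$ and $s=t$, since $\I_{\Omega(\gamma'',\tau'')}^2=\I_{\Omega(\gamma'',\tau'')}$.
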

\begin{proof}
By the proof of Lemma \ref{l_innerp},
there is a sequence $(f_k)_{k\in\N^*}$ in $\mathcal C_0^\infty((0,\tau'') \times \gamma'')$ such that for both $j=1$ and $j=2$,
the functions $u_j^{f_k}(\tau'')$ converge to $\I_{\Omega(\gamma'',\tau'')} u_j^{f}(t)$ in $L^2(\Omega)$ as $k \to +\infty$. 
Then by Lemma \ref{l_innerp},
\begin{align}\notag %\label{}
&\left\langle \I_{\Omega(\gamma',\tau')} \I_{\Omega(\gamma'',\tau'')} u_1^{f}(t),u_1^{g}(s)\right\rangle_{L^2(\Omega)}
= \lim_{k\to+\infty}
\left\langle \I_{\Omega(\gamma',\tau')} u_1^{f_k}(t),u_1^{g}(s)\right\rangle_{L^2(\Omega)}
\\\notag&\quad= \lim_{k\to+\infty}
\left\langle \I_{\Omega(\gamma',\tau')} u_2^{f_k}(t),u_2^{g}(s)\right\rangle_{L^2(\Omega)}
=\left\langle \I_{\Omega(\gamma',\tau')} \I_{\Omega(\gamma'',\tau'')}  u_2^{f}(t),u_2^{g}(s)\right\rangle_{L^2(\Omega)}.
\end{align}
\end{proof}

\subsection{Recovery of internal data near $\gamma$}

Let $x \in \Omega$ and let $y$ be one of the closest point in $\p \Omega$ to $x$.
Then the line through $x$ and $y$ must intersect $\p \Omega$ perpendicularly. Conversely, a point $y \in \p \Omega$ is the closest point in $\p \Omega$ to $x = y - r \nu(y)$ for small $r > 0$.
Here $\nu(y)$ is the outward unit normal vector at $y$.
Furthermore, there is $\tau_0>0$ such that 
$$
r = \dist(y - r \nu(y), \p \Omega), \quad r \in [0,\tau_0],\ y \in \p\Omega.
$$
We will show that Theorem \ref{t5} holds with any choice of $\tau > 0$ and $\gamma' \subset \gamma$ satisfying 
\bel{geo}\Omega(\gamma',\tau)\subset\{y-r\nu(y):\ r\in [0,\tau_0),\  y\in\gamma\}.\ee
This hypothesis as well as the set $N(\gamma)$ introduced in the following lemma are illustrated in Fig.~\ref{FigureCondGeo}.

\medskip
\begin{figure}[h]
\includegraphics[width=.5\textwidth]{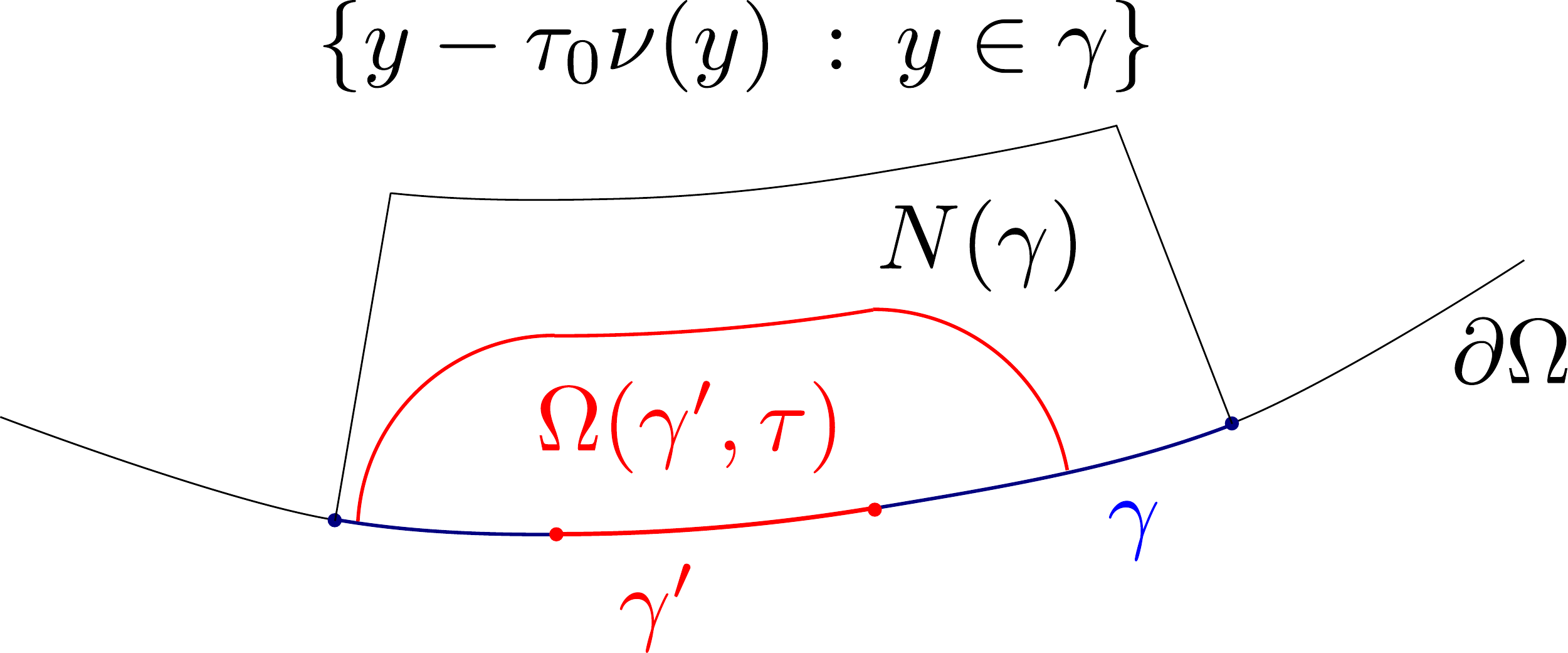}
\caption{\label{FigureCondGeo} Geometric condition~\eqref{geo}}
\end{figure}

We show next that inner products on domains of influence can be used to determine the following pointwise products 

\begin{lem}
\label{l_ptprod}
Suppose that $B_{q_1} = B_{q_2}$. 
Let $f,g\in H^1(\Sigma)$ be supported in $(0,T]\times \gamma$.
Then
\begin{align}\notag %\label{}
u_1^{f}(t,x) \overline{u_1^{g}(s,x)}
= u_2^{f}(t,x) \overline{u_2^{g}(s,x)}, \quad t,s \in [0,T],\ x \in N(\gamma),
\end{align}
where $N(\gamma) = \{y-r\nu(y):\ r\in(0,\tau_0),\  y\in\gamma\}$.
\end{lem}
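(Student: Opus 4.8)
The plan is to recover the pointwise product $u_j^f(t,x)\overline{u_j^g(s,x)}$ at a point $x = y - r\nu(y) \in N(\gamma)$ by shrinking the intersection of two domains of influence down to the single point $x$, while keeping the inner products under control via Lemma~\ref{l_iinnerp}. Fix $y \in \gamma$ and $r \in (0,\tau_0)$, so $x = y - r\nu(y)$ and $r = \dist(x,\p\Omega) = \dist(x,\gamma_\epsilon)$ for all small $\epsilon > 0$, where $\gamma_\epsilon = \{z \in \gamma:\ |z-y| < \epsilon\}$. The first step is to choose a second portion of the boundary. Since $\Omega$ is convex, for $\delta > 0$ small the ball $B(x,r+\delta)$ meets $\p\Omega$ in a small cap $\gamma''_\delta$ around $y$ contained in $\gamma$, and one has the localization
\begin{equation}\notag
\bigcap_{\epsilon,\delta > 0} \left( \Omega(\gamma_\epsilon, r+\delta) \cap \Omega(\gamma''_\delta, \delta) \right) = \{x\};
\end{equation}
more precisely, the sets $S_{\epsilon,\delta} := \Omega(\gamma_\epsilon, r+\delta)\cap \Omega(\gamma''_\delta,\delta)$ shrink to $\{x\}$ as $\epsilon,\delta \to 0$, by the elementary geometry of a convex domain near a normal segment (the same argument that gave $r = \dist(y-r\nu(y),\p\Omega)$ on $[0,\tau_0]$). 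The point of using two domains of influence rather than one is that a single $\Omega(\gamma_\epsilon,\tau)$ degenerates to a boundary patch, not to an interior point, whereas intersecting with the small ``geodesic ball'' $\Omega(\gamma''_\delta,\delta)$ around $y$ cuts the slab down to a genuine neighborhood of $x$.

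The second step is a Lebesgue differentiation argument. By Lemma~\ref{l_iinnerp} applied with $\gamma' = \gamma_\epsilon$, $\tau' = r+\delta$, $\gamma'' = \gamma''_\delta$, $\tau'' = \delta$, we have, for all $\epsilon,\delta > 0$,
\begin{equation}\notag
\left\langle \I_{S_{\epsilon,\delta}}\, u_1^f(t), u_1^g(s) \right\rangle_{L^2(\Omega)} = \left\langle \I_{S_{\epsilon,\delta}}\, u_2^f(t), u_2^g(s)\right\rangle_{L^2(\Omega)}, \quad t,s \in [0,T].
\end{equation}
Dividing by $|S_{\epsilon,\delta}|$ and letting $\epsilon,\delta \to 0$, the left side converges to $u_1^f(t,x)\overline{u_1^g(s,x)}$ and the right side to $u_2^f(t,x)\overline{u_2^g(s,x)}$, provided $x$ is a Lebesgue point of the $L^1(\Omega)$ function $z \mapsto u_j^f(t,z)\overline{u_j^g(s,z)}$ and provided the family $S_{\epsilon,\delta}$ shrinks ``nicely'' to $x$ in the sense of Lebesgue's differentiation theorem (bounded eccentricity with respect to balls centered at $x$). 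The integrability is immediate since $u_j^f(t),u_j^g(s) \in L^2(\Omega)$ by the well-posedness recalled after \eqref{eq1}; in fact, as in the proof of Corollary~\ref{c_innerp}, the Sobolev embedding $u_j^f(t) \in L^p(\Omega)$ with $p$ as there gives $u_j^f(t)\overline{u_j^g(s)} \in L^{p/2}(\Omega)$ with $p/2 > 1$, which makes the differentiation argument robust and, by continuity in $t,s$, lets one pass from almost-every $x$ to every $x \in N(\gamma)$ and all $t,s$ simultaneously (approximate $t,s$ by a countable dense set and use $\mathcal C([0,T];L^2(\Omega))$-continuity together with uniform local $L^{p/2}$ bounds).

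The main obstacle is the geometric claim that $S_{\epsilon,\delta}$ shrinks regularly to $\{x\}$: one must verify both that $\bigcap S_{\epsilon,\delta} = \{x\}$ and that $S_{\epsilon,\delta}$ contains, and is contained in, balls centered near $x$ of comparable radii, so that Lebesgue differentiation applies. This is where convexity of $\Omega$ is used decisively — for a convex domain the set $\{z : \dist(z,\gamma_\epsilon) \le r+\delta\}$ near the normal ray at $y$ is a mild perturbation of a half-space slab, and intersecting with $\Omega(\gamma''_\delta,\delta) \subset B(y,\delta)$ yields a set wedged between two balls about $x$ whose radii are comparable uniformly in the parameters; the estimate $r = \dist(y-r\nu(y),\p\Omega)$ valid on $[0,\tau_0]$, together with $r < \tau_0$, is exactly what guarantees that no other part of the boundary interferes. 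A secondary technical point is the uniform-in-$(t,s)$ upgrade, handled by the $L^{p/2}$ Hölder bound already exploited in Corollary~\ref{c_innerp} combined with the continuity of $t \mapsto u_j^f(t)$ into $L^2(\Omega)$.
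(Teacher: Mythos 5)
Your overall strategy --- squeeze the point $x$ between domains of influence and apply Lebesgue differentiation to the identity supplied by Lemma~\ref{l_iinnerp} --- is the same as the paper's, and you correctly identify bounded eccentricity of the shrinking family as the crux. But your specific family $S_{\epsilon,\delta}=\Omega(\gamma_\epsilon,r+\delta)\cap\Omega(\gamma''_\delta,\delta)$ does not localize at $x$. The set $\Omega(\gamma''_\delta,\delta)$ is the collar of width $\delta$ around the boundary cap $\gamma''_\delta$, whereas $\dist(x,\p\Omega)=r$ is fixed; hence for $\delta<r$ one has $\dist(x,\gamma''_\delta)\geq r>\delta$, so $x\notin S_{\epsilon,\delta}$ and the claimed identity $\bigcap_{\epsilon,\delta}S_{\epsilon,\delta}=\{x\}$ fails --- the intersection sits on the boundary near $y$, not at $x$. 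More structurally, domains of influence are sublevel sets of distance functions to boundary caps around the same point $y$, so any finite intersection of them contains a full relative neighborhood of $y$ in $\overline{\Omega}$ and can never shrink to an interior point. To cut away the material between $x$ and $\gamma$ one must use the \emph{complement} of a domain of influence, i.e.\ the combination $\I_{B}\left(1-\I_{\Omega(\gamma,r-\epsilon)}\right)$, which the paper obtains by subtracting the identity of Lemma~\ref{l_iinnerp} from that of Corollary~\ref{c_innerp}.

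Even after this repair, the natural set $B(y,r+\epsilon)\setminus\Omega(\gamma,r-\epsilon)$ is a lens of thickness of order $\epsilon$ in the normal direction but of transverse width of order $\sqrt{r\epsilon}$, so its eccentricity blows up like $\epsilon^{-1/2}$ and the Lebesgue differentiation theorem does not apply to a general $L^1$ density; this is exactly why the paper treats the smooth case only as a heuristic. The paper's actual construction replaces the single ball by $\bigcap_{j=1}^n B(z_j,|x-z_j|+\epsilon)$ with $z_j\in\gamma$ chosen so that the directions $x-z_j$ span $\R^n$; removing $\Omega(\gamma,r-\epsilon)$ then leaves a set comparable to a simplex whose faces all lie at distance $\epsilon$ from $x$, which has bounded eccentricity and shrinks to $\{x\}$. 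Your proposal flags the eccentricity requirement as ``the main obstacle'' but then asserts, rather than establishes, that your sets are wedged between comparable balls about $x$; for the sets you define this is false, and the $L^{p/2}$ integrability you invoke does not compensate for an unboundedly eccentric family. The central geometric step is therefore a genuine gap. (A minor additional point: Lebesgue differentiation yields the identity only for a.e.\ $x$, which is how the paper states it --- ``the products are interpreted as $L^1$-functions'' --- so the everywhere-in-$x$ upgrade you sketch is neither needed nor available.)
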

\begin{proof}
To illustrate the idea of the proof, let us suppose for a moment that $q_j$, $j=1,2$, and $\Omega$ are smooth. Then for smooth $f$ and $g$, also the functions $u^f_j$ and $u^g_j$ are smooth.
Let $y \in \gamma$, $r \in (0,\tau_0)$, and set $x = y-r\nu(y)$, 
$\tilde A_{\epsilon,x} = B(y,r+\epsilon) \setminus \Omega(\gamma,r-\epsilon)$, $\epsilon > 0$. Then $\tilde A_{\epsilon,x} \to \{x\}$ as $\epsilon \to 0$. By taking a limit analogous to that in Corollary \ref{c_innerp}, it follows from Lemma \ref{l_iinnerp} that
\begin{align}\notag %\label{}
\left\langle \I_{B(y,r+\epsilon)} \I_{\Omega(\gamma,r-\epsilon)} u_1^{f}(t),u_1^{g}(s)\right\rangle_{L^2(\Omega)}=\left\langle \I_{B(y,r+\epsilon)} \I_{\Omega(\gamma,r-\epsilon)} u_2^{f}(t),u_2^{g}(s)\right\rangle_{L^2(\Omega)},
\end{align}
where $t, s \in [0,T]$ and $f,g\in \mathcal C_0^\infty((0,T)\times \gamma)$. Combining this with Corollary \ref{c_innerp}, we obtain
\begin{align}\notag %\label{}
\left\langle \I_{B(y,r+\epsilon)} (1-\I_{\Omega(\gamma,r-\epsilon)}) u_1^{f}(t),u_1^{g}(s)\right\rangle_{L^2(\Omega)}=\left\langle \I_{B(y,r+\epsilon)} (1-\I_{\Omega(\gamma,r-\epsilon)}) u_2^{f}(t),u_2^{g}(s)\right\rangle_{L^2(\Omega)},
\end{align}
and therefore, denoting the volume of $\tilde A_{\epsilon,x}$ by $|\tilde A_{\epsilon,x}|$,
\begin{align}\notag %\label{}
|\tilde A_{\epsilon,x}|^{-1} \left\langle \I_{\tilde A_{\epsilon,x}} u_1^{f}(t),u_1^{g}(s)\right\rangle_{L^2(\Omega)}= |\tilde A_{\epsilon,x}|^{-1} \left\langle \I_{\tilde A_{\epsilon,x}} u_2^{f}(t),u_2^{g}(s)\right\rangle_{L^2(\Omega)}.
\end{align}
Letting $\epsilon \to 0$, we obtain $u_1^{f}(t,x) \overline{u_1^{g}(t,x)}
= u_2^{f}(t,x) \overline{u_2^{g}(t,x)}$.

Let us now turn to the case of bounded $q_j$, $j=1,2$. The above argument does not generalize immediately, since the limit with respect to $\epsilon$ might not exist in the non-smooth case. 
Our remedy is to replace the sets $\tilde A_{\epsilon,x}$ with sets of bounded eccentricity. 

Let $x$ be as above. 
Choose unit vectors $\xi_1,\dots,\xi_n \in \R^n$, that form a basis of $\R^n$, and that are small enough perturbations of $-\nu(y)$ so that the lines $s \mapsto x + s \xi_j$ intersect $\partial \Omega$ in $\gamma$ near $y$. Denote the points of intersection by $z_j$, and consider the sets
\bel{Ax1}
A_{x,\epsilon} = B_{x,\epsilon}\setminus \Omega(\gamma,r-\epsilon), 
\quad 
B_{x,\epsilon} = \bigcap_{j=1}^nB(z_j,|x-z_j|+\epsilon), \quad \epsilon > 0.
\ee
The construction of the set $A_{x,\epsilon}$ is illustrated in Fig.~\ref{FigureExcentriciteFinie}.
\medskip
\begin{figure}[h]
\includegraphics[width=.5\textwidth]{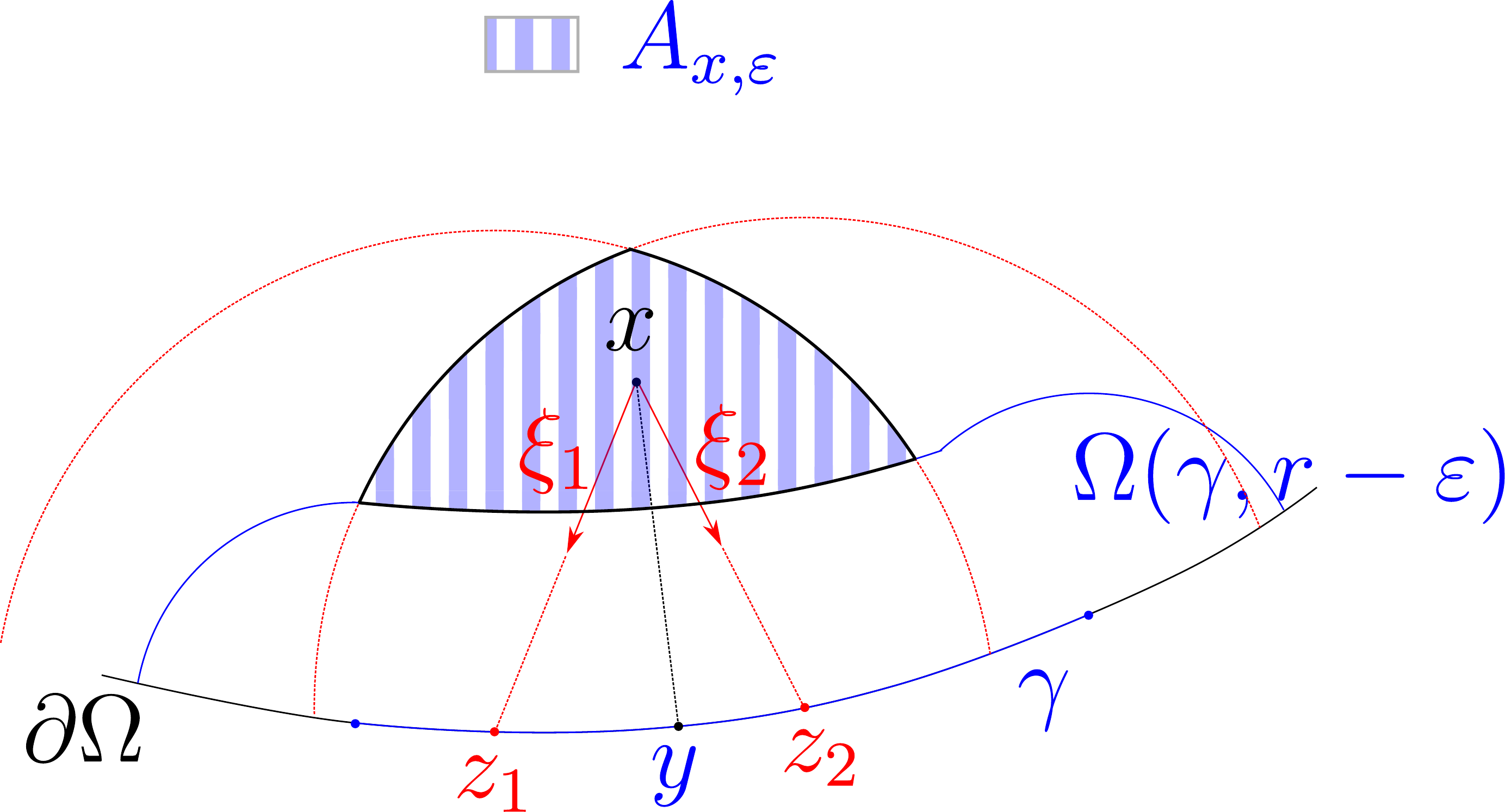}
\caption{\label{FigureExcentriciteFinie} Sets $A_{x,\epsilon}$}
\end{figure}

For small $\epsilon > 0$, the set $A_{x,\epsilon}$ is approximated in the first order by the simplex with outward normals $-\nu(y)$ and $\xi_j$, $j=1,\dots,n$, and all the faces having distance $\epsilon$ to $x$. Thus $A_{x,\epsilon}$ is of bounded eccentricity and $A_{\epsilon,x} \to \{x\}$ as $\epsilon \to 0$.

By repeating the proofs of Lemma \ref{l_iinnerp} and Corollary \ref{c_innerp} several times, we obtain analogously to the smooth case,
\begin{align}\notag %\label{}
|A_{\epsilon,x}|^{-1} \left\langle \I_{A_{\epsilon,x}} u_1^{f}(t),u_1^{g}(s)\right\rangle_{L^2(\Omega)}= |A_{\epsilon,x}|^{-1} \left\langle \I_{A_{\epsilon,x}} u_2^{f}(t),u_2^{g}(s)\right\rangle_{L^2(\Omega)}.
\end{align}
The Lebesgue differentiation theorem, see e.g. \cite[Chapter 7, Theorem 7.14]{R}, implies the claim. Note that the products in the claim are interpreted as $L^1$-functions.

\end{proof}

\begin{lem}
\label{l_pt}
Suppose that $B_{q_1} = B_{q_2}$. 
Let $f,g\in H^1(\Sigma)$ be supported in $(0,T]\times \gamma$.
Then
\begin{align}\notag %\label{}
u_1^{f}(t,x)
= u_2^{f}(t,x), \quad t \in [0,T],\ x \in N(\gamma).
\end{align}
\end{lem}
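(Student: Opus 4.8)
The plan is to deduce Lemma~\ref{l_pt} from Lemma~\ref{l_ptprod} by a polarization-type argument, exploiting that the pointwise products $u_j^f(t,x)\overline{u_j^g(s,x)}$ are known for \emph{all} admissible $f,g$ and \emph{all} times $t,s$. The key observation is that the solutions are real valued when $f$ is real valued (since $q_j$ is real), so one may restrict attention to real $f$; then the complex conjugates drop out. Taking $g=f$ and $s=t$ in Lemma~\ref{l_ptprod} gives $u_1^f(t,x)^2 = u_2^f(t,x)^2$ for $x\in N(\gamma)$, hence $u_1^f(t,x) = \pm u_2^f(t,x)$, and the only thing left is to fix the sign, uniformly in $t$ and in $f$.

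First I would record the reduction to real data: for real valued $f\in H^1(\Sigma)$ with $\supp(f)\subset(0,T]\times\gamma$, the solution $u_j^f$ is real valued, and a general complex $f$ decomposes into real and imaginary parts, so it suffices to prove the identity for real $f$. Next I would fix $x\in N(\gamma)$ and a real $f$, and consider the continuous functions $t\mapsto u_j^f(t,x)$ (continuity in $t$ with values in $L^p$, combined with the interior regularity implicit in the construction of $N(\gamma)$, lets one speak of these as genuine functions, or at least as $L^1_{loc}$ representatives that agree a.e.; I would phrase the conclusion, as the authors do, as an $L^1$ identity, which sidesteps pointwise subtleties). For fixed $x$, Lemma~\ref{l_ptprod} with $g=f$, $s$ arbitrary, gives $u_1^f(t,x)u_1^f(s,x) = u_2^f(t,x)u_2^f(s,x)$ for all $t,s$. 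If $u_1^f(\cdot,x)$ is not identically zero, pick $s_0$ with $u_1^f(s_0,x)\neq 0$; then $u_2^f(s_0,x)\neq 0$ too (else the right side vanishes while the left does not), and $\sigma := u_2^f(s_0,x)/u_1^f(s_0,x)$ satisfies $u_2^f(t,x) = \sigma\, u_1^f(t,x)$ for all $t$; plugging $t=s=s_0$ into $u_1^f(s_0,x)^2 = u_2^f(s_0,x)^2$ forces $\sigma^2 = 1$, i.e.\ $\sigma=\pm1$. So for each $(f,x)$ with $u_1^f(\cdot,x)\not\equiv0$ there is a sign $\sigma(f,x)\in\{\pm1\}$ with $u_2^f(t,x)=\sigma(f,x)u_1^f(t,x)$ for all $t$; when $u_1^f(\cdot,x)\equiv0$ the same holds trivially with either sign, and moreover $u_2^f(\cdot,x)\equiv0$ by the product identity.

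The main obstacle is ruling out $\sigma=-1$, and for this I would use the polarization/linearity built into the problem, upgrading Lemma~\ref{l_ptprod} to a statement about two \emph{different} sources. Applying Lemma~\ref{l_ptprod} with a general pair $(f,g)$ of real sources gives $u_1^f(t,x)u_1^g(s,x)=u_2^f(t,x)u_2^g(s,x)$. Fix $x\in N(\gamma)$. If there exists a single source $g_0$ with $u_1^{g_0}(\cdot,x)\not\equiv0$ and $\sigma(g_0,x)=+1$, then for every $f$ with $u_1^f(\cdot,x)\not\equiv0$ we get, choosing $s_0$ with $u_1^{g_0}(s_0,x)\neq0$, that $u_2^f(t,x) = u_1^f(t,x)u_1^{g_0}(s_0,x)/u_2^{g_0}(s_0,x) = u_1^f(t,x)$, so $\sigma(f,x)=+1$ for all $f$. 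It therefore remains to produce one source with $\sigma=+1$ at each $x\in N(\gamma)$. For this I would invoke a short-time asymptotic or a direct finite-speed-of-propagation argument: by Theorem~\ref{t2}, if $\supp(f)\subset (T-\delta,T]\times\gamma_\epsilon$ with $\gamma_\epsilon$ a small neighbourhood of the foot point $y$ of $x$, then the support of the solution at the relevant time is concentrated near $y$ and the leading behaviour of $u_j^f$ near $x$ is governed by the \emph{same} transport (the principal part $\p_t^2-\Delta$ is independent of $q_j$), so $u_1^f$ and $u_2^f$ have the same leading term there and hence the same sign; concretely one can choose $f$ so that $u_j^f(t,x)$ is, to leading order, a fixed nonzero multiple of a common explicit profile, forcing $\sigma(f,x)=+1$. (Alternatively, one argues by contradiction that if $\sigma\equiv-1$ one could compare the two solutions' behaviour as the boundary source is turned on and reach a contradiction with the common Cauchy data $u_j^f=\p_t u_j^f=0$ at $t=0$ propagated via Lemma~\ref{l1}.) Once $\sigma(f,x)=+1$ is established for all $f$ and all $x\in N(\gamma)$, and noting the case $u_1^f(\cdot,x)\equiv0$ was already handled, we conclude $u_1^f(t,x)=u_2^f(t,x)$ for $t\in[0,T]$ and $x\in N(\gamma)$, first for real $f$ and then, by linearity, for all $f\in H^1(\Sigma)$ supported in $(0,T]\times\gamma$, which is the claim.
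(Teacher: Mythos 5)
Your algebraic reduction is sound as far as it goes: for real $f$ the identity of Lemma~\ref{l_ptprod} with $g=f$ gives $u_2^f(t,x)=\sigma(f,x)\,u_1^f(t,x)$ with $\sigma(f,x)=\pm1$, and the cross identity with a second source $g_0$ propagates the sign $+1$ from $g_0$ to every $f$, provided $u_1^{g_0}(s_0,x)\neq 0$ at the point in question. (There are also measure-theoretic issues in fixing $x$ while quantifying over all $t,s$: the identity of Lemma~\ref{l_ptprod} is an identity of $L^1$ functions of $x$ with an exceptional null set depending on $(t,s)$, and $t\mapsto u_j^f(t,x)$ is not pointwise continuous for $H^1$ data in dimension $n\geq 2$; these can likely be repaired with a countable dense set of times and a.e.\ convergent subsequences, but they deserve more than a parenthesis.)

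The genuine gap is the anchor step. You need, for almost every $x\in N(\gamma)$, a source $g_0$ and a time $s_0$ such that $u_1^{g_0}(s_0,x)$ and $u_2^{g_0}(s_0,x)$ are both nonzero and have the same sign \emph{pointwise at $x$}. Your justification --- that the leading behaviour of $u_j^{g_0}$ near $x$ is governed by the principal part $\p_t^2-\Delta$ and hence is a common nonzero profile --- is exactly the kind of pointwise short-time asymptotic that is available for smooth $q$ but is not established, and is the central difficulty, for $q\in L^\infty$. The paper's geometric optics ansatz $v_j=a\,e^{i\sigma(t-x\cdot\omega)}+R_{j,\sigma}$ controls the remainder only in $L^2((0,T)\times\Omega)$ (see \eqref{t5e}); this gives no information about the sign, or even the nonvanishing, of $v_j(s_0,x)$ at any particular point, and the common leading term $a\,e^{i\sigma(t-x\cdot\omega)}$ oscillates in sign, so its real or imaginary part cannot serve as a fixed-sign real anchor. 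The paper sidesteps the sign dichotomy entirely: it inserts the GO solution as $g$ in Lemma~\ref{l_ptprod}, multiplies by $e^{i\sigma(\cdot)}\psi$, integrates in $(s,x)$, and sends $\sigma\to+\infty$, so that the remainder is killed by its $L^2$ smallness and one is left with the identity \eqref{t5k}; varying the cutoffs $\chi$ and $\psi$ then identifies $u_1^f$ with $u_2^f$ as $L^1$ functions near $x_0$, with no pointwise asymptotics ever needed. Your parenthetical alternative (deriving a contradiction from $\sigma\equiv-1$ via the common zero Cauchy data at $t=0$) does not work either: $-u_1^f$ has the same vanishing Cauchy data as $u_1^f$, and $u_1^f+u_2^f$ satisfies no single homogeneous wave equation to which Lemma~\ref{l1} or Theorem~\ref{t3} could be applied. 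To close the gap you would essentially have to replace the anchor step by the paper's integrated high-frequency limit, at which point the sign bookkeeping becomes unnecessary.
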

\begin{proof}
We will choose $u_j^g$ in Lemma \ref{l_ptprod} to be a suitable geometric optics solution, and begin by constructing such solutions. 
Let $y \in \gamma$, $r \in (0,\tau_0)$, and set $x_0 = y-r\nu(y)$.
Let $\delta > 0$ be small and set $s_1 = r + \delta$ and $s_2 = r + 2 \delta$.
The line $\beta(t) = x_0 + (s_1-t) \nu(y)$
satisfies $\beta(s_1) = x_0$, $\beta(s_1-r) %= x_0 + r \nu(y) 
= y \in \gamma$,
and $\beta(0) \in \R^n \setminus \Omega$. Hence if 
$\delta$ is small enough and 
$\chi \in \mathcal C_0^\infty(\R^n)$ has small enough support, then the function
$a(t,x) = \chi(x - \beta(t))$
satisfies
\begin{align}\label{suppa}
\supp(a) \cap ([0,s_2] \times \Omega) \subset (0,s_2] \times N(\gamma), \quad 
\supp(a) \cap ([0,s_2] \times \p \Omega) \subset (0,s_2) \times \gamma.
\end{align}
In particular, $\supp[a(0,\cdot)] \subset \R^n \setminus \Omega$. The support of this particular solution is illustrated in Fig.~\ref{FigureSupport}.

%\medskip
%\begin{figure}[h]
%\includegraphics[scale=0.3]{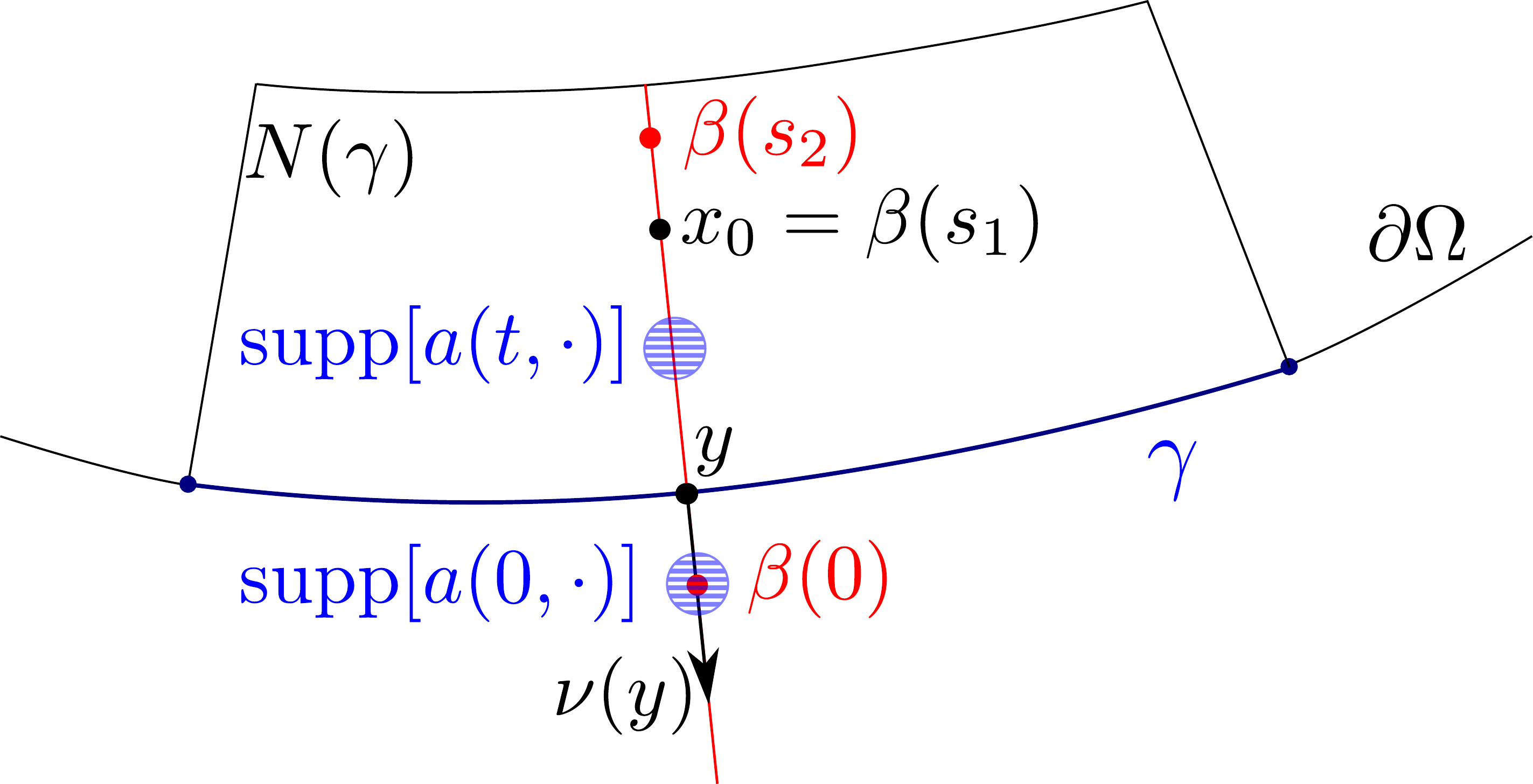}
%\caption{\label{FigureSupport} Support of the geometric optics solution}
%\end{figure}

\medskip
\begin{figure}[h]
\includegraphics[width=.5\textwidth]{}
\caption{\label{FigureSupport} Support of the geometric optics solution}
\end{figure}

To simplify the notation, we suppose that $\chi$ is real valued, and write  $\omega = -\nu(y)$.
Then we consider
$$v_j(t,x)=a(t,x)e^{i\sigma\left(t-x\cdot\omega\right)}+R_{j,\sigma}(t,x),\quad (t,x)\in(0,T)\times\Omega,\ \sigma>1,\ j=1,2,$$
where $R_{j,\sigma}$ solves
$$\left\{\begin{array}{ll}\partial_t^2R_j-\Delta_x R_j+q_j(x)R_j=-e^{i\sigma\left(t-x\cdot\omega\right)}(\p_t^2-\Delta_x+q_j)a,\quad &\textrm{in}\ (0,T)\times\Omega,
\\ R_j=0,\quad &\textrm{on}\ (0,T)\times\p\Omega,
\\  R_j(0,\cdot)=0,\quad \partial_tR_j(0,\cdot)=0,\quad &\textrm{in}\ \Omega.
\end{array}\right.$$
It follows that $\partial_t^2v_j-\Delta_x v_j+q_j(x)v_j=0$ in $(0,T)\times\Omega$ and, analogously to \cite[Lemma 2.2]{KO} one can check that
\bel{t5e}
\norm{R_{j,\sigma}}_{L^2((0,T)\times\Omega)} \to 0,
\quad \sigma \to +\infty.\ee
Moreover, (\ref{suppa}) implies that 
\bel{t5f}\p_t^kv_j(0,x)=\p_t^k\left(a(t,x)e^{i\sigma\left(t-x\cdot\omega\right)}\right)_{|t=0} = 0,\quad x \in \Omega,\ k=0,1.
\ee
Define $g(t,x) = \I_{(0,s_2)}(t) a(t,x)e^{i\sigma\left(t-x\cdot\omega\right)}$, $t \in [0,T]$, $x \in \p \Omega$. 
Then (\ref{suppa}) implies that 
$\supp(g) \subset (0,T) \times \gamma$.

We have $u_j^g = v_j$ on $(0,s_2) \times \Omega$. Up to a reduction of $\delta$ we can assume that $a(t,\cdot)_{|\partial \Omega} = 0$ for $t \geq s_1$, thus $u_j^g = v_j$ on $(0,T) \times \Omega$. Therefore Lemma \ref{l_ptprod} implies that 
for all $f\in \mathcal C^\infty_0((0,T]\times \gamma)$, $\psi\in\mathcal C^\infty_0(-\delta,\delta)$, $t\in[0,T-\delta]$, $s \in \R$, and $x \in N(\gamma)$,
$$u_1^{f}(t+s,x)\overline{v_1(s+s_1,x)}e^{i\sigma(s+s_1-x\cdot\omega)}\psi(s)
=u_2^{f}(t+s,x)\overline{v_2(s+s_1,x)}e^{i\sigma(s+s_1-x\cdot\omega)}\psi(s).$$
Integrating both sides of this expression
and sending $\sigma \to +\infty$, 
we get
%$$\begin{array}{l}\int_0^\tau\int_\Omega u_1^f\overline{a}\psi(t)dxdt+\int_0^\tau\int_\Omega u_1^f\overline{R_{1,\sigma}}e^{i\sigma(t-x\cdot\omega)}\psi(t)dxdt\\
%=\int_0^\tau\int_\Omega u_2^f\overline{a}\psi(t)dxdt+\int_0^\tau\int_\Omega u_2^f\overline{R_{2,\sigma}}e^{i\sigma(t-x\cdot\omega)}\psi(t)dxdt.\end{array}$$
%Then, in view of \eqref{t5e}, sending $\sigma\to+\infty$, we get
\bel{t5k}\int_\R \int_{N(\gamma)} u_1^f(t+s,x)a(s+s_1,x)\psi(s)dxds
=\int_\R \int_{N(\gamma)} u_2^f(t+s,x)a(s+s_1,x)\psi(s)dxds.\ee
After the change of variables $z = x - \beta(s+s_1)$,
\begin{align}\notag %\label{}
\int_\R\int_{\R^n} u_1^f(t+s,z+\beta(s+s_1))\chi(z)\psi(s)dzds
=\int_\R\int_{\R^n} u_2^f(t+s,z+\beta(s+s_1))\chi(z)\psi(s)dzds.
\end{align}
As $\chi$ and $\psi$ are arbitrary cutoff functions with small supports,
it holds that $u_1^f(t,x) = u_2^f(t,x)$ for $t \in [0,T-\delta]$ and $x$ near $x_0$. As $\delta > 0$ can be taken arbitrarily small and $x_0 \in N(\gamma)$ can be chosen arbitrarily, the claim follows.

\end{proof}

\subsection{Proof of Theorem \ref{t5}}
Choose $\tau > 0$ and open and non-empty $\gamma' \subset \gamma$ satisfying (\ref{geo}).
Lemmas \ref{l4}, \ref{l_pt} and the finite speed of propagation 
imply that for any $f\in \mathcal C^\infty_0((0,\tau]\times \gamma')$,
\begin{equation*}
\left\langle \phi_{1,k},u_2^{f}(\tau,\cdot)\right\rangle_{L^2(\Omega(\gamma',\tau))}
=\left\langle \phi_{1,k},u_1^{f}(\tau,\cdot)\right\rangle_{L^2(\Omega)}=\left\langle \phi_{2,k},u_2^{f}(\tau,\cdot)\right\rangle_{L^2(\Omega)}=\left\langle \phi_{2,k},u_2^{f}(\tau,\cdot)\right\rangle_{L^2(\Omega(\gamma',\tau))}.
\end{equation*}
This together with Corollary \ref{c1} implies that 
$\phi_{1,k}=\phi_{2,k}$ in $\Omega(\gamma',\tau)$.
Thus in $\Omega(\gamma',\tau)$ it holds that 
$$0=(A_{q_1}-\lambda_k) \phi_{1,k} - (A_{q_2} -\lambda_k)\phi_{2,k}
=(q_1-q_2)\phi_{2,k},\quad k\in\mathbb N^*.$$
Integrating this expression on $\Omega(\gamma',\tau)$ we get
$\left\langle (q_1-q_2)\mathds{1}_{\Omega(\gamma',\tau)},\phi_{2,k} \right\rangle_{L^2(\Omega)}=0$, $k\in\mathbb N^*$.
This proves \eqref{t5a}.\qed

\section{Global recovery}
The goal of this section is to get global recovery of the potential from the local determination.  More precisely, we will complete the proof of Theorem \ref{t1}.
We start by fixing the notation. From now on we consider $\tau\in(0,T)$ and $\gamma' \subset \gamma$  such that condition \eqref{t5a} is fulfilled and  we assume that $T>\diam(\Omega)+2\tau$. For any open  connected set $B$  of $\overline{\Omega}$  we define the operator
$$K_{j,B} h:={u_j^h}_{|[0,2T+\tau]\times B},\quad h\in\mathcal C^\infty_0((0,2T+\tau]\times\gamma').$$
Moreover, for $B\subset\Omega$, we define
$$L_{j,B}F:={v_{j,F}}_{| [0,2T]\times B},\quad F\in\mathcal C^\infty_0((\tau,T)\times B)$$
with $v_{j,F}$ solving
\begin{equation}\label{eq4}\left\{\begin{array}{ll}\partial_t^2v_j-\Delta_x v_j+q_j(x)v_j=F,\quad &\textrm{in}\ (0,\infty)\times\Omega,\\  v_j(0,\cdot)=0,\quad \partial_tv_j(0,\cdot)=0,\quad &\textrm{in}\ \Omega,\\ v_j=0,\quad &\textrm{on}\ (0,\infty)\times\p\Omega.\end{array}\right.\end{equation}
We write also 
$$B(x,r):=\{y\in\R^n:\ |y-x|<r\}, \quad x\in\R^n,\ r>0.$$
\begin{lem}\label{l8} 
 Fix $x\in \Omega(\gamma',\tau)$ and consider for $\epsilon>0$ the set $B=B(x,\epsilon)\cap \Omega(\gamma',\tau)$. Then,   we have
\bel{l8a}K_{1,B}=K_{2,B}.\ee
Moreover, if $\overline{B}\subset \Omega\cap \Omega(\gamma',\tau)$ we have
\bel{l8b}L_{1,B}=L_{2,B}.\ee
\end{lem}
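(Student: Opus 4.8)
The plan is to obtain \eqref{l8a} from the results of Section~3 after merely enlarging the time horizon, and to prove \eqref{l8b} separately by a unique continuation argument, since the internal source problem \eqref{eq4} is not addressed by any earlier result. For \eqref{l8a}, I first note that Lemmas~\ref{l4}, \ref{l_innerp}, \ref{l_iinnerp}, Corollary~\ref{c_innerp} and Lemmas~\ref{l_ptprod}, \ref{l_pt} all remain valid, with the same proofs, when the time horizon $T$ is replaced by an arbitrary $T'>0$: the ODE \eqref{def_vk} holds for all times, the approximate controllability of Corollary~\ref{c1} holds for any final time, and the geometric optics remainder bound \eqref{t5e} holds uniformly on $(0,T')\times\Omega$ for each $T'$ (the ray $\beta$ from the proof of Lemma~\ref{l_pt} exits $\overline\Omega$ after a time bounded independently of $T'$, so the source of $R_{j,\sigma}$ is compactly supported in time, and the argument of \cite{KO} gives the $L^2$-smallness on the whole interval). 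Taking $T'=2T+\tau$ in Lemma~\ref{l_pt} and using $\gamma'\subset\gamma$, we get $u_1^h=u_2^h$ on $[0,2T+\tau]\times N(\gamma)$ for every $h\in\mathcal C^\infty_0((0,2T+\tau]\times\gamma')$. By the geometric hypothesis \eqref{geo} we have $B\subset\Omega(\gamma',\tau)\subset N(\gamma)\cup\gamma$, and since $u_1^h=u_2^h$ on $(0,2T+\tau]\times\Gamma$, we conclude $u_1^h=u_2^h$ on $[0,2T+\tau]\times B$, that is, \eqref{l8a}.

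For \eqref{l8b}, fix $F\in\mathcal C^\infty_0((\tau,T)\times B)$ and set $z=v_{1,F}-v_{2,F}$. Expanding $v_{j,F}(t)=\sum_k c_{j,k}(t)\phi_{j,k}$, the coefficients solve $c_{j,k}''+\lambda_k c_{j,k}=\langle F(t),\phi_{j,k}\rangle_{L^2(\Omega)}$ with vanishing Cauchy data. Since $\supp F(t,\cdot)\subset B\subset\Omega(\gamma',\tau)$ and $\phi_{1,k}=\phi_{2,k}$ on $\Omega(\gamma',\tau)$ (established in the proof of Theorem~\ref{t5}), the right-hand sides agree, so $c_{1,k}=c_{2,k}$ for all $k$; combined with ${\partial_\nu\phi_{1,k}}_{|\gamma}={\partial_\nu\phi_{2,k}}_{|\gamma}$ this gives ${\partial_\nu v_{1,F}}={\partial_\nu v_{2,F}}$ on $(0,\infty)\times\gamma$, and with the Dirichlet condition, $z={\partial_\nu z}=0$ on $(0,\infty)\times\gamma'$. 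Next, by \eqref{t5a} the source $(q_2-q_1)v_{2,F}$ of $z$ vanishes wherever $\dist(x,\gamma')\leq\tau$, so $z$ solves the homogeneous wave equation at every $(t,x)$ with $x\in\Omega$ and $\dist(x,\gamma')<\tau$. For such $x$, convexity of $\Omega$ guarantees that the segment from $x$ to its closest point of $\gamma'$ stays in $\{x':\dist(x',\gamma')<\tau\}$, so Tataru's unique continuation theorem \cite{T1} (as in the proof of Theorem~\ref{t3}), applied in this collar, propagates the vanishing of the Cauchy data on $(0,\infty)\times\gamma'$ and yields $z(t,x)=0$ for all $t>\dist(x,\gamma')$. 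Finally, $v_{j,F}$, hence $z$, vanishes on $[0,\tau]\times\overline\Omega$ because $\supp F\subset(\tau,T)\times B$; since $\dist(x,\gamma')\leq\tau$ for $x\in B$, the last two statements give $z(t,x)=0$ for all $t\geq0$, $x\in B$. In particular $z=0$ on $[0,2T]\times B$, which is \eqref{l8b}.

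The delicate part is the unique continuation step for \eqref{l8b}: since $z$ solves the homogeneous equation only on the collar $\{x\in\Omega:\dist(x,\gamma')<\tau\}$, one cannot quote the global Theorem~\ref{t3} but must use the local Tataru estimate and verify that the forward and backward cones of a point at distance $<\tau$ from $\gamma'$ remain inside the collar — this is where convexity is really used, and it is the analytic core of the global recovery. Two routine technical points remain: one must justify that $\sum_k c_{j,k}(t)\,{\partial_\nu\phi_{j,k}}_{|\gamma}$ converges to ${\partial_\nu v_{j,F}(t)}_{|\gamma}$ in $L^2(\gamma)$ (with $F$ smooth and compactly supported in $\Omega$, the solution of \eqref{eq4} lies in $\mathcal C([0,\infty);H^2\cap H^1_0(\Omega))$, so the normal trace is classical, cf. \cite{LLT}), and, for \eqref{l8a}, one must confirm the uniformity in $\sigma$ of the remainder bound on the enlarged interval $[0,2T+\tau]$, which follows from the same compactness/oscillation argument as on $[0,T]$.
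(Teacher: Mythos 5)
Your proof is correct. For \eqref{l8a} it follows the paper's route: the paper simply asserts that \eqref{l8a} ``follows immediately from Lemma \ref{l_pt}'', and your explicit remarks --- that the Section~3 lemmas persist on the enlarged horizon $[0,2T+\tau]$ and that the boundary points of $B$ are handled by the Dirichlet condition via $\Omega(\gamma',\tau)\subset N(\gamma)\cup\gamma$ --- fill in exactly what is left implicit there. For \eqref{l8b}, however, you reach the key intermediate fact $\partial_\nu v_{1,F}=\partial_\nu v_{2,F}$ on the lateral boundary by a genuinely different argument. The paper derives it on $(0,2T+\tau)\times\gamma'$ from \eqref{l8a} by duality: it pairs $v_{j,F}$ against the time-reversed waves $u_j^{\tilde h}$, integrates by parts, and uses $K_{1,B}=K_{2,B}$ together with the arbitrariness of $h$. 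You bypass \eqref{l8a} entirely and read the Neumann trace off the spectral expansion $v_{j,F}(t)=\sum_k c_{j,k}(t)\phi_{j,k}$, using that the forcing coefficients agree because $\phi_{1,k}=\phi_{2,k}$ on $\Omega(\gamma',\tau)\supset B$ (from the proof of Theorem \ref{t5}) and that $\partial_\nu\phi_{1,k}=\partial_\nu\phi_{2,k}$ on $\gamma$ by hypothesis; this is closer in spirit to Lemma \ref{l4}, gives the slightly stronger conclusion on all of $(0,\infty)\times\gamma$, but costs you the convergence of $\sum_k c_{j,k}(t)\,\partial_\nu\phi_{j,k}$ in $L^2(\gamma)$, which you correctly reduce to $\mathcal C([0,\infty);H^2\cap H^1_0)$ regularity for interior smooth sources --- a point the paper's duality argument avoids, needing only $\partial_\nu v_j\in L^2(\Sigma)$ from \cite{LLT}. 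From the vanishing Cauchy data on $\gamma'$ onward (sliding time windows for Theorem \ref{t3} in the collar where $q_1=q_2$, plus causality on $[0,\tau]$), the two arguments coincide; the collar subtlety you flag is the same one the paper silently accepts when it applies Theorem \ref{t3} with the equation holding only on $\Omega(\gamma',\tau)$.
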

\begin{proof}
The equation (\ref{l8a}) follows immediately from Lemma \ref{l_pt}. %Let us consider \eqref{l8b}. 
Let $h \in \mathcal C^\infty_0((0,2T+\tau)\times\gamma')$
and set $\tilde h(t,x) = h(2T+\tau-t,x)$.
Then integrating by parts, we find
$$\begin{aligned}\int_0^{2T+\tau}\int_{B}u_j^{\tilde{h}}(2T+\tau-t,x)\overline{F(t,x)}dxdt&=\int_0^{2T+\tau}\int_\Omega u_j^{\tilde{h}}(2T+\tau-t,x)\overline{F(t,x)}dxdt\\
\ &=\int_0^{2T+\tau}\int_\Omega u_j^{\tilde{h}}(2T+\tau-t,x)\overline{(\p_t^2-\Delta_x+q_j)v_j(t,x)}dxdt\\
\ &=-\int_0^{2T+\tau}\int_{\gamma'} \tilde{h}(2T+\tau-t,x)\overline{\p_\nu v_j}dxdt.\end{aligned}$$
Then \eqref{l8a} implies
$$\int_0^{2T+\tau}\int_{\gamma'} h\overline{\p_\nu v_1}dxdt=\int_0^{2T+\tau}\int_{\gamma'} h\overline{\p_\nu v_2}dxdt.$$
As $h \in \mathcal C^\infty_0((0,2T+\tau)\times\gamma')$ is arbitrary we deduce that  $\p_\nu v_1=\p_\nu v_2$ on $(0,2T+\tau)\times\gamma'$. 
Thus, fixing $v=v_1-v_2$  and using \eqref{t5a}, we deduce that   $\p_t^2v-\Delta_x v+q_1v=0$ on $(0,2T+\tau)\times\Omega(\gamma',\tau)$, $v_{|(0,2T+\tau)\times\gamma'}=\p_\nu v_{|(0,2T+\tau)\times\gamma'}=0$. In particular, for any $t\in(\tau,2T)$, we have
$\p_t^2v-\Delta_xv +q_1v=0$ on $(t-\tau,t+\tau)\times\Omega(\gamma',\tau)$, $v_{|[t-\tau,t+\tau]\times\gamma'}=\p_\nu v_{|[t-\tau,t+\tau]\times\gamma'}=0$. Thus, the unique continuation property of Theorem \ref{t3} implies that, for any $t\in(\tau,2T)$, we have $v_1(t,\cdot)=v_2(t,\cdot)$ on $\Omega(\gamma',\tau)\supset B$. On the other hand, using the fact that $$\textrm{supp}(F)\subset (\tau,T)\times B\subset(\tau,T)\times\Omega,$$ one can check that the restriction of $v_j$ to $(0,\tau)\times\Omega$ solves the problem
$$\left\{\begin{array}{ll}\partial_t^2v_j-\Delta_x v_j+q_j(x)v_j=0,\quad &\textrm{in}\ (0,\tau)\times\Omega,\\  v_j(0,\cdot)=0,\quad \partial_tv_j(0,\cdot)=0,\quad &\textrm{in}\ \Omega,\\ v_j=0,\quad &\textrm{on}\ (0,\tau)\times\p\Omega.\end{array}\right.$$
which implies that $v_1(t,x)=0=v_2(t,x)$, $(t,x)\in(0,\tau)\times\Omega$. Combining these two identities we deduce  \eqref{l8b}.

\end{proof}

We extend the notion of domain of influence for any $r > 0$ and any open set $B \subset \Omega$ by setting
\[
\Omega(B,r) = \left\{ x \in \overline{\Omega} : \dist(x,B) \leq r \right\}.
\]
From now on, we fix $\epsilon_0\in(0,\tau/7)$, $x_0\in\Omega(\gamma',\tau-3\epsilon_0)$, dist$(x_0,\Gamma)>3\epsilon_0$, $B=B(x_0,\epsilon_0)$. Note that $\Omega(B,\epsilon_0)\subset int(\Omega(\gamma',\tau))$. 
In the remaining of this text we will prove that \eqref{l8b} implies that $q_1=q_2$ in $\Omega\setminus B$. Note first that according to the finite speed of propagation we have 
\bel{fsp}\textrm{supp}(v_{j,F}(T,\cdot))\subset \Omega(B,r),\quad F\in \mathcal C^\infty_0((T-r,T)\times B).\ee
This is the analogue of Theorem \ref{t2} for solutions of (\ref{eq4}).

We will also need to use global unique continuation in the domain $\Omega \setminus B$.
In the appendix, we discuss unique continuation only under assumptions that allow us to avoid certain arguments of geometric nature. 
For this purpose let $\Omega' \subset \R^n$ be a domain with smooth boundary. We fix $S$ an open subset of $\partial \Omega'$ and for every $x\in\Omega'$, we consider the set $Z_x(S)=\{y\in\overline{S}:\ |x-y|=\textrm{dist}(x,S)\}$. Then, we introduce the following condition on $S$:

\medskip
\noindent
$\ \ $(H) Let $x\in \Omega'$. If $y\in Z_x(S)\cap S$ then $[x,y]:=\{tx+(1-t)y:\ t\in[0,1]\}\subset(\Omega'\cup S)$. Furthermore, if $y\in Z_x(S)\setminus S$ then for every neighborhood $V$ of $y$ in $\overline{S}$ there exists $z\in V\cap S$ such that $[x,z]\subset(\Omega'\cup S)$.

\medskip
As $\Omega$ is convex, this condition holds for any $S \subset \p \Omega$.
Now,  let us recall (as illustrated in Fig.~\ref{FigureConvexeBoule}) that for any $x\in \Omega\setminus B$, 
$y=x_0+\epsilon_0 \frac{x-x_0}{|x-x_0|}$ is the unique element of $\overline{B}$ satisfying dist$(x, B)=|y-x|$. Moreover, since $\Omega$ is convex we have $[x,y]\subset\Omega$ and $[x,y]$ can not meet $B$ since $y$ is the unique element of $\p B$ satisfying dist$(x,\p B)=|y-x|$. Therefore, we have $[x,y]\subset (\Omega\setminus B)\cup\p B$ and $\p B$ satisfies condition (H).
Theorem \ref{t3} with $\Omega$ replaced by $\Omega \setminus B$
and $S=\p B$ implies the following analogue of Corollary \ref{c1}.

\medskip
\begin{figure}[h]
\includegraphics[width=.4\textwidth]{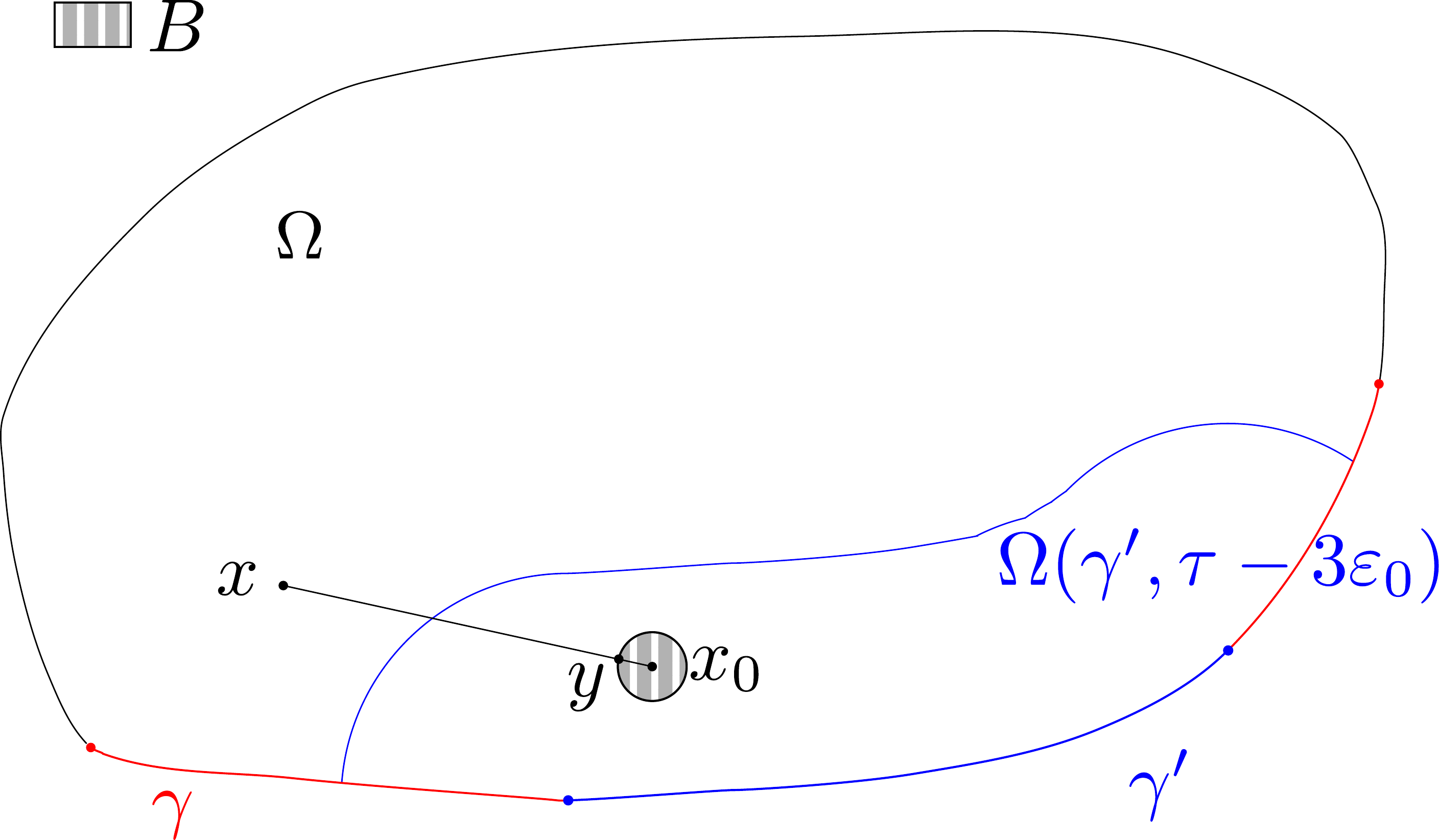}
\caption{\label{FigureConvexeBoule}}
\end{figure}

\begin{lem}\label{ll1} 
For $j=1,2$, the set 
\bel{ll1a}\{v_{j,F}(T,\cdot):\ F\in \mathcal C^\infty_0((T-r,T)\times B)\}\ee
is dense in $L^2(\Omega(B,r))$.
\end{lem}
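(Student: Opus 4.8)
The plan is to mimic the duality/unique-continuation argument that underlies Corollary \ref{c1}, but now in the domain $\Omega \setminus B$ with control exerted from the internal sphere $S = \p B$. First I would observe that the set \eqref{ll1a} is a linear subspace of $L^2(\Omega(B,r))$ — this uses the finite speed of propagation \eqref{fsp} to guarantee containment. To prove density, it suffices to show that any $w \in L^2(\Omega(B,r))$ orthogonal to every $v_{j,F}(T,\cdot)$ must vanish. So I would fix such a $w$, extend it by zero to $\Omega \setminus B$, and let $W$ solve the backward wave equation $(\p_t^2 - \Delta_x + q_j)W = 0$ in $(0,T)\times(\Omega\setminus B)$ with terminal data $W(T,\cdot) = 0$, $\p_t W(T,\cdot) = w$, and homogeneous Dirichlet condition on $(0,T)\times \p(\Omega\setminus B)$. (Here one must be slightly careful: the relevant boundary is $\p\Omega \cup \p B$, and the Dirichlet condition is imposed on all of it.)

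The key step is the integration-by-parts identity: for any $F \in \mathcal C^\infty_0((T-r,T)\times B)$,
\begin{align}\notag
0 = \langle w, v_{j,F}(T,\cdot)\rangle_{L^2(\Omega(B,r))}
= \int_0^T\int_{\Omega\setminus B} W\, \overline{F}\, dx\,dt + (\text{boundary terms on } (0,T)\times\p B).
\end{align}
Because $W$ vanishes on $(0,T)\times\p B$ by construction and $F$ is supported in $(T-r,T)\times B \subset (0,T)\times\overline{B}$, the interior term is $\int_0^T\int_B W\overline F$, and since this vanishes for all such $F$ we conclude $W = 0$ on a space-time region of the form $\{(t,x):\ x \in B,\ \dist(x,\p B) < T - t,\ t \in (T-r,T)\}$ — equivalently, $W$ and its normal derivative vanish on an open piece of $(0,T)\times\p B$. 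More precisely I would arrange the argument so that $W_{|(T-r,T)\times\p B} = \p_\nu W_{|(T-r,T)\times\p B} = 0$, which is the Cauchy data hypothesis \eqref{t3a} of Theorem \ref{t3} with $S = \p B$ and the ambient domain $\Omega \setminus B$.

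Now I would apply Theorem \ref{t3} (validated in $\Omega\setminus B$ with $S=\p B$ by the discussion preceding the lemma, where condition (H) was checked for $\p B$), reversing time so that the interval $(T-r,T)$ plays the role of $(0,2\tau)$: this yields $W(T-r/2,\cdot) = \p_t W(T-r/2,\cdot) = 0$ on $\Omega(\p B, r/2) \cap (\Omega\setminus B)$, and more generally, sliding the time window, that $W(t,\cdot)$ and $\p_t W(t,\cdot)$ vanish on the domain of influence $\Omega(\p B, T - t)$ for $t$ in a suitable range — in particular $W(T,\cdot) = w$ vanishes on $\Omega(\p B, 0) \cap \Omega(B,r)$, and iterating (or taking $t$ down to $T-r$) forces $w = 0$ on all of $\Omega(B,r)$. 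The main obstacle, and the point requiring the most care, is the bookkeeping at the inner boundary $\p B$: one must confirm that the backward solution $W$ genuinely has enough regularity for the integration by parts and the normal-trace $\p_\nu W_{|(0,T)\times\p B}$ to make sense (this is exactly the $L^2$ normal-trace statement quoted from \cite{LLT} in Section 2, applied on $\Omega\setminus B$), and that the domains of influence $\Omega(\p B, T-t)$ sweep out all of $\Omega(B,r)$ as $t$ decreases from $T$ to $T-r$, which holds precisely because $\Omega(B,r) = \Omega(\p B, r) \cup B$ and $B$ is reached once $T - t \geq r$ — combined with the convexity of $\Omega$ (so that $\dist(\cdot,\p B)$ behaves well on $\Omega\setminus B$) this closes the argument.
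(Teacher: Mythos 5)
Your overall strategy --- a duality argument against an adjoint state followed by unique continuation from $\p B$ in $\Omega\setminus B$ --- has the same flavour as the paper's, but the way you pose the adjoint state breaks the key identity. You solve for $W$ on $(0,T)\times(\Omega\setminus B)$ with a Dirichlet condition on $\p B$. But the source $F$ is supported in $(T-r,T)\times B$, i.e.\ \emph{inside} $B$, where your $W$ is not defined: the ``interior term'' $\int_0^T\int_B W\overline{F}\,dx\,dt$ in your integration by parts is meaningless, while $\int_0^T\int_{\Omega\setminus B}W\overline{F}\,dx\,dt$ vanishes identically and carries no information. What the duality in $\Omega\setminus B$ actually yields --- since $v_{j,F}$ satisfies no boundary condition on $\p B$ --- is an identity of the form $\langle w,v_{j,F}(T,\cdot)\rangle_{L^2(\Omega\setminus B)}=\pm\int_0^T\int_{\p B}v_{j,F}\,\overline{\p_\nu W}\,d\sigma\,dt$, and to deduce $\p_\nu W=0$ on $(T-r,T)\times\p B$ from this you would need a density statement for the traces ${v_{j,F}}_{|\p B}$, which is itself a controllability result; the argument becomes circular. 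A second gap: $\Omega(B,r)\supset\overline{B}$, so the orthogonal element $w$ may be nonzero on $B$ itself, and an argument confined to $\Omega\setminus B$ never sees that component.

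The paper avoids both problems by solving the adjoint problem $e_j$ on all of $\Omega$, with Dirichlet data on $\p\Omega$ only and terminal data $e_j(T,\cdot)=0$, $\p_te_j(T,\cdot)=h$. The duality pairing is then simply $0=\int_0^T\int_\Omega F\,\overline{e_j}\,dx\,dt$, which for arbitrary $F\in\mathcal C^\infty_0((T-r,T)\times B)$ forces $e_j=0$ on the open space--time cylinder $(T-r,T)\times B$. After the even reflection $E_j(t,\cdot)=e_j(2T-t,\cdot)$ for $t>T$, one gets $E_j=0$ on $(T-r,T+r)\times B$, hence vanishing Cauchy data on $(T-r,T+r)\times\p B$, and a single application of Theorem \ref{t3} in $\Omega\setminus B$ with $S=\p B$ and $2\tau=2r$ (this is where condition (H) for $\p B$ enters) gives $h=\p_tE_j(T,\cdot)=0$ on $\Omega(\p B,r)$; together with $e_j\equiv0$ on the cylinder over $B$ this kills $h$ on all of $\Omega(B,r)$. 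No sliding time window is needed. If you re-pose your $W$ on the whole of $\Omega$, your argument collapses to exactly this one.
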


For convenience of the reader, we have included a proof of this lemma in the appendix. 
The proof is analogous with the proof of Corollary \ref{c1}.
Let us also show that  the norm of solution of \eqref{eq4} with $F\in\mathcal C^\infty_0((\tau ,T)\times B)$ is determined by condition \bel{l9a}L_{1,\Omega(B,\epsilon_0)}=L_{2,\Omega(B,\epsilon_0)},\ee which follows from \eqref{l8b} and the fact that $\Omega(B,\epsilon_0)\subset int(\Omega(\gamma',\tau))$.
We have the following analogue of (\ref{id2})

\begin{lem}\label{l10} 
Condition \eqref{l9a} implies that, for all $t,s\in[0 ,T]$ and all $F_1,F_2\in \mathcal C^\infty_0((\tau,T)\times B)$, we have
\bel{l10a}\left\langle v_{1,F_1}(t,\cdot),v_{1,F_2}(s,\cdot)\right\rangle_{L^2(\Omega)}=\left\langle v_{2,F_1}(t,\cdot),v_{2,F_2}(s,\cdot)\right\rangle_{L^2(\Omega)}.\ee

\end{lem}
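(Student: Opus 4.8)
The plan is to mimic the passage from boundary spectral data to inner products of solutions carried out in Lemmas \ref{l4}–\ref{l_innerp}, but now for the source-to-solution operator $L_{j,B}$ rather than the boundary Dirichlet-to-Neumann map. The key point is a Blagove\v{s}\v{c}enskii-type identity: for $F_1, F_2 \in \mathcal C^\infty_0((\tau,T)\times B)$, the inner product $\left\langle v_{j,F_1}(t,\cdot),v_{j,F_2}(s,\cdot)\right\rangle_{L^2(\Omega)}$ can be written, via integration by parts in the cylinder and the wave equation \eqref{eq4}, as a double space-time integral involving only $F_1$, $F_2$, and the restriction of $v_{j,F_1}$ (equivalently $v_{j,F_2}$) to the support of the sources, i.e.\ to $[0,T]\times B$. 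Since $\supp(F_i) \subset (\tau,T)\times B \subset (\tau,T)\times \Omega(B,\epsilon_0)$, and condition \eqref{l9a} gives $v_{1,F_i} = v_{2,F_i}$ on $[0,2T]\times \Omega(B,\epsilon_0)$, every term in this identity is the same for $j=1$ and $j=2$, which yields \eqref{l10a}.

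Concretely, I would introduce the function $w_j(t,s) = \left\langle v_{j,F_1}(t,\cdot),v_{j,F_2}(s,\cdot)\right\rangle_{L^2(\Omega)}$ and compute $(\partial_t^2 - \partial_s^2) w_j$. Using $\partial_t^2 v_{j,F_1} = \Delta_x v_{j,F_1} - q_j v_{j,F_1} + F_1$ and the analogous identity for $v_{j,F_2}$, and integrating by parts twice in $x$ (the boundary terms vanish because both solutions satisfy the homogeneous Dirichlet condition on $\p\Omega$), the terms involving $\Delta_x$ and $q_j$ cancel, leaving
\[
(\partial_t^2 - \partial_s^2) w_j(t,s) = \left\langle F_1(t,\cdot), v_{j,F_2}(s,\cdot)\right\rangle_{L^2(\Omega)} - \left\langle v_{j,F_1}(t,\cdot), F_2(s,\cdot)\right\rangle_{L^2(\Omega)}.
\]
Because $\supp(F_1(t,\cdot)), \supp(F_2(s,\cdot)) \subset B$, the right-hand side depends only on $v_{j,F_2}$ and $v_{j,F_1}$ restricted to $B$, hence on $L_{j,B}F_1$ and $L_{j,B}F_2$; by \eqref{l9a} (which is stronger: it gives equality on $\Omega(B,\epsilon_0)\supset B$) it is the same for $j=1,2$. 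Together with the initial conditions $w_j(0,s) = \partial_t w_j(0,s) = 0$ (from $v_{j,F_1}(0,\cdot)=\partial_t v_{j,F_1}(0,\cdot)=0$) and $w_j(t,0) = \partial_s w_j(t,0) = 0$, one solves this one-dimensional wave-type equation in the $(t,s)$-plane explicitly by d'Alembert's formula on the characteristic triangle, expressing $w_j(t,s)$ as an integral of the known right-hand side over $\{(t',s') : |t'-s'| \leq |t-s|,\ t'+s' \leq t+s\} \cap ([0,T]\times[0,T])$. Since the integrand coincides for $j=1$ and $j=2$, so does $w_j(t,s)$, which is \eqref{l10a}.

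The main technical obstacle, as in Lemma \ref{l4}, is a regularity/density issue: the integration by parts requires enough smoothness of $v_{j,F}$, so I would first establish the identity for $F_1, F_2 \in \mathcal C^\infty_0((\tau,T)\times B)$ (where \eqref{eq4} has an $H^2$ solution, since the source is smooth and compactly supported in the interior, away from $\p\Omega$), and note that the statement is already phrased for exactly this class of sources, so no further density argument is needed. A minor point to check is that the characteristic-triangle domain of integration in the $(t,s)$-plane stays within $[0,T]\times[0,T]$ for $t,s\in[0,T]$, so that only values of the right-hand side on times in $[0,T]$ — where \eqref{l9a} applies — are used; this is immediate since the triangle with apex at $(t,s)$ and base on $\{t'+s' = 0\}\cup(\{t'=0\}\cup\{s'=0\})$ is contained in $[0,t]\times[0,s] \subset [0,T]^2$.
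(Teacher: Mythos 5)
Your proposal is correct and follows essentially the same route as the paper: both derive the identity $(\partial_t^2-\partial_s^2)w_j(t,s)=\left\langle F_1(t,\cdot),L_{j,\Omega(B,\epsilon_0)}F_2(s,\cdot)\right\rangle_{L^2(B)}-\left\langle L_{j,\Omega(B,\epsilon_0)}F_1(t,\cdot),F_2(s,\cdot)\right\rangle_{L^2(B)}$ by integration by parts, observe that \eqref{l9a} makes the right-hand side independent of $j$, and conclude by uniqueness for the resulting $1+1$ wave equation with vanishing Cauchy data on the two axes (the paper via a unique continuation plus IBVP argument, you via a d'Alembert-type representation). Your final containment claim (that the dependence region lies in $[0,t]\times[0,s]$) is not accurate as stated, but this is harmless: the right-hand sides coincide for all $t,s\in[0,2T]$, which is the full range on which $L_{j,\Omega(B,\epsilon_0)}$ is defined.
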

\begin{proof}
Consider for $j=1,2$ and for all $t,s\in(0,2T]$ the function $w_j(t,s)=\left\langle v_{j,F_1}(t,\cdot),v_{j,F_2}(s,\cdot)\right\rangle_{L^2(\Omega)}$.
Integrating by parts, for all $t,s\in(0 ,2T)$,  we find
$$\begin{aligned} &(\p_t^2-\p_s^2)w_j(t,s)=\left\langle \p_t^2v_{j,F_1}(t,\cdot),v_{j,F_2}(s,\cdot)\right\rangle_{L^2(\Omega)}-\left\langle v_{j,F_1}(t,\cdot),\p_s^2v_{j,F_2}(s,\cdot)\right\rangle_{L^2(\Omega)}\\
\ &=\left\langle (\Delta_x-q_j)v_{j,F_1}(t,\cdot)+F_1(t,\cdot),v_{j,F_2}(s,\cdot)\right\rangle_{L^2(\Omega)}-\left\langle v_{j,F_1}(t,\cdot),(\Delta_x-q_j)v_{j,F_2}(s,\cdot)+F_2(s,\cdot)\right\rangle_{L^2(\Omega)}\\
\ &=\left\langle F_1(t,\cdot),L_{j,\Omega(B,\epsilon)}F_2(s,\cdot)\right\rangle_{L^2( B)}-\left\langle L_{j,\Omega(B,\epsilon)}F_1(t,\cdot),F_2(s,\cdot)\right\rangle_{L^2( B)}.\end{aligned}$$
Then, applying \eqref{l9a}, we deduce that, for all $t,s\in(0,2T)$, we have
\bel{bb}(\p_t^2-\p_s^2)w_1(t,s)=\left\langle F_1(t,\cdot),L_{1,\Omega(B,\epsilon)}F_2(s,\cdot)\right\rangle_{L^2( B)}-\left\langle L_{1,\Omega(B,\epsilon)}F_1(t,\cdot),F_2(s,\cdot)\right\rangle_{L^2( B)}=(\p_t^2-\p_s^2)w_2(t,s).\ee
Moreover, for $j=1,2$, we have $w_j(0,s)=\p_tw_j(0,s)=w_j(t,0)=\p_sw_j(t,0)=0$. Thus, applying \eqref{bb}, we deduce that $w=w_1-w_2$ solves the system associated with the $1+1$ wave equation
$$\left\{\begin{array}{ll}\partial_t^2w-\p_s^2w=0,\quad &\textrm{in}\ (0,2T)\times(0,2T),\\  w(0,\cdot)=0,\quad \partial_tw(0,\cdot)=0,\quad &\textrm{in}\ (0,2T),\\  w(\cdot,0)=0,\quad \partial_sw(\cdot,0)=0,\quad &\textrm{in}\ (0,2T).\end{array}\right.$$

From unique continuation we can deduce that 
\[
w(T,s) = \partial_t w(T,s) = 0, \quad \forall s \in \Omega(\{0\},T) = (0,T).
\]

%Since $w(0,\cdot)=\partial_tw(0,\cdot)=0$, we can extend, by symmetry at $t=0$, $w$ to $\tilde{w}\in H^1((-2T,2T)\times(0,2T))$ defined by
%$$\tilde{w}(t,s)=\left\{\begin{array}{l} w(t,s),\quad (t,s)\in(0,2T)\times(0,2T),\\ w(-t,s),\quad (t,s)\in(-2T,0)\times(0,2T).\end{array}\right.$$
%It is clear that  $\tilde{w}$ solves $\p_s^2\tilde{w}(t,s)-\partial_t^2\tilde{w}(t,s)=0$, for  $(s,t)\in (0,2T)\times(-2T,2T)$, and $\tilde{w}(t,0)=\p_s\tilde{w}(t,0)=0$ for $t\in(-2T,2T)$. Thus,  from the finite speed of propagation we deduce that for all $t\in(-T,T)$, we have $\tilde{w}(t,T)=0$ which implies that for all $t\in(0,T)$,  $w(t,T)=0$. 
Therefore, $w$ solves 
$$\left\{\begin{array}{ll}
\partial_s^2w-\p_t^2w=0,\quad &\textrm{in}\ (0,T)\times(0,T),\\  
w(0,s) = w(T,s) = 0, \quad\quad &s \in (0,T),\\
w(t,0) = \partial_s w(t,0) = 0, \quad\quad &t \in (0,T).
\end{array}\right.$$

Then, the uniqueness of this initial boundary value problem implies that $w_{|(0,T)\times(0,T)}=0$ which, according to the continuity of $w$ with respect to $(t,s)\in[0,T]^2$,  implies that for all $t,s\in[0,T]$  we have $w_1(t,s)=w_2(t,s)$. 
This proves \eqref{l10a}.

\end{proof}

\begin{proof}[Proof of Theorem \ref{t1}]

In a similar way as in the previous section, for any $x\in int(\Omega(B,t)\setminus B)$, $t\in(0,T]$, we consider $y\in\p B$ the unique element of $B$ such that  dist$(x,B)=|x-y|=s\in(0,\tau)$. One can check that there exist $z_1,\ldots,z_n\in B$ such that $(x-z_1,x-z_2,\ldots,x-z_n)$ is a basis of $\R^n$. Moreover,  we can choose  $z_1,\ldots,z_n\in B$ such that for
\bel{Ax11}B_{x,\epsilon}':=\bigcap_{j=1}^nB(z_j,|x-z_j|+\epsilon),\quad A_{x,\epsilon}':=B_{x,\epsilon}'\setminus \Omega(B,s-\epsilon),\ee
the family $(A_{x,\epsilon}')_{\epsilon>0}$ is of bounded eccentricity and 
\bel{Ax22}\lim_{\epsilon\to0}A_{x,\epsilon}'=\{x\}.\ee
Combining the density of the set \eqref{l10a} with the arguments used in Lemma \ref{l_ptprod}, we obtain 
$$
\abs{A_{x,\epsilon}'}^{-1}\left\langle \mathds{1}_{A_{x,\epsilon}'}v_{1,F}(\tau,\cdot),v_{1,G}(\tau,\cdot)\right\rangle_{L^2(\Omega)}.
= \abs{A_{x,\epsilon}'}^{-1}\left\langle \mathds{1}_{A_{x,\epsilon}'}v_{2,F}(\tau,\cdot),v_{2,G}(\tau,\cdot)\right\rangle_{L^2(\Omega)}
$$
for all $F,G \in \mathcal C_0^\infty((\tau, T) \times B)$.
After taking the limit $\epsilon \to 0$, we obtain 
\bel{t1c}v_{1,F}(t,x)\overline{v_{1,G}(t,x)}=v_{2,F}(t,x)\overline{v_{2,G}(t,x)},\quad t\in(\tau,T],\ \ x\in  \Omega,\ F,G\in \mathcal C^\infty_0((\tau,T)\times B).\ee
From now on our goal will be to use this identity to conclude. For this purpose, in a similar way to Theorem \ref{t5} we will use special solutions that we will introduce next in order to recover
$v_{1,F}(T,\cdot)$ for any $F\in \mathcal C^\infty_0((\tau,T)\times B)$. Then we will complete the proof. Let us first fix $x_1\in\Omega\setminus B$ and consider $s_1=$dist$(x_1,B)$. Since $T-\tau> \diam(\Omega)$ we know that $s_1+\epsilon_0=|x_1-x_0|\in (0,T-\tau)$. Now consider $$\delta_1\in(0,s_1/2)\cap(0,\epsilon_0/2)\cap(0,\textrm{dist}(x_1,\p\Omega)/2)(0,(T-s_1-\epsilon_0-\tau)/2).$$ We fix also
$$\omega=\frac{x_1-x_0}{|x_1-x_0|},\quad \delta_2:=\inf_{t\in[0,s_1+\epsilon_0+\delta_1]}\textrm{dist}\left(x_0+t\omega,\p\Omega\right)$$
and we consider  $\delta=\frac{\min(\delta_1,\delta_2)}{4}$. Note that according to the definition of $\delta_1$ we have $\delta_2>0$. Now let $\chi\in\mathcal C^\infty_0(B(0,\delta))$, $\delta'\in(0,\delta)$ and define
$$a(t,x):= \chi\left(x-x_0-(t-T+s_1+\epsilon_0+\delta')\omega\right).$$
Then, we consider,
$$u_j(t,x)=a(t,x)e^{i\sigma\left(t-x\cdot\omega\right)}+R_{j,\sigma}(t,x),\quad (t,x)\in(0,\tau)\times\Omega,\ j=1,2,\ \sigma>1,$$
where $R_{j,\sigma}$ solves
\bel{t1d}\left\{\begin{array}{ll}\partial_t^2R_j-\Delta_x R_j+q_j(x)R_j=-e^{i\sigma\left(t-x\cdot\omega\right)}(\p_t^2-\Delta_x+q_j)a,\quad &\textrm{in}\ (0,T)\times\Omega,\\  R_j(T-s_1-\epsilon_0,\cdot)=0,\quad \partial_tR_j(T-s_1-\epsilon_0,\cdot)=0,\quad &\textrm{in}\ \Omega,\\ R_j=0,\quad &\textrm{on}\ (0,T)\times\p\Omega.\end{array}\right.\ee
It follows that $\partial_t^2u_j-\Delta_x u_j+q_j(x)u_j=0$ in $(0,T)\times\Omega$ and one can check that
\bel{t1e}\norm{R_{j,\sigma}}_{L^2((0,\tau)\times\Omega)}\leq C\sigma^{-1},\ee
with $C$ independent of $\sigma$. Then, we consider $\beta\in\mathcal C^\infty(\R;[0,1])$ satisfying $\beta(t)=0$ for $t\in(-\infty, T-s_1-\epsilon_0]$ and $\beta(t)=1$ for $t\in[ T-s_1-\epsilon_0+\delta,+\infty)$. Then, we introduce $$w_j(t,x):=\beta(t)u_j(t,x).$$
It is clear that
\bel{t1f} w_j(t,x)=0,\quad (t,x)\in [0,T-s_1-\epsilon_0]\times\Omega.\ee
Moreover, in  view of \eqref{t1d}, we know that
$$w_j(t,x)=a(t,x)=\overline{\chi}\left(x-x_0-(t-T+s_1+\epsilon_0+\delta')\omega\right)e^{i\sigma\left(t-x\cdot\omega\right)},\quad (t,x)\in(T-s_1-\epsilon_0,T)\times \p\Omega.$$
Using the fact that
\bel{tt}\textrm{dist}(x_0+(t-T+s_1+\epsilon_0+\delta')\omega,\p\Omega)\geq \delta_2\geq4\delta,\quad t\in [T-s_1-\epsilon_0,T]\ee
we deduce that
$$a(t,x)=0,\quad (t,x)\in [T-s_1-\epsilon_0,T]\times\p\Omega.$$
Combining this with, \eqref{t1f} we deduce that $$w_j(t,x)=0,\quad (t,x)\in(0,T)\times\p\Omega.$$
In the same way, \eqref{t1f} implies that $$w_j(0,x)=\p_tw_j(0,x)=0,\quad x\in\Omega$$
and fixing $G_j=2\beta'(t)\p_tu_j+\beta''(t)u_j$, we deduce that $w_j=v_{j,G_j}$. Now let us show that $G_1=G_2=G$ with supp$(G)\subset (\tau,T)\times B$. For this purpose note first that
\bel{t1g}G_1-G_2=2\beta'(t)\p_t(R_{1,\sigma}-R_{2,\sigma})+\beta''(t)(R_{1,\sigma}-R_{2,\sigma}).\ee
On the other hand, we have
$$|(t-T+s_1+\epsilon_0+\delta')\omega+y|\leq 2\delta+|t-T+s_1+\epsilon_0|\leq 3\delta,\quad y\in B(0,\delta),\ t\in[T-s_1-\epsilon_0,T-s_1-\epsilon_0+\delta]$$
and we deduce that  $$\textrm{supp}(a)\cap[T-s_1-\epsilon_0,T-s_1-\epsilon_0+\delta]\times\R^n\subset[T-s_1-\epsilon_0,T-s-\epsilon_0+\delta]\times B(x_0,3\delta).$$ Thus, condition \eqref{t5a} implies that, for all $(t,x)\in (T-s_1-\epsilon_0,T-s_1-\epsilon_0+\delta)\times\Omega$, we get
$$(\p_t^2-\Delta_x+q_j)R_{j,\sigma}(t,x)=-e^{i\sigma\left(t-x\cdot\omega\right)}(\p_t^2-\Delta_x+q_j)a(t,x)=-e^{i\sigma\left(t-x\cdot\omega\right)}(\p_t^2-\Delta_x+q_1)a(t,x).$$
Moreover, condition \eqref{fsp} implies \bel{tt1d}\textrm{supp}(R_{j,\sigma})\cap[T-s_1-\epsilon_0,T-s_1-\epsilon_0+\delta]\times\Omega\subset [T-s_1-\epsilon_0,T-s_1-\epsilon_0+\delta]\times \Omega(B(x_0,3\delta),\delta)\subset (\tau,T)\times B\ee
and, in virtue of \eqref{t5a}, we deduce that
$$(\p_t^2-\Delta_x+q_j)R_{j,\sigma}(t,x)=(\p_t^2-\Delta_x+q_1)R_{j,\sigma}(t,x),\quad (t,x)\in(T-s_1-\epsilon_0,T-s_1-\epsilon_0+\delta)\times\Omega,\ j=1,2.$$
Combining this with \eqref{t1d}, we deduce that the restriction of $R_{j,\sigma}$, $j=1,2$, to $(T-s-\epsilon_0,T-s-\epsilon_0+\delta)\times\Omega$ solves 
$$\left\{\begin{array}{ll}\partial_t^2R-\Delta_x R+q_1(x)R=-e^{i\sigma\left(t-x\cdot\omega\right)}(\p_t^2-\Delta_x+q_1)a,\quad &\textrm{in}\ (T-s-\epsilon_0,T-s-\epsilon_0+\delta)\times\Omega,\\  R(T-s_1-\epsilon_0,\cdot)=0,\quad \partial_tR(T-s_1-\epsilon_0,\cdot)=0,\quad &\textrm{in}\ \Omega,\\ R=0,\quad &\textrm{on}\ (T-s-\epsilon_0,T-s-\epsilon_0+\delta)\times\p\Omega.\end{array}\right.$$
The uniqueness of solutions for this IBVP implies that 
$$R_{1,\sigma}(t,x)=R_{2,\sigma}(t,x),\quad (t,x)\in(T-s_1-\epsilon_0,T-s_1-\epsilon_0+\delta)\times\Omega.$$
Combining this with \eqref{t1g}, we deduce  that $G_1=G_2=G$ and \eqref{tt1d} implies that supp$(G)\subset (\tau,T)\times B$. 
Applying \eqref{t1c},  one can check  that, for all $F\in \mathcal C^\infty_0((\tau,T)\times B)$ and $\psi\in\mathcal C^\infty_0((\tau,T))$, we have
$$v_{1,F}(t,x)\overline{w_1(t,x)}e^{i\sigma(t+x\cdot\omega)}\psi(t)=v_{2,F}(t,x)\overline{w_2(t,x)}e^{i\sigma(t-x\cdot\omega)}\psi(t),\quad t\in[\tau,T],\ \ x\in  \Omega.$$
Integrating both sides of this expression, we get
$$\begin{array}{l}\int_\tau^T\int_\Omega v_{1,F}\beta(t)\overline{a}\psi(t)dxdt+\int_\tau^T\int_\Omega v_{1,F}\beta(t)\overline{R_{1,\sigma}}e^{i\sigma(t-x\cdot\omega)}\psi(t)dxdt\\
=\int_\tau^T\int_\Omega v_{2,F}\overline{a}\beta(t)\psi(t)dxdt+\int_\tau^T\int_\Omega v_{2,F}\overline{R_{2,\sigma}}e^{i\sigma(t-x\cdot\omega)}\beta(t)\psi(t)dxdt.\end{array}$$
Then, in view of \eqref{t1e}, sending $\sigma\to+\infty$ and using the fact that $\beta=0$ on $[0,\tau]$, we get
\bel{t1k}\int_0^T\int_\Omega v_{1,F}(t,x)\overline{a(t,x)}\psi(t)\beta(t)dxdt=\int_0^T\int_\Omega v_{2,F}(t,x)\overline{a(t,x)}\psi(t)\beta(t)dxdt.\ee
Consider $\theta\in\mathcal C^\infty_0\left(\left(-\delta',\delta'\right)\right)$ and fix $\psi(t):=\theta\left(t-T+\delta'\right)$. 
Note that supp$(\psi)\subset \left(T-2\delta',T\right)$ and \eqref{tt} implies that,
 for all $t\in(0,T)$, supp$(\beta(t)a(t,\cdot)\psi(t))\subset\Omega$. Thus, \eqref{t1k} becomes
$$\int_\R\int_{\R^n} \beta(t)v_{1,F}(t,x)\overline{a(t,x)}\psi(t)dxdt=\int_\R\int_{\R^n} \beta(t)v_{2,F}(t,x)\overline{a(t,x)}\psi(t)dxdt.$$
Making the substitution $z=x-x_0-(t-T+s_1+\epsilon_0+\delta')\omega$, we obtain
$$\begin{aligned}\int_\R\int_{\R^n} \beta(t)v_{1,F}\left(t,z+x_0+(t-T+s_1+\epsilon_0+\delta')\omega\right)\chi(z)\psi(t)dzdt\\
=\int_\R\int_{\R^n} \beta(t)v_{2,F}\left(t,z+x_0+(t-T+s_1+\epsilon_0+\delta')\omega\right)\chi(z)\psi(t)dzdt\end{aligned}$$
and making the substitution $s=t-T+\delta' $, we find 
$$\begin{aligned}\int_\R\int_{\R^n} \beta(s+T-\delta') v_{1,F}\left(s+T-\delta,z+x_0+\left(s+s_1+\epsilon_0\right)\omega\right)\chi(z)\theta(s)dzdt\\
=\int_\R\int_{\R^n}\beta(s+T-\delta') v_{2,F}\left(s+T-\delta,z+x_0+\left(s+s_1+\epsilon_0\right)\omega\right)\chi(z)\theta(s)dzdt.\end{aligned}$$
Allowing $\chi\in\mathcal C^\infty_0(B(0,\delta))$, $\theta\in\mathcal C^\infty_0((-\delta',\delta'))$ to be arbitrary, we deduce that for almost every $z\in B(0,\delta)$ we have
$$v_{1,F}\left(s+T-\delta',z+x_0+\left(s+s_1+\epsilon_0\right)\omega\right)=v_{2,F}\left(s+T-\delta',z+x_0+\left(s+s_1+\epsilon_0\right)\omega\right),\quad s\in(-\delta',\delta')$$
which, by fixing $s=0$, implies that
$$v_{1,F}\left(T-\delta',z+x_0+(\epsilon_0+s_1)\omega\right)=v_{2,F}\left(T-\delta',z+x_0+(\epsilon_0+s_1)\omega\right).$$
Then, using the fact that $x_1=x_0+(\epsilon_0+s_1)\omega$, we deduce that 
$$v_{1,F}\left(T-\delta',z+x_1\right)=v_{2,F}\left(T-\delta',z+x_1\right),\quad\textrm{a.e. }z\in B(0,\delta).$$
Sending $\delta'\to0$,  we prove that
$$v_{1,F}\left(T,z\right)=v_{2,F}\left(T,z\right),\quad\textrm{a.e. }z\in B(x_1,\delta).$$
Now allowing $x_1\in \Omega\setminus B$ to be arbitrary we deduce that 
$$v_{1,F}(T,x)=v_{2,F}(T,x),\quad \ x\in \Omega\setminus B.$$
Then, \eqref{l9a} implies
$$v_{1,F}(T,x)=v_{2,F}(T,x),\quad  x\in \Omega$$
and we get
\bel{t1m}v_{2,F}(T,\cdot)=v_{1,F}(T,\cdot),\quad F\in \mathcal C^\infty_0((\tau,T)\times B).\ee
Using this identity we will complete the proof of the theorem. For this purpose, note first that repeating the arguments of Lemma \ref{l4} one can check that 
$$\left\langle v_{1,F}(T,\cdot),\phi_{1,k}\right\rangle_{L^2(\Omega)}=\left\langle v_{2,F}(T,\cdot),\phi_{2,k}\right\rangle_{L^2(\Omega)},\quad F\in \mathcal C^\infty_0((\tau,T)\times B),\ k\in\mathbb N^*.$$
Then, \eqref{t1m} implies that
$$\left\langle v_{1,F}(T,\cdot),\phi_{1,k}-\phi_{2,k}\right\rangle_{L^2(\Omega)}=0,\quad F\in \mathcal C^\infty_0((\tau,T)\times B),\ k\in\mathbb N^*$$
and the density results of Lemma \ref{ll1} implies
$$\phi_{1,k}(x)=\phi_{2,k}(x),\quad x\in\Omega,\ k\in\mathbb N^*.$$
Combining this with arguments similar to the end of the proof of Theorem \ref{t5} we deduce that $q_1=q_2$.

\end{proof}

\section{Recovery from the Dirichlet-to-Neumann map}

This section is devoted to proof of Theorem \ref{tt1}. The proof of this result is similar to the one of Theorem \ref{t1} and the boundary spectral data $B(q,\gamma)$ can be replaced by the Dirichlet-to-Neumann map $\Lambda_q$ as far as $T> 2\diam(\Omega)$. The only point that we need to check is the following. 

\begin{lem}\label{l10b} Let $q_1,q_2\in L^\infty(\Omega)$. For $f\in H^1(\Sigma)$ satisfy $f_{|t=0}=0$, supp$(f)\subset [0,T]\times \gamma'$ and, for $j=1,2$, we fix $u_j^f$ be the solution  of \eqref{eq1} with $q=q_j$. 
Condition $\Lambda_{q_1}=\Lambda_{q_2}$ implies that, for all $t,s\in(0,T/2]$ and all $h,f\in \mathcal C^\infty_0((0,T]\times\gamma)$, we have
\bel{l10ab}\left\langle u^h_1(t,\cdot),u^f_1(s,\cdot)\right\rangle_{L^2(\Omega)}=\left\langle u^h_2(t,\cdot),u^f_2(s,\cdot)\right\rangle_{L^2(\Omega)}.\ee
\end{lem}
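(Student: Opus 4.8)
The plan is to follow the same route that produced identity \eqref{id2} in the spectral setting, but starting now from the Dirichlet-to-Neumann map instead of the boundary spectral data. First I would establish a Blagove\v s\v censki-type identity: for $h,f\in\mathcal C^\infty_0((0,T]\times\gamma)$ set $w(t,s):=\langle u_1^h(t,\cdot),u_1^f(s,\cdot)\rangle_{L^2(\Omega)}-\langle u_2^h(t,\cdot),u_2^f(s,\cdot)\rangle_{L^2(\Omega)}$ and compute $(\partial_t^2-\partial_s^2)w$ by integrating by parts in $x$. Since $u_j^h$ and $u_j^f$ solve the wave equation with the \emph{same} potential $q_j$ and vanish initially, the only surviving terms are boundary terms on $\Sigma$ of the form $\int_\gamma h\,\overline{\partial_\nu u_j^f}\,d\sigma$ and $\int_\gamma f\,\overline{\partial_\nu u_j^h}\,d\sigma$ (the Dirichlet data of $u_j^h$, $u_j^f$ being $h$, $f$, which are supported in $\gamma$). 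By hypothesis $\Lambda_{q_1}=\Lambda_{q_2}$ on $(0,T)\times\gamma$, so $\partial_\nu u_1^h=\partial_\nu u_2^h$ and $\partial_\nu u_1^f=\partial_\nu u_2^f$ on $(0,T)\times\gamma$, whence $(\partial_t^2-\partial_s^2)w(t,s)=0$ on $(0,T)\times(0,T)$.

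Next I would record the initial conditions. From $u_j^h(0,\cdot)=\partial_tu_j^h(0,\cdot)=0$ and likewise for $u_j^f$, one gets $w(0,s)=\partial_t w(0,s)=0$ and $w(t,0)=\partial_s w(t,0)=0$ for all $t,s\in[0,T)$. Thus $w$ solves the $1+1$ wave equation $(\partial_t^2-\partial_s^2)w=0$ on the square $(0,T)^2$ with vanishing Cauchy data on both axes $\{t=0\}$ and $\{s=0\}$. By d'Alembert's formula (or the domain-of-dependence argument for the $1+1$ wave equation, exactly as used in the proof of Lemma~\ref{l10}), these conditions force $w\equiv 0$ on the triangle $\{(t,s):\ t+s<T,\ t,s>0\}$; in particular $w(t,s)=0$ for all $t,s\in(0,T/2]$, which is \eqref{l10ab}. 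The continuity of $w$ on $[0,T/2]^2$, which follows from $u_j^h,u_j^f\in\mathcal C([0,T];L^2(\Omega))$, lets one include the endpoints.

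Two routine technical points remain. First, the integration by parts producing the Blagove\v s\v censki identity is justified a priori only for smooth enough sources; one starts with $h,f\in\mathcal C^\infty_0((0,T]\times\gamma)$, for which the solutions lie in $H^2(Q)$ by \cite[Theorem 2.1]{LLT}, carries out the computation there, and then notes that the statement is already phrased for such $h,f$. (If one later needs \eqref{l10ab} for general $f\in H^1(\Sigma)$ vanishing at $t=0$, a density argument using the continuous dependence $f\mapsto u^f\in\mathcal C([0,T];L^2(\Omega))$ suffices, but this is not required here.) Second, one must be careful with the supports in $t$: because $h,f$ are supported in $(0,T]\times\gamma$ the Dirichlet traces of $u_j^h,u_j^f$ vanish near $t=0$, so no contribution appears from the corner of $\Sigma$, and all boundary integrals are genuinely over $(0,T)\times\gamma$ where $\Lambda_{q_1}=\Lambda_{q_2}$ applies. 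I do not expect a serious obstacle: the only mildly delicate point is the passage from the $1+1$ wave equation with doubly vanishing Cauchy data to $w\equiv0$ on the relevant triangle, and that is precisely the finite-speed-of-propagation argument already invoked in Lemma~\ref{l10}, restricted here to the time interval $(0,T/2]$ so that the triangle of determinacy covers the full square $(0,T/2]^2$.
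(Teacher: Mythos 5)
Your proposal is correct and follows essentially the same route as the paper: the paper also forms $w_j(t,s)=\langle u_j^h(t,\cdot),u_j^f(s,\cdot)\rangle_{L^2(\Omega)}$, integrates by parts to show $(\partial_t^2-\partial_s^2)(w_1-w_2)$ equals a difference of boundary pairings with $\Lambda_{q_j}$ that cancels under $\Lambda_{q_1}=\Lambda_{q_2}$, and then concludes from the $1+1$ wave equation with vanishing Cauchy data on both axes (by the same argument as in Lemma~\ref{l10}) that $w_1=w_2$ on $(0,T/2]^2$. Your d'Alembert/domain-of-dependence phrasing of the last step is just a more explicit version of what the paper invokes.
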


The proof is similar with that of Lemma \ref{l10},
however, we give it for the convenience of the reader.

\begin{proof} 
Consider for $j=1,2$ and for all $t,s\in[0,T]$ the function $w_j(t,s)=\left\langle u^h_j(t,\cdot),u^f_j(s,\cdot)\right\rangle_{L^2(\Omega)}$.
Then, integrating by parts, we find
$$\begin{aligned} \p_t^2w_j-\p_s^2w_j&=\left\langle \p_t^2u^h_j(t,\cdot),u^f_j(s,\cdot)\right\rangle_{L^2(\Omega)}-\left\langle u^h_j(t,\cdot),\p_s^2u^f_j(s,\cdot)\right\rangle_{L^2(\Omega)}\\
\ &=\left\langle (\Delta_x-q_j)u^h_j(t,\cdot),u^f_j(s,\cdot)\right\rangle_{L^2(\Omega)}-\left\langle u^h_j(t,\cdot),(\Delta_x-q_j)u^f_j(s,\cdot)\right\rangle_{L^2(\Omega)}\\
\ &=\left\langle \Lambda_{q_j}h(t,\cdot),f(s,\cdot)\right\rangle_{L^2(\gamma)}-\left\langle h(t,\cdot),\Lambda_{q_j}f(s,\cdot)\right\rangle_{L^2(\gamma)}.\end{aligned}$$
Moreover, for $j=1,2$, we have $w_j(0,s)=\p_tw_j(0,s)=w_j(t,0)=\p_sw_j(t,0)=0$. Thus, applying \eqref{l9a}, we deduce that $w=w_1-w_2$ solves the system associated with the $1+1$ wave equation
$$\left\{\begin{array}{ll}\partial_t^2w-\p_s^2w=0,\quad &\textrm{in}\ (0,T)\times(0,T),\\  w(0,\cdot)=0,\quad \partial_tw(0,\cdot)=0,\quad &\textrm{in}\ (0,T),\\  w(\cdot,0)=0,\quad \partial_sw(\cdot,0)=0,\quad &\textrm{in}\ (0,T).\end{array}\right.$$
%Since $w(0,\cdot)=\partial_tw(0,\cdot)=0$, we can extend, by symmetry at $t=0$, $w$ to $\tilde{w}\in H^1((-T,T)\times(0,T)$ defined by
%$$\tilde{w}(t,s)=\left\{\begin{array}{l} w(t,s),\quad (t,s)\in(0,T)\times(0,T),\\ w(-t,s),\quad (t,s)\in(-T,0)\times(0,T).\end{array}\right.$$
%It is clear that  $\tilde{w}$ solves $\p_s^2\tilde{w}(t,s)-\partial_t^2\tilde{w}(t,s)=0$, for  $(s,t)\in (0,T)\times(-T,T)$, and $\tilde{w}(t,0)=\p_s\tilde{w}(t,0)=0$ for $t\in(-T,T)$. Thus,  from the finite speed of propagation we deduce that for all $t\in(-T/2,T/2)$, we have $\tilde{w}(t,T/2)=0$ which implies that for all $t\in(0,T/2)$,  $w(t,T/2)=0$. Therefore, $w$ solves 
%$$\left\{\begin{array}{ll}\partial_t^2w-\p_s^2w=0,\quad &\textrm{in}\ (0,T/2)\times(0,T/2),\\  w(0,\cdot)=0,\quad \partial_tw(0,\cdot)=0,\quad &\textrm{in}\ (0,T/2),\\  w(t,0)=w(t,T/2)=0,\quad\quad &t\in(0,T/2).\end{array}\right.$$
%Then, the uniqueness of this initial boundary value problem implies that $w_{(0,T/2)\times(0,T/2)}=0$ which implies that for all $t,s\in[0,T/2]$  we have $w_1(t,s)=w_2(t,s)$. 

Repeating the argument developed in the proof of Lemma~\ref{l10} leads to $w_{(0,T/2) \times (0,T/2)}=0$.
This proves~\eqref{l10ab}.

\end{proof}

Using \eqref{l10ab} and repeating the arguments used for Theorem \ref{t5}, we can show that
$$ q_1(x)=q_2(x),\quad x\in\Omega(\gamma',\tau)$$
for some $\gamma'\subset\gamma$ and $\tau\in (0,T)$. Fixing $\tau<\frac{T-2\diam(\Omega)}{4}$, we consider the following
\begin{lem}\label{l11} Let $q_1,q_2\in L^\infty(\Omega)$ satisfy \eqref{t5a}. Then the condition $\Lambda_{q_1}=\Lambda_{q_2}$ implies that

\bel{l11a}u_1^f(t,x)=u_2^f(t,x),\quad (t,x)\in(\tau,T-\tau)\times\Omega(\gamma',\tau),\quad f\in\mathcal C^\infty_0((0,T]\times\gamma').\ee
Moreover, we get
\bel{l11b}v_{1,F}(t,x)=v_{2,F}(t,x),\quad (t,x)\in(\tau,T-2\tau)\times B,\quad F\in\mathcal C^\infty_0((2\tau,T-\tau]\times B).\ee
\end{lem}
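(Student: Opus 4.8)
The plan is to obtain both \eqref{l11a} and \eqref{l11b} from the local determination \eqref{t5a} and the equality $\Lambda_{q_1}=\Lambda_{q_2}$, via the unique continuation of Theorem~\ref{t3}, in the same way as the unique continuation step in the proof of Lemma~\ref{l8}; unlike in the boundary spectral data case, here the normal derivative of $u_j^f$ on $\gamma$ is directly encoded in $\Lambda_{q_j}$, so no geometric optics solutions are needed for this step. The key observation is that, since $f$ is supported in $(0,T]\times\gamma'$, both $u_1^f$ and $u_2^f$ equal $f$ on $(0,T)\times\partial\Omega$ and, by $\Lambda_{q_1}=\Lambda_{q_2}$, have equal normal derivatives on $(0,T)\times\gamma'$; hence $v:=u_1^f-u_2^f$ has vanishing Cauchy data on $(0,T)\times\gamma'$. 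Moreover
\[
(\partial_t^2-\Delta_x+q_1)v=(q_2-q_1)\,u_2^f \quad\text{in }(0,T)\times\Omega ,
\]
and by \eqref{t5a} the right-hand side vanishes on $(0,T)\times\Omega(\gamma',\tau)$.

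To prove \eqref{l11a}, I would first record the elementary small-time identity: for $t\leq\tau$, finite speed of propagation (Lemma~\ref{l1}) gives $\supp[u_2^f(t,\cdot)]\subset\Omega(\gamma',t)\subset\Omega(\gamma',\tau)$, so $(q_2-q_1)u_2^f$ vanishes on $(0,\tau]\times\Omega$ and, by uniqueness for \eqref{eq1}, $v\equiv0$ on $(0,\tau]\times\Omega$. Next, for each $t_0\in(\tau,T-\tau)$ the function $v$ solves the homogeneous equation on $(t_0-\tau,t_0+\tau)\times\Omega(\gamma',\tau)$ with $v_{|(t_0-\tau,t_0+\tau)\times\gamma'}=\partial_\nu v_{|(t_0-\tau,t_0+\tau)\times\gamma'}=0$, so Theorem~\ref{t3}, applied over the domain of influence $\Omega(\gamma',\tau)$ exactly as in Lemma~\ref{l8}, gives $v(t_0,\cdot)=0$ on $\Omega(\gamma',\tau)$. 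Combining the two time ranges yields $u_1^f=u_2^f$ on $(0,T-\tau)\times\Omega(\gamma',\tau)$, which is \eqref{l11a}.

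To prove \eqref{l11b}, I would mimic the duality argument of Lemma~\ref{l8}. Fix $F\in\mathcal C^\infty_0((2\tau,T-\tau]\times B)$, let $f\in\mathcal C^\infty_0((0,T)\times\gamma')$, and set $\tilde f(t,x)=f(T-t,x)$. Integrating by parts over $(0,T)\times\Omega$ the product of $u_j^{\tilde f}(T-t,x)$ with $\overline{F(t,x)}=\overline{(\partial_t^2-\Delta_x+q_j)v_{j,F}(t,x)}$, and using that $u_j^{\tilde f}$ and $v_{j,F}$ have vanishing Cauchy data at $t=0$ (so that the temporal boundary terms at $t=0$ and $t=T$ vanish) together with $v_{j,F}=0$ on $\partial\Omega$, one obtains
\[
\int_0^T\int_B u_j^{\tilde f}(T-t,x)\,\overline{F(t,x)}\,dx\,dt=-\int_0^T\int_{\gamma'} f(t,x)\,\overline{\partial_\nu v_{j,F}(t,x)}\,d\sigma(x)\,dt .
\]
Because $F$ is supported in $(2\tau,T-\tau]\times B$, the left-hand side only involves the values of $u_j^{\tilde f}$ on $B$ at times in $[\tau,T-2\tau)\subset(0,T-\tau)$, a range on which $u_1^{\tilde f}=u_2^{\tilde f}$ on $B\subset\Omega(\gamma',\tau)$ by \eqref{l11a} together with the small-time identity above; hence the left-hand side agrees for $j=1,2$, and as $f$ is arbitrary this forces $\partial_\nu v_{1,F}=\partial_\nu v_{2,F}$ on $(0,T)\times\gamma'$. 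Then $v:=v_{1,F}-v_{2,F}$ satisfies $(\partial_t^2-\Delta_x+q_1)v=(q_2-q_1)v_{2,F}=0$ on $(0,T)\times\Omega(\gamma',\tau)$ by \eqref{t5a}, with $v_{|(0,T)\times\gamma'}=\partial_\nu v_{|(0,T)\times\gamma'}=0$, so the same application of Theorem~\ref{t3} on the cylinders $(t_0-\tau,t_0+\tau)\times\Omega(\gamma',\tau)$, $t_0\in(\tau,T-\tau)$, gives $v(t_0,\cdot)=0$ on $\Omega(\gamma',\tau)\supset B$; this proves \eqref{l11b} (in fact on the larger set $(\tau,T-\tau)\times B$).

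The delicate point is the application of Theorem~\ref{t3} on the domain of influence $\Omega(\gamma',\tau)$ rather than on all of $\Omega$: the part of the boundary of $\Omega(\gamma',\tau)$ lying inside $\Omega$, and the part of $\partial\Omega\cap\Omega(\gamma',\tau)$ outside $\gamma'$, carry no information on $v$, so one must use finite speed of propagation (Lemma~\ref{l1}) to localize the unique continuation to the region reachable from $\gamma'$ within time $\tau$, exactly as in the proof of Lemma~\ref{l8}. One must also keep track of the various time windows so that the cylinders $(t_0-\tau,t_0+\tau)$ stay inside $(0,T)$ and so that the duality identity only uses times at which $u_1^{\tilde f}=u_2^{\tilde f}$ is already available; this is where the hypotheses $T>2\diam(\Omega)$ and $\tau<(T-2\diam(\Omega))/4$ are used. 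Finally, the regularity required to justify the integration by parts ($u_j^{\tilde f}\in H^2(Q)$, and $v_{j,F}$ smooth in the interior with $\partial_\nu v_{j,F}\in L^2((0,T)\times\gamma')$) is available just as in Lemma~\ref{l8}.
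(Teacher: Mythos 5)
Your proof is correct and follows essentially the same route as the paper: vanishing Cauchy data on $(0,T)\times\gamma'$ plus \eqref{t5a} and the unique continuation of Theorem~\ref{t3} give \eqref{l11a}, and \eqref{l11b} is obtained by transposing against $v_{j,F}$ exactly as in the proof of \eqref{l8b} in Lemma~\ref{l8}. The paper states this very tersely, and your write-up simply fills in the details (the small-time identity, the sliding time windows, the duality computation) that the phrase ``mimicking the proof of Lemma~\ref{l8}'' leaves implicit.
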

\begin{proof} Note first that according to \eqref{t5a},  $u=u_1^f-u_2^f$ satisfies $\partial_t^2u-\Delta_x u+q_1u=0$ on $(0,T)\times\Omega(\gamma',\tau)$ and the condition $\Lambda_{q_1}=\Lambda_{q_2}$ implies $u_{|(0,T)\times\gamma'}=\partial_\nu u_{|(0,T)\times\gamma'}=0$. Thus, from the unique continuation property of Theorem \ref{t3} we deduce \eqref{l11a}. In view of \eqref{l11a}, we deduce \eqref{l11b} by mimicking the proof of statement \eqref{l8b} in Lemma \ref{l8} .\end{proof}

Armed with this lemma and  the arguments used for the global recovery in Section 4 we can complete the proof of Theorem \ref{tt1}.

\appendix
\section{Proofs of classical results on wave equations}

We begin by proving the reformulation of the finite speed of progation as stated in Section 2.1.

\begin{proof}[Proof of Theorem \ref{t2}]
Let $x_0\in\Omega\setminus  \Omega(S,\tau)$. Then we have dist$(x_0,S)=d>\tau$.  Fixing $\mathcal O:=\{x\in\R^n:\ \textrm{dist}(x,S)<\frac{d-\tau}{3}\}$,  $\widetilde{D}:=\{(t,x)\in [0,T]\times\overline{\Omega}:\ \textrm{dist}(x,\mathcal O)>t\}$ and applying Lemma \ref{l1}, we obtain $u_{|\widetilde{D}}=0$. For $t=\tau$ and any $x\in\{y\in\Omega:\ |y-x_0|<\frac{d-\tau}{3}\}$, we obtain that $$\textrm{dist}(x,\mathcal O)>\textrm{dist}(x_0,\mathcal O)- \frac{d-\tau}{3} \geq\textrm{dist}(x_0,S)- \frac{2(d-\tau)}{3}
=\tau+ \frac{d-\tau}{3}>\tau,$$
which implies that $(\tau,x)\in \widetilde{D}$. Thus, $u(\tau,\cdot)=0$ on a neighborhood of $x_0$ and $x_0\notin \supp \left[u(\tau,\cdot)\right]$. This completes the proof of Theorem~\ref{t2}.

\end{proof}

Let us now turn to the unique continuation result formulated in Section 2.2. 
Recall that Theorem \ref{t3} follows from a local  Holmgren-John unique continuation.  Consider a smooth surface
$\mathcal T:=\{(t,x)\in \R^{1+n}:\ \psi(t,x)=0\}$. We say that the differential operator $\p_t^2-\Delta_x+q$ is non-characteristic at a point $(t,x)\in\mathcal T$ if the outward unit normal vector $\textbf{n}=(n_0,n')$ with respect to $\p\{(t,x)\in \R^{1+n}:\ \psi(t,x)\leq 0\}$ at $(t,x)$, with $n_0\in\R$, $n'\in\R^n$, satisfies $n_0^2\neq |n'|^2$. For all $r>0$ and all $t\in\R$, $x\in\R^n$ we fix
$$B'(x,r)=\{y\in\R^n:\ |y-x|\leq r\},\quad B((t,x),r)=\{(s,y)\in\R^n:\ |(s-t,y-x)|\leq r\}.$$

\begin{Thm}\label{t4} Let $(t,x)\in\mathcal T$. Assume that there exists  $\delta>0$  such that  $q\in L^\infty(B'(x,\delta))$ and such that $\p_t^2-\Delta_x+q$ is non-characteristic at $\mathcal T\cap B((t,x),\delta)$. Then, if $u\in H^1(B((t,x),\delta))$ solves $(\p_t^2-\Delta_x+q)u=0$ in $B((t,x),\delta)$ and satisfies $\supp(u)\subset \{(s,y)\in  B((t,x),\delta):\ \psi(s,y)\leq0\}$ then $\supp(u)\cap\mathcal T=\emptyset$. \end{Thm}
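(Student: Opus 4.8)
The statement is, up to adapting the regularity of the coefficients to the present $L^\infty$ setting, a restatement of the local unique continuation theorem of Tataru \cite{T1}, and the plan is to reduce to it. The principal part $P_2 = \p_t^2 - \Delta_x$ has constant coefficients, hence in particular is real-analytic with respect to the time variable $t$; this is exactly the structural feature which, in Tataru's theorem, upgrades unique continuation across a \emph{pseudoconvex} surface to unique continuation across an arbitrary \emph{non-characteristic} one. The zeroth-order term $qu$ with $q \in L^\infty(B'(x,\delta))$ is an admissible lower-order perturbation, since the Carleman estimates underlying the theorem gain enough powers of the large parameter to absorb a bounded potential on the right-hand side.

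First I would translate the hypothesis into symbol form: with $p_2(\tau,\xi) = |\xi|^2 - \tau^2$ the principal symbol of $P_2$ and $\mathbf n = (n_0,n')$ the conormal of $\mathcal T$ at $(t,x)$, the condition $n_0^2 \neq |n'|^2$ reads $p_2(\mathbf n) \neq 0$. Being an open condition, after shrinking $\delta$ it persists on all of $\mathcal T \cap B((t,x),\delta)$, and in fact on all nearby level sets $\{\psi = c\}$, $|c|$ small. It is convenient to distinguish two cases for the conormal at a given point of $\mathcal T \cap B((t,x),\delta)$. If $n_0^2 > |n'|^2$ the surface is space-like there and the conclusion reduces to uniqueness for the local Cauchy problem for the wave equation, which follows from energy estimates alone. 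If $n_0^2 < |n'|^2$ the surface is time-like; this is the case where the non-analyticity of $q$ is a genuine issue, and where \cite{T1} is invoked: one picks a smooth defining function $\varphi$ of $\mathcal T$ near the point, with $\{\varphi<0\}$ on the same side as $\{\psi<0\}$, replaces $\varphi$ by $e^{\lambda\varphi}-1$ for $\lambda$ large, and uses the non-characteristic property to arrange the (sub)pseudoconvexity condition of Tataru's theorem relative to the splitting of the variables into the analytic variable $t$ and the non-analytic variables $x$; the resulting Carleman estimate forces any $H^1$ solution of $(\p_t^2-\Delta_x+q)u=0$ with $\supp(u) \subset \{\varphi\leq0\}$ near the point to vanish on a full neighborhood of it. Since every point of $\mathcal T \cap B((t,x),\delta)$ is interior to the ball, the argument (in whichever of the two cases applies) repeats there verbatim, yielding $\supp(u) \cap \mathcal T = \emptyset$.

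The only real obstacle is the verification of the pseudoconvexity condition demanded by Tataru's theorem, and this is precisely where the analyticity of $P_2$ in $t$ enters: it reduces that requirement to something already supplied by the non-characteristic property of $\mathcal T$ together with the exponential convexification. Since this verification is entirely classical for the wave operator, in the actual write-up I would present the proof as a direct application of \cite{T1}, limiting the discussion to checking that its hypotheses ($q\in L^\infty$ near the point, $\mathcal T$ non-characteristic) match ours and that the conclusion it provides is exactly $\supp(u) \cap \mathcal T = \emptyset$.
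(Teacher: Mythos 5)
Your proposal is correct and matches the paper's treatment: the paper does not prove Theorem~\ref{t4} either, but presents it as a special case of \cite[Theorem 1]{T1} (pointing also to \cite{RoZu} and \cite[Theorem 2.66]{KKL}), which is exactly the reduction you carry out, with your added discussion of the non-characteristic condition, the $L^\infty$ lower-order perturbation, and the convexification being standard commentary on why Tataru's hypotheses apply here.
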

This theorem is a special case of \cite[Theorem 1]{T1} (see also \cite{RoZu} for related results). We refer also to \cite[Theorem 2.66]{KKL} for a proof  without microlocal analysis.
In order to prove Theorem \ref{t3}, we fix $\Omega_1\supset \Omega$ and we consider two intermediate results.
\begin{lem}\label{l2} Let $x_0\in \R^n$, $\rho_0,\delta_0>0$ and $q\in L^\infty(B'(x_0,\rho_0+\delta_0))$. Assume that there exists $u\in H^1((-\delta_0,\delta_0)\times B'(x_0,\rho_0+\delta_0))$ solving $\p_t^2u-\Delta_x u+qu=0$ in  $(-\delta_0,\delta_0)\times B'(x_0,\rho_0+\delta_0)$ and satisfying
\bel{l2b} u(t,x)=0,\quad (t,x)\in[-\delta_0,\delta_0]\times B'(x_0,\rho_0).\ee
Then, we have
\bel{l2c} u(t,x)=0,\quad (t,x)\in K_{x_0,\delta_0}\ee
with
$$K_{x_0,\delta_0}=\{(t,x)\in[-\delta_0,\delta_0]\times B'(x_0,\rho_0+\delta_0):\ |x-x_0|\leq \delta_0-|t|\}.$$
\end{lem}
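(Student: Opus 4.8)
The statement is a local-to-semiglobal unique continuation result, and the natural strategy is to promote the local uniqueness of Theorem~\ref{t4} to the conical region $K_{x_0,\delta_0}$ by a connectedness argument along a one-parameter family of non-characteristic hypersurfaces. (We may assume $\rho_0<\delta_0$, otherwise $K_{x_0,\delta_0}\subset[-\delta_0,\delta_0]\times B'(x_0,\rho_0)$ and the conclusion is contained in the hypothesis.)

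First I would introduce the family of truncated cylinders
\[
C_\rho:=\left\{(t,y)\in\R^{1+n}:\ |y-x_0|\leq\rho,\ |t|\leq\delta_0-\rho+\rho_0\right\},\qquad \rho\in[\rho_0,\delta_0].
\]
Then $C_{\rho_0}=[-\delta_0,\delta_0]\times B'(x_0,\rho_0)$ is precisely the set on which $u$ is assumed to vanish; the $C_\rho$ increase with $\rho$; for $\rho>\rho_0$ each $C_\rho$ is contained in the open set $(-\delta_0,\delta_0)\times B'(x_0,\rho_0+\delta_0)$ on which $u$ solves the equation; and one checks that $\bigcup_{\rho<\delta_0}C_\rho$ contains the interior of $K_{x_0,\delta_0}$. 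Since that interior is dense in $K_{x_0,\delta_0}$ and $u\in H^1$, it suffices to prove that $u=0$ a.e.\ on $C_\rho$ for every $\rho<\delta_0$.

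The core step is the implication: \emph{if $u=0$ a.e.\ on $C_\rho$, then $u=0$ a.e.\ on $C_{\rho+h}$ for some $h>0$.} Here one uses that the lateral surface $\{|y-x_0|=\rho\}$ is timelike, hence $\partial_t^2-\Delta_x+q$ is non-characteristic on it: its conormal at a point $(t,y)$ is of the form $(0,n')$ with $|n'|=1$, so $n_0^2=0\neq1=|n'|^2$. Fixing $h>0$ and a point $P$ in the compact set $\Sigma_\rho^h:=\{|y-x_0|=\rho\}\cap\{|t|\leq\delta_0-\rho+\rho_0-h\}$, the function $u$ vanishes on the side $\{|y-x_0|<\rho\}$ near $P$ (that side lies, near $P$, inside $C_\rho$), so Theorem~\ref{t4} applied with $\psi(t,y)=\rho-|y-x_0|$ produces a ball centred at $P$ on which $u\equiv0$. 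Covering $\Sigma_\rho^h$ by finitely many such balls gives $u=0$ on $\{|y-x_0|<\rho+\eta\}\cap\{|t|\leq\delta_0-\rho+\rho_0-h\}$ for some $\eta>0$; choosing $h<\eta$, this set contains $C_{\rho+h}$, because $C_{\rho+h}$ has the strictly smaller time-height $\delta_0-(\rho+h)+\rho_0$. Then I would set $\rho^\ast:=\sup\{\rho\in[\rho_0,\delta_0]:\ u=0 \text{ a.e.\ on }C_\rho\}$: one has $\rho^\ast\geq\rho_0$, the supremum is attained since $C_{\rho^\ast}$ and $\bigcup_{\rho<\rho^\ast}C_\rho$ differ by a null set, and if $\rho^\ast<\delta_0$ the core step contradicts the definition of $\rho^\ast$; hence $\rho^\ast=\delta_0$ and we are done.

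The step I expect to be the main obstacle is the behaviour near the ``top'' of the cylinders, that is, near the characteristic part $\{|t|+|y-x_0|=\delta_0\}$ of $\partial K_{x_0,\delta_0}$. There the one-sided vanishing hypothesis of Theorem~\ref{t4} fails, so one cannot cross $\{|y-x_0|=\rho\}$ at points with $|t|=\delta_0-\rho+\rho_0$; this is exactly why the family is chosen so that increasing the radius by $h$ decreases the time-height by $h$, which buys the room to stay strictly below the top, and why one needs the balls furnished by Theorem~\ref{t4} to have radii bounded below uniformly on the relevant compact subsets of $\{|y-x_0|=\rho\}$ (which holds because there the surface is smooth, $q\in L^\infty$, and the distance to $\{|t|=\delta_0\}$ is bounded below). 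The remaining work is the routine verification of the two geometric inclusions used above.
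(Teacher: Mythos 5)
Your overall strategy---sweeping the target region by a one-parameter family of non-characteristic surfaces and invoking Theorem~\ref{t4} at a critical parameter---is the right one and is what the paper does, but your choice of family leaves a genuine gap in the core step. The implication ``$u=0$ on $C_\rho$ implies $u=0$ on $C_{\rho+h}$ for some $h>0$'' requires $h<\eta(h)$, where $\eta(h)$ is the thickness of the neighbourhood of $\Sigma_\rho^h$ obtained from the finite cover. But $\eta$ depends on $h$: at a point of $\Sigma_\rho^h$ at height $\delta_0-\rho+\rho_0-h$, the one-sided vanishing hypothesis of Theorem~\ref{t4} can only be verified in a ball of radius at most $h$ (a larger ball exits $C_\rho$ through its top face), and the theorem gives no lower bound on how far past the surface the vanishing then propagates. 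Compactness yields $\eta(h)>0$ for each fixed $h$, but nothing excludes $\eta(h)\leq h$ for every $h$; your assertion that the radii are ``bounded below uniformly'' is precisely the quantitative statement that the purely qualitative Theorem~\ref{t4} does not provide. Moreover the loss near the top is not an artifact of the method: the corners of your cylinders all lie on the characteristic cone $|t|+|y-x_0|=\delta_0+\rho_0$, which is the exact boundary of the sharp uniqueness region (already visible for $F(x-t)+G(x+t)$ in one space dimension), so near the corners the known-vanishing set is a wedge bounded in part by a characteristic surface and cannot be enlarged there at all. (A minor additional slip: the $C_\rho$ are \emph{not} increasing in $\rho$, since their time-height decreases; your null-set argument for the attainment of $\rho^\ast$ survives this, but the non-monotonicity is a symptom of the corner problem.)

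The paper avoids the issue by foliating the cone with closed surfaces that have no corners: it sets $K_s=\{(t,x):(t^2+s^2)^{1/2}+|x-x_0|\leq\delta_0\}$ for $s\in[0,\delta_0]$, so that $K_0=K_{x_0,\delta_0}$, $K_{\delta_0}=\{(0,x_0)\}$, and $\p K_s$ is non-characteristic everywhere except at the two poles $(\pm(\delta_0^2-s^2)^{1/2},x_0)$, which lie inside $[-\delta_0,\delta_0]\times B'(x_0,\rho_0)$ where \eqref{l2b} applies directly. Putting $s_0=\inf\{s:u|_{K_s}=0\}$ and supposing $s_0>0$, the union $\bigcup_{s>s_0}K_s=\{(t^2+s_0^2)^{1/2}+|x-x_0|<\delta_0\}$ is a full one-sided neighbourhood of \emph{every} point of $\p K_{s_0}$, so Theorem~\ref{t4} applies at every non-pole point of this compact surface; compactness then gives vanishing on an open neighbourhood of $K_{s_0}$, hence on some $K_{s_0-\epsilon}$, a contradiction. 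No quantitative bound is needed because the critical surface is handled as a whole rather than approached from strictly below its top. To repair your argument you would either need a quantitative local continuation estimate, or you should replace the cylinders by such a foliation of $K_{x_0,\delta_0}$ by closed non-characteristic surfaces.
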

\begin{proof}
We start by introducing the set
$$K_s=\{(t,x)\in [-\delta_0,\delta_0]\times\overline{\Omega_1}:\ (t^2+s^2)^{1/2}+|x-x_0|\leq\delta_0\},\quad s\in[0,\delta_0]$$
and we remark that $K_0=K_{x_0,\delta_0}$ and $K_{\delta_0}=\{(0,x_0)\}$. The surface $\p K_s$ are smooth with the exception of the end points $(t,x)=(\pm (\delta_0^2-s^2)^{1/2},x_0)$. In addition, for $s\in(0,\delta_0]$,  the points $\p K_s\setminus\{ (\pm (\delta_0^2-s^2)^{1/2},x_0)\}$ are non-characteristic with respect to $\p_t^2-\Delta_x+q$. 

Indeed, for every $(t,x)\in \p K_s\setminus\{ (\pm (\delta_0^2-s^2)^{1/2},x_0)\}$ the  normal derivative (modulo multiplication by $-1$) $\textbf{n}(t,x)$ of $\p K_s$ is given by
 $$\textbf{n}(t,x)=\left(\frac{t^2+s^2}{2t^2+s^2}\right)^\frac{1}{2}\left( \frac{t}{ (t^2+s^2)^\frac{1}{2}},\frac{x-x_0}{|x-x_0|}\right)$$
and we clearly have
$$\frac{t^2}{t^2+s^2}<1=\abs{\frac{x-x_0}{|x-x_0|}}^2,\quad s\in(0,\delta_0].$$
This proves that, for $s\in(0,\delta_0]$, $\p_t^2-\Delta_x+q$ is non-characteristic at any $(t,x) \in \p K_s\setminus\{ (\pm (\delta_0^2-s^2)^{1/2},x_0)\}$.
We fix
$$s_0=\inf\{s\in [0,\delta_0]:\ u_{|K_s}=0\}.$$
Let us show  ad absurdio  that $s_0=0$. For this purpose let us assume that $s_0>0$. According to \eqref{l2b}, we have $s_0<\delta_0$ and it is clear that for any  $(t,x)\in K_{x_0,\delta_0}$ such that\[ 
(t^2+{s_0}^2)^{1/2}+|x-x_0|<\delta_0
\] 
there exists $s\in(s_0,\delta_0]$ such that $(t,x)\in K_s$. Then, the definition of $s_0$ implies that $u=0$ on a neighborhood of $(t,x)$.  

According to the local unique continuation result of Theorem \ref{t4}, since $\p K_{s_0}\setminus\{ (\pm (\delta_0^2-s_0^2)^{1/2},x_0)\}$ is non-characteristic  and since $\supp(u)\cap\{(t,x)\in K_{x_0,\delta_0}:\ (t^2+{s_0}^2)^{1/2}+|x-x_0|<\delta_0\}=\emptyset$, we deduce that $u=0$ on a neighborhood of every point of $\p K_{s_0}\setminus \left(\R\times \{x_0\}\right)$. Moreover, if $(t,x)\in  \p K_{s_0}\cap \left(\R\times \{x_0\} \right)$ we have $(t,x)=(\pm (\delta_0^2-s^2)^{1/2},x_0)$ and \eqref{l2b} implies that $u=0$ on a neighborhood of $(t,x)$. Thus, there is an open neighborhood $V$ of $K_{s_0}$ such that $u_{|V}=0$. As $\p K_{s_0}$ is compact, there exists $\epsilon\in(0,s_0)$ such that $u_{|K_{s_0-\epsilon}}=0$. This contradicts the definition of $s_0$ and we deduce that $s_0=0$. This completes the proof of the lemma.

\end{proof}

From the previous result we can deduce the following.

\begin{lem}\label{l3} Let $\Omega_1$ be an open set of $\R^n$ and let $q\in L^\infty(\Omega_1)$. Consider $x_0\in \Omega_1$ and $\rho>0$, $\delta\in(0,\tau)$ such that
\bel{l3a} B'(x_0,\rho+\delta)\subset\Omega_1.\ee
 Let $u\in H^1((-\tau,\tau)\times \Omega_1)$ be a solution of $\p_t^2u-\Delta_xu+qu=0$
in $(-\tau,\tau)\times \Omega_1$ satisfying 
\bel{l3b} u(t,x)=0,\quad (t,x)\in[-\tau,\tau]\times B'(x_0,\rho).\ee
Then, we have
\bel{l3c} u(t,x)=0,\quad (t,x)\in K_{x_0,\tau}\ee
with
$$K_{x_0,\tau}=\{(t,x)\in[-\tau,\tau]\times B'(x_0,\rho+\delta):\ |x-x_0|\leq \tau-|t|\}.$$
\end{lem}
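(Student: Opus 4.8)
The plan is to deduce Lemma~\ref{l3} from Lemma~\ref{l2} by a covering/iteration argument along the time axis, exploiting the fact that Lemma~\ref{l2} is a \emph{local-in-time} statement (it only requires the solution on a slab of half-width $\delta_0$) while the cone $K_{x_0,\tau}$ we want to fill lives over the full time interval $[-\tau,\tau]$. The geometric point is that the union of the small cones $K_{x,\delta}$, as $x$ ranges over $B'(x_0,\rho)$, already covers a slab of the big cone, and then one slides the base ball outward.

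First I would fix $\delta$ as in hypothesis~\eqref{l3a}, so that $B'(x_0,\rho+\delta)\subset\Omega_1$, and note that for every $x_1\in B'(x_0,\rho)$ we have $B'(x_1,\delta)\subset B'(x_0,\rho+\delta)\subset\Omega_1$ and, by \eqref{l3b}, $u$ vanishes on $[-\tau,\tau]\times B'(x_1,0)$ — more precisely one applies Lemma~\ref{l2} with $\rho_0$ replaced by any $\rho'<\rho$ such that $B'(x_1,\rho'')\subset B'(x_0,\rho)$, so that the hypothesis \eqref{l2b} is satisfied on $[-\delta,\delta]\times B'(x_1,\rho'')$. Applying Lemma~\ref{l2} on the slab $(-\delta,\delta)$ gives that $u$ vanishes on each small cone $K_{x_1,\delta}$, and taking the union over $x_1$ one obtains $u=0$ on $\{(t,x):|t|\le\delta,\ \dist(x,B'(x_0,\rho))\le \delta-|t|\}$, i.e. on the truncation of $K_{x_0,\tau}$ to $|t|\le\delta$.

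Next I would iterate outward in the radial variable. The vanishing we have just obtained can be rephrased: on the time slab $|t|\le\delta$, $u$ vanishes on the enlarged ball $B'(x_0,\rho+\delta-|t|)$. I then re-apply Lemma~\ref{l2} (or rather Lemma~\ref{l3} with a smaller $\tau$, proceeding by induction on the number of steps $\lceil \tau/\delta\rceil$) with a \emph{time-shifted} centre: for $t_0\in[\delta,2\delta]$, one uses the slab $(t_0-\delta,t_0+\delta)$, on which $u$ is a solution, and the data now available on the spatial ball $B'(x_0,\rho)$ at times near $t_0$ (coming from the previous step, which in fact gives vanishing on a \emph{larger} ball, providing the needed room $\rho+\delta$). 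Each step advances the time by $\delta$ while, crucially, the available spatial ball does not shrink below $\rho$ because the characteristic cone opens at unit speed and we only move by $\delta$ in time; after finitely many steps one reaches $|t|=\tau$ and concludes $u=0$ on all of $K_{x_0,\tau}$. Symmetry in $t\mapsto-t$ handles the negative times.

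The main obstacle is bookkeeping the geometry so that at each iteration step the hypotheses of Lemma~\ref{l2} are genuinely met: one must keep a strictly positive ``radial reserve'' so that the ball on which $u$ is already known to vanish strictly contains $B'(x_0,\rho)$ with room $\delta$ to spare, while simultaneously advancing in time by $\delta$. This forces a careful choice of intermediate radii and of the number of slabs (roughly $\lceil\tau/\delta\rceil$ of them), and one should double-check that the truncated cones $K_{x,\delta}$ from adjacent slabs overlap enough to patch the vanishing set together — essentially a compactness argument on $[-\tau,\tau]$ combined with the observation that $\dist(\cdot,B'(x_0,\rho))$ is $1$-Lipschitz, so the propagated cone stays inside $B'(x_0,\rho+\delta)\subset\Omega_1$ throughout. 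Apart from this geometric accounting, the argument is a routine finite induction with Lemma~\ref{l2} as the engine.
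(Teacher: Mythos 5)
Your overall strategy --- time-translate and feed the translated solution into Lemma~\ref{l2}, exploiting that the cone produced by that lemma has central cross-section $B'(x_1,\delta_0)$, which is strictly larger than the input cylinder of radius $\rho_0$ --- is exactly the mechanism of the paper's proof, so the engine is right. However, two points in your bookkeeping would fail as written. First, the patching issue you flag is real: if the slabs are centred at the discrete times $k\delta$, then at an intermediate time $t=k\delta+\theta$ the two adjacent slabs only give vanishing on $B'(x_0,\rho+\max(\theta,\delta-\theta))$, which for $\theta=\delta/2$ is $B'(x_0,\rho+\delta/2)$ and never reaches the required radius $\rho+\delta$; no finite refinement of the grid fixes this. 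The cure is to drop the induction entirely: since \eqref{l3b} gives vanishing on $B'(x_0,\rho)$ at \emph{all} times $t\in[-\tau,\tau]$, nothing actually ``comes from the previous step'' --- you may centre a slab at every $t_0$ with $|t_0|\leq\tau-\delta$ independently and read off $u(t_0,\cdot)=0$ on the union $\bigcup_{x_1}B'(x_1,\delta)$, i.e.\ on (essentially) $B'(x_0,\rho+\delta)$, directly. (The paper instead keeps the centre at $x_0$ and sends $\delta_0\uparrow\rho+\delta$ with $\rho_0=\rho+\delta-\delta_0$, which achieves the same enlargement with a single centre.)

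Second, your argument never reaches the tips of the cone: for $\tau-\delta<|t_0|<\tau$ the slab $(t_0-\delta,t_0+\delta)$ is not contained in $(-\tau,\tau)$, so Lemma~\ref{l2} with $\delta_0=\delta$ cannot be applied there, and the portion of $K_{x_0,\tau}$ with $|t|>\tau-\delta$ is left uncovered by ``finitely many steps of length $\delta$''. One must shrink the half-width to $\delta_0=\tau-|t_0|$ (with $\rho_0=\min(\rho+\delta-\delta_0,\rho)$ so that hypothesis \eqref{l2b} still holds and the spatial ball stays inside $B'(x_0,\rho+\delta)$); since the required cross-section of $K_{x_0,\tau}$ at such times is only $B'(x_0,\tau-|t_0|)$, the shrunken cone still suffices. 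This is precisely the ``first step'' of the paper's proof, and without it the conclusion \eqref{l3c} is only obtained on $\{|t|\leq\tau-\delta\}$.
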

\begin{proof} Without loss of generality we assume that $\tau\geq\rho+\delta$. Then our goal is to show that for any $t\in[-\tau,\tau]$ and any $x\in B'(x_0,\rho+\delta)$ such that $|x-x_0|\leq \tau-|t|$ we have $u(t,x)=0$. We divide this proof into two steps.\\
\textbf{First step}: let $t\in(-\tau,\rho+\delta-\tau)\cup(\tau-\delta-\rho,\tau)$ and consider $v$ defined by
$$v(s,x):=u(t+s,x),\quad (s,x)\in[|t|-\tau,\tau-|t|]\times B'(x_0,\delta+\rho).$$
We have $v\in H^1((|t|-\tau,\tau-|t|)\times B'(x_0,\rho+\delta))$ and $v$ is a solution of $\p_s^2v-\Delta_x v+qv=0$ in  $(|t|-\tau,\tau-|t|)\times B'(x_0,\rho+\delta)$. Moreover, since $\tau-|t|\in[0,\rho+\delta)$, fixing $\delta_0=\tau-|t|$ and $\rho_0=\min(\rho+\delta+|t|-\tau,\rho)$, \eqref{l3b} implies that $v=0$ on $[-\delta_0,\delta_0]\times  B'(x_0,\rho_0)$. Therefore, applying Lemma \ref{l2} to $v$, we deduce that
$$v(s,x)=0,\quad (s,x)\in \{(s,x)\in [|t|-\tau,\tau-|t|]\times B'(x_0,\tau-|t|+\rho_0):\ |x-x_0|\leq \tau-|t|-|s|\}.$$
In particular we have
$$u(t,x)=v(0,x)=0,\quad x\in B'(x_0,\tau-|t|).$$
This shows that 
\bel{l3d}u(t,x)=0,\quad (t,x)\in K_{x_0,\tau}\cap [([-\tau,\rho+\delta-\tau)\cup(\tau-\delta-\rho,\tau])\times B'(x_0,\rho+\delta)].\ee
\textbf{Second step}: let $t\in[\rho+\delta-\tau,\tau-\delta-\rho]$ and consider $w$ defined by
$$w(s,x):=u(t+s,x),\quad (s,x)\in[-\rho-\delta,\rho+\delta]\times B'(x_0,\delta+\rho).$$
Then, for any $\delta_0\in[\delta,\rho+\delta)$ and $\rho_0=\rho+\delta-\delta_0$ applying again Lemma \ref{l2}, we deduce that
$$w(s,x)=0,\quad (s,x)\in \{(s,x)\in [-\delta_0,\delta_0]\times B'(x_0,\delta+\rho):\ |x-x_0|\leq \delta_0-|s|\}.$$
In particular, we have
$$u(t,x)=w(0,x)=0,\quad x\in B'(x_0,\delta_0).$$
Since $\delta_0\in [\delta,\rho+\delta)$ is arbitrary, we can send $\delta_0\to\rho+\delta$ and deduce that
$$u(t,x)=0,\quad x\in B'(x_0,\rho+\delta).$$
Therefore, we have
\bel{l3f}u(t,x)=0,\quad (t,x)\in K_{x_0,\tau}\cap ([\rho+\delta-\tau,\tau-\delta-\rho]\times B'(x_0,\rho+\delta)).\ee
Finally, combining \eqref{l3d}-\eqref{l3f}, we deduce \eqref{l3c}.

\end{proof} 

We are now in the position to complete the proof of Theorem \ref{t3}.

\begin{proof}[Proof of Theorem \ref{t3} under the assumption (H)]
Replacing $u(t,x)$ by $u(t+\tau,x)$, we can without loss of generality assume that $(\p_t^2-\Delta_x+q)u=0$ on $(-\tau,\tau)\times\Omega$ and $\p_\nu u=u=0$ on $(-\tau,\tau)\times S$. Then, fixing the cone
$$K_{S,\tau}:=\{(t,x)\in (-\tau,\tau)\times\Omega:\ \textrm{dist}(x,S)< \tau-|t|\}$$
the proof will be completed if we show that
\bel{t3c}u(t,x)=0,\quad (t,x)\in K_{S,\tau}.\ee
Here we use the fact that $\overline{K_{S,\tau}}\cap (\{0\}\times \overline{\Omega})=\{0\}\times\Omega(S,\tau)$.

Fix $(t_0,x_0)\in(-\tau,\tau)\times\Omega$ such that $\textrm{dist}(x_0,S)< \tau-|t_0|$. We consider $\epsilon_1>0$ arbitrary small. Then, according to condition (H), there exists $z_0\in S$ such that $|z_0-x_0|= 3\epsilon_1+ \dist(x_0,S)$, $[x_0,z_0]\subset(\Omega\cup S)$ and $\{y\in\Gamma:\ |y-z_0|<2\epsilon_1\}\subset S$. We define $\Omega_1=\Omega\cup \{x\in\R^n:\ |x-z_0|<2\epsilon_1\}$ and we clearly have $(\p\Omega\setminus \p\Omega_1)\subset S$. By eventually reducing the size of $\epsilon_1$, we can assume that the element $z\in\R^n$ given by
$$z=z_0+\epsilon_1 \frac{z_0-x_0}{|z_0-x_0|}$$
is lying in  $ \Omega_1\setminus \overline{\Omega}$ and  $[x_0,z]\subset\Omega_1$. Moreover, fixing $\epsilon=4\epsilon_1$, which can be arbitrary small since $\epsilon_1>0$ can be arbitrary small, 
we deduce that $|x_0-z|=\ell:=\textrm{dist}(x_0,S)+\epsilon$. We extend $u$ by $0$ to $(-\tau,\tau)\times\Omega_1$. Since  
$\p_\nu u=u=0$ on $(-\tau,\tau)\times S$,  we deduce that $u\in H^1((-\tau,\tau)\times \Omega_1)$. Therefore extending  $q$ to $\overline{\Omega_1}$ we deduce that $u$ solves $\p_t^2-\Delta_x u+qu=0$ in $(-\tau,\tau)\times \Omega_1$. Using the fact that $[x_0,z]\subset\Omega_1$, we consider the path
$$\mu(s)= \frac{sx_0+(|x_0-z|-s)z}{|x_0-z|} ,\quad s\in[0,|x_0-z|]$$
which is lying in $\Omega_1$. Since $\mu([0,\ell])$ is compact there exists $\delta>0$ such that dist$(\mu([0,\ell]),\p\Omega_1)>2\delta$ and dist$(\mu(0),\p\Omega)>2\delta$. Now let $N\in \mathbb N$ be such that $N\delta=\ell+\epsilon$. Here we can eventually reduce the size of $\delta$ in order to have $\frac{\ell+\epsilon}{\delta}\in\mathbb N$. Choose $s_j\in[0,\ell]$, $j=0,\ldots,N,$ such that $0<s_{j+1}-s_j<\delta$, $s_0=0$, $s_N=\ell$ and denote $y_j=\mu(s_j)$. We can now apply Lemma \ref{l3} to complete the proof of the theorem. Indeed, we can choose $\rho\in(0,\delta)$, which can be arbitrary small,  such that $u=0$ on $(-\tau,\tau)\times B'(y_0,\rho)$. Hence, Lemma \ref{l3} implies that $u=0$ in 
\[K_{y_0,\tau}=\{(t,x)\in(-\tau,\tau)\times B(y_0,\delta+\rho):\ |x-y_0|\leq\tau-|t|\}.\]
On the other hand, using the fact that $|y_1-y_0|<\delta$,  for all $(t,x)\in[-\tau+|y_1-y_0|+\rho, \tau-|y_1-y_0|-\rho]\times B'(y_1,\rho)$, we have
$$|x-y_0|\leq |x-y_1|+|y_1-y_0|<|y_1-y_0|+\rho< \min(\tau-|t|,\delta+\rho).$$
Therefore, using the fact that $s_1=|y_0-y_1|$, we find 
$$[-\tau+s_1+\rho, \tau-s_1-\rho]\times B'(y_1,\rho)= [-\tau+|y_1-y_0|+\rho, \tau-|y_1-y_0|-\rho]\times B'(y_1,\rho)\subset K_{y_0,\tau}$$
and
$$u(t,x)=0,\quad (t,x)\in [-\tau+s_1+\rho, \tau-s_1-\rho]\times B'(y_1,\rho).$$
Repeating this process and by eventually reducing the size of $\rho$,  we find
$$u(t,x)=0,\quad (t,x)\in [-\tau+s_j+j\rho, \tau-s_j-j\rho]\times B'(y_j,\rho),\quad j=0,\ldots,N.$$
Note that here we use the fact that
$$|y_{j+1}-y_{j}|+s_j=s_{j+1},\quad j=0,\ldots,N-1.$$
Using the fact that $s_N=\ell=\textrm{dist}(x_0,S)+\epsilon$, we get
$$[-\tau+s_N+N\rho, \tau-s_N-N\rho]\times B'(y_N,\rho)=[-\tau+\ell+N\rho, \tau-\ell-N\rho]\times B'(x_0,\rho).$$
Therefore, using the fact that dist$(x_0,S)<\tau-|t_0|$ and the fact that $\epsilon$ and $\rho$ are arbitrary and $N$ is independent of $\rho$, we can choose $\epsilon$ and $\rho$ in  such a way that $\tau-|t_0|-$dist$(x_0,S)> 2\epsilon+N\rho$. It follows that
$$[t_0-\epsilon, t_0+\epsilon]\times B'(x_0,\rho)\subset[-\tau+s_N+N\rho, \tau-s_N-N\rho]\times B'(y_N,\rho)$$
and we have
$$u(t,x)=0,\quad (t,x)\in [t_0-\epsilon, t_0+\epsilon]\times B'(x_0,\rho).$$
 This proves \eqref{t3c} and Theorem \ref{t3}.
 \end{proof}

The rest of the appendix concerns the proofs of the two approximate controllability results that we need.

\begin{proof}[Proof of Corollary \ref{c1}]
%To simplify the notation, we assume that $r = \tau$. The proof in the general case is essentially the same. 
In order to prove the density result we fix $h\in L^2(\Omega(\gamma',\tau))$, extended by zero to $h\in L^2(\Omega)$, such that 
\bel{t6b}\left\langle u_j^f(\tau,\cdot),h\right\rangle_{L^2(\Omega)}= \left\langle u_j^f(\tau,\cdot),h\right\rangle_{L^2(\Omega(\gamma',\tau))}=\int_{\Omega(\gamma',\tau)}u_j^f(\tau,\cdot)\overline{h}dx=0,\quad f\in \mathcal C^\infty_0((0,T)\times\gamma'),\ t\in[0,\tau]\ee
and we will prove that $h=0$. For this purpose, let $e_j\in\mathcal C([0,\tau];H^1(\Omega))\cap \mathcal C^1([0,\tau];L^2(\Omega))$ solves 
\begin{equation}\label{eq2}\left\{\begin{array}{ll}\partial_t^2e_j-\Delta_x e_j+q_j(x)e_j=0,\quad &\textrm{in}\ (0,\tau)\times\Omega,\\  e_j(\tau,\cdot)=0,\quad \partial_te_j(\tau,\cdot)=h,\quad &\textrm{in}\ \Omega,\\ e_j=0,\quad &\textrm{on}\ (0,\tau)\times\p\Omega.\end{array}\right.\end{equation}
In light of \cite[,Theorem 2.1]{LLT}, we have ${\p_\nu e_j}_{|(0,\tau)\times\p\Omega}\in L^2((0,\tau)\times\p\Omega)$.
Thus, for all $f\in\mathcal C^\infty_0((0,T)\times\gamma')$,  integrating by parts and applying \eqref{eq1} and \eqref{t6b}, we find
$$\begin{aligned}0
&=\int_0^\tau\int_\Omega u_j^f\overline{(\partial_t^2e_j-\Delta_x e_j+q_j(x)e_j)}dxdt\\
\ &=\int_\Omega u^f_j(\tau,\cdot)\overline{h}dx+\int_0^\tau \int_{\p\Omega} f(t,x)\overline{\p_\nu e_j}d\sigma(x)dt 
+\int_0^\tau\int_\Omega (\partial_t^2u_j^f-\Delta_xu^f_j+q_j(x)u_j^f)\overline{e_j}dxdt
\\
\ &=\int_0^\tau \int_{\gamma'} f(t,x)\overline{\p_\nu e_j}d\sigma(x)dt.\end{aligned}$$
Allowing $f\in\mathcal C^\infty_0((0,T)\times\gamma')$ be arbitrary we deduce that ${\p_\nu e_j}_{|(0,\tau)\times\gamma'}=0$. Now fixing $E_j$ defined by
$$E_j(t,x):=\left\{\begin{array}{ll} e_j(t,x)\quad &\textrm{for }t\leq\tau,\\ e_j(2\tau-t,x) &\textrm{for }t>\tau,\end{array}\right.$$
we deduce that $E_j\in H^1((0,2\tau)\times \Omega)$ satisfies
$$\left\{\begin{array}{ll}\partial_t^2E_j-\Delta_x E_j+q_j(x)E_j=0,\quad &\textrm{in}\ (0,2\tau)\times\Omega,\\ E_j=\p_\nu E_j=0,\quad &\textrm{on}\ (0,2\tau)\times\gamma'.\end{array}\right.$$
Thus, in view of Theorem \ref{t3}, we have
$h= \partial_t E_j(\tau,\cdot)_{|\Omega(\gamma',\tau)}=0$. This proves the density of \eqref{c1a}. 

\end{proof}

\begin{proof}[Proof of Lemma \ref{ll1}]
 In order to prove the density result we fix $h\in L^2(\Omega(B,r))$, extended by zero to $h\in L^2(\Omega)$, such that 
\bel{t6bb}\left\langle v_{j,F}(T,\cdot),h\right\rangle_{L^2(\Omega)}= \left\langle v_{j,F}(T,\cdot),h\right\rangle_{L^2(\Omega(B,r))}=\int_{\Omega(\gamma',\tau)}v_{j,F}(T,\cdot)\overline{h}dx=0,\quad F\in \mathcal C^\infty_0((T-r,T)\times B)\ee
and we will prove that $h=0$. For this purpose, let $e_j\in\mathcal C([0,\tau];H^1(\Omega))\cap \mathcal C^1([0,\tau];L^2(\Omega))$ solves 
\begin{equation}\label{eq2b}\left\{\begin{array}{ll}\partial_t^2e_j-\Delta_x e_j+q_j(x)e_j=0,\quad &\textrm{in}\ Q,\\  e_j(T,\cdot)=0,\quad \partial_te_j(T,\cdot)=h,\quad &\textrm{in}\ \Omega,\\ e_j=0,\quad &\textrm{on}\ (0,T)\times\p\Omega\end{array}\right.\end{equation}
Then, for all $F\in\mathcal C^\infty_0((T-r,T)\times B)$,  integrating by parts and applying \eqref{t6bb}, we find
$$\begin{aligned}0
&=\int_0^T\int_\Omega v_{j,F}\overline{(\partial_t^2e_j-\Delta_x e_j+q_j(x)e_j)}dxdt
\\
\ &=\int_\Omega v_{j,F}(T,\cdot)\overline{h}dx-\int_0^T \int_{\Omega} F(t,x) \overline{e_j}dxdt
+ \int_0^T\int_\Omega (\partial_t^2v_{j,F}-\Delta_xv_{j,F}+q_j(x)v_{j,F})\overline{e_j} dxdt
\\
\ &=-\int_0^T \int_{\Omega} F(t,x) \overline{e_j}dxdt.\end{aligned}$$
Allowing $F\in \mathcal C^\infty_0((T-r,T)\times B)$ be arbitrary we deduce that $ {e_j}_{|(T-r,T)\times B}=0$. Now fixing $E_j$ defined by
$$E_j(t,x):=\left\{\begin{array}{ll} e_j(t,x)\quad &\textrm{for }t\in[0,T),\\ e_j(2T-t,x) &\textrm{for }t\in[T,2T),\end{array}\right.$$
we deduce that $E_j\in H^1((0,2T)\times \Omega)$ satisfies
$$\left\{\begin{array}{ll}\partial_t^2E_j-\Delta_x E_j+q_j(x)E_j=0,\quad &\textrm{in}\ (0,2T)\times\Omega,\\ E_j=0,\quad &\textrm{on}\ (T-r,T+r)\times B.\end{array}\right.$$
Thus, in view of the proof of Theorem \ref{t3}, we have
$h=E_j(T,\cdot)_{|\Omega(B,r)}=0$. This proves the density of \eqref{ll1a}. 

\end{proof}

\end{document}